\documentclass[11pt]{amsart}

\usepackage[T1]{fontenc}
\usepackage{lmodern}
\usepackage{microtype}
\usepackage{amsmath,amssymb,amsthm,mathtools}
\usepackage{booktabs}
\usepackage{array}
\usepackage{enumitem}
\usepackage[colorlinks=true,linkcolor=blue,citecolor=blue,urlcolor=blue,
  pdftitle={Jacobi descent charts and logarithmic quotient coordinates for split symmetric spaces},
  pdfauthor={Jonathan Sparling}]{hyperref}
\usepackage[capitalise,noabbrev]{cleveref}
\usepackage{xcolor}
\usepackage{aliascnt}

\newtheorem{theorem}{Theorem}[section]
\newaliascnt{proposition}{theorem}
\newtheorem{proposition}[proposition]{Proposition}
\aliascntresetthe{proposition}
\newaliascnt{lemma}{theorem}
\newtheorem{lemma}[lemma]{Lemma}
\aliascntresetthe{lemma}
\newaliascnt{corollary}{theorem}
\newtheorem{corollary}[corollary]{Corollary}
\aliascntresetthe{corollary}
\newaliascnt{conjecture}{theorem}

\aliascntresetthe{conjecture}
\newaliascnt{question}{theorem}

\aliascntresetthe{question}
\theoremstyle{definition}
\newaliascnt{definition}{theorem}
\newtheorem{definition}[definition]{Definition}
\aliascntresetthe{definition}
\newaliascnt{example}{theorem}
\newtheorem{example}[example]{Example}
\aliascntresetthe{example}
\theoremstyle{remark}
\newaliascnt{remark}{theorem}
\newtheorem{remark}[remark]{Remark}
\aliascntresetthe{remark}
\newaliascnt{warning}{theorem}
\newtheorem{warning}[warning]{Warning}
\aliascntresetthe{warning}

\newcommand{\Fb}{\overline F}

\newcommand{\cQ}{\mathcal Q}
\newcommand{\cB}{\mathcal B}

\newcommand{\cS}{\mathcal S}
\newcommand{\A}{\mathbb A}
\newcommand{\Gm}{\mathbb G_m}
\newcommand{\Lie}{\operatorname{Lie}}
\newcommand{\Ad}{\operatorname{Ad}}
\newcommand{\ad}{\operatorname{ad}}
\newcommand{\diag}{\operatorname{diag}}
\newcommand{\rank}{\operatorname{rank}}

\newcommand{\GL}{\operatorname{GL}}
\newcommand{\SL}{\operatorname{SL}}
\newcommand{\Sp}{\operatorname{Sp}}

\newcommand{\Hom}{\operatorname{Hom}}

\newcommand{\dmeas}{\,d^{\times}}
\newcommand{\val}{\operatorname{val}}
\newcommand{\cC}{\mathfrak c}
\newcommand{\abs}[1]{\lvert #1\rvert}
\newcommand{\trans}{\mathsf t}
\newcommand{\g}{\mathfrak g}
\newcommand{\h}{\mathfrak h}
\newcommand{\p}{\mathfrak p}
\newcommand{\aA}{\mathfrak a}
\newcommand{\tT}{\mathfrak t}
\newcommand{\mM}{\mathfrak m}

\newcommand{\slie}{\mathfrak{sl}}
\newcommand{\glie}{\mathfrak{gl}}
\newcommand{\splie}{\mathfrak{sp}}
\newcommand{\solie}{\mathfrak{so}}

\title[Jacobi descent charts]
{Jacobi descent charts and logarithmic quotient coordinates\\
for split symmetric spaces}
\author{Jonathan Sparling}
\email{jon@liuva.ai}
\date{August 2026}

\begin{document}

\begin{abstract}
Let $F$ be a non-Archimedean local field of characteristic zero and residue characteristic different from $2$, and let $(G,H)$ be an equal-rank split symmetric pair with $G$ split semisimple and adjoint and with an $F$-split maximal $\theta$-split torus. Motivated by the singular contribution near the nilpotent fiber on the $\theta$-fixed side of the infinitesimal local relative trace formula, we construct finite descent charts on which families of regular elements in $\mathfrak{h}$ are conjugate, via Cartan lifts defined over $F$, to an everywhere-regular Jacobi family specializing to a fixed principal nilpotent element of $\mathfrak{g}$.

The construction begins with the sparse Jacobi family
\[
Q(q)=\sum_{\alpha\in\Delta}
\bigl(\gamma_\alpha n_{-\alpha}+q_\alpha n_\alpha\bigr)
\]
which is regular for every $q$, including the principal nilpotent value $Q(0)$. The induced map to the adjoint quotient has generic rank equal to the number of odd exponents of $G$. For split symmetric pairs this number is $\operatorname{rank}H$, so in equal rank the Jacobi family gives a generically finite \'etale quotient chart. Kummer theory classifies the twisted square-root covers on which the required Cartan conjugation is defined over $F$, and these covers exhaust the rational branches.

On every such branch, with square-root coordinates $q_\alpha=z_\alpha^2$, the quotient Jacobian cancels the relative Weyl discriminant exactly:
\[
\frac{ds}{\abs{D_H^G}^{1/2}}
=
C\prod_{\alpha\in\Delta}\dmeas z_\alpha
\]
Thus the normalized $H^\circ$-quotient density becomes a multiplicative Haar measure on the parameter torus, uniformly across all rational twists. Moreover, after a finite clopen refinement, the associated Iwasawa heights are piecewise affine functions of the valuations $\operatorname{val}(z_\alpha)$. For the symmetric pair $(\operatorname{Sp}_{2n},\operatorname{GL}_n)$, we realize the construction explicitly in Hurwitz coordinates and factor the Cartan lift through commuting long-root $\operatorname{SL}_2$-subgroups.
\end{abstract}

\maketitle
\tableofcontents

\section{Introduction}

Let $G$ be a split connected reductive group over a non-Archimedean local
field $F$ of characteristic zero and residue characteristic different from
$2$, let $\theta$ be an involution, let $H=G^\theta$, and write
\[
 \g=\h\oplus\p
\]
for the eigenspace decomposition.  The local problem studied here is to
construct rational regular families in $\h$ which approach the nilpotent
fiber in a controlled direction, while retaining enough information to track
both invariant measures and Iwasawa heights.  This problem is intrinsic to
the symmetric pair; its motivating occurrence is on the $\theta$-fixed side of
an infinitesimal local relative trace formula.

In that setting one starts from the identity
\begin{equation}\label{eq:intro-plancherel}
 \int_{H\backslash G}\phi(g)
     \int_{\p} f(g^{-1}Xg)\,dX\,dg
 =
 \int_{H\backslash G}\phi(g)
     \int_{\h}\widehat f(g^{-1}Xg)\,dX\,dg
\end{equation}
where $\phi=\phi_\mu$ is a truncation function on $H\backslash G$, introduced to
guarantee convergence, and $\widehat f$ denotes the Fourier transform with
respect to a fixed invariant pairing and compatible self-dual measures.

The left-hand side is the $\theta$-split side.  In
\cite{SparlingThetaSplit} it was expanded by Weyl integration on $\p$ into
weighted semisimple orbital integrals.  For a Levi subgroup $M$, the resulting
relative weight is eventually the lattice count
\begin{equation}\label{eq:theta-split-weight}
 \omega_M(g,\mu)
 =\#\left\{\nu\in X_*(A_M)^-\cap\operatorname{Im}\tau:
 \nu\in
 \operatorname{Hull}
 \{\mu_B-H_B(g)+H_{\bar B}(\theta(g))\}^{*}\right\}
\end{equation}
where
\[
 \tau(x)=\theta(x)^{-1}x
\]
A toric Euler--Poincar\'e argument gives a polynomial approximation
$J^-(f,\mu)$ to that side.  In particular, for
$\mu=\mu_1+d\mu_2$ in a sufficiently regular direction $\mu_2$,
\begin{equation}\label{eq:theta-split-limit}
 \lim_{d\to\infty}
 \bigl(\text{$\theta$-split side}-J^-(f,\mu)\bigr)=0
\end{equation}
No nilpotent correction of the kind studied below occurs in this
$\theta$-split expansion.

The right-hand side of \eqref{eq:intro-plancherel} is the $\theta$-fixed
side.  Its difficulty is different.  The most direct orbital expansion on
$\h$ may become singular near the origin, and in general one must perform a second
truncation procedure there.  The complete rank-one calculation for
$F^\times\backslash\SL_2(F)$ in \cite{SparlingRankOne} makes the
distinction manifest.  The $\theta$-split side is a sum of semisimple
weighted orbital integrals indexed by rational torus classes, whereas the
$\theta$-fixed side contains a weighted regular nilpotent integral of the
form
\[
 \int_F \widetilde f
 \begin{pmatrix}0&2n\\0&0\end{pmatrix}
 \bigl(1+\val(n)+\mu-\val(\epsilon)\bigr)\,dn
\]
together with an integral over the complement of a small neighborhood of the
origin in $\h$.  The polynomial weight in this nilpotent term is created by
the second truncation procedure.

The present paper develops the local geometric mechanism behind this
$\theta$-fixed calculation.  After the fixed-side expansion one is led to a
family of regular elements of $\h$ approaching the nilpotent fiber.  The
second truncation removes a small neighborhood of that fiber in the invariant
quotient.  To analyze its contribution, one seeks to pull the neighborhood
back to descent charts on finite covers of the regular locus, conjugate the
corresponding regular family toward a fixed regular nilpotent element of
$\g$, and then change the order of integration.  The eventual nilpotent
weight is therefore not an input: it is expected to be produced by
integrating the original truncation function over the quotient neighborhood.

The geometric problem addressed here is to construct these families over
$F$, identify their rational branches, and find toroidal parameters on finite
covers in which both the quotient measure and the resulting Iwasawa-height
variables can be computed.
On the square-root charts used below, the descended family in $\h$ collapses
linearly to zero, while its inverse conjugate extends to a principal nilpotent
element of $\g$.  The singular Cartan conjugation exposing this nilpotent limit
is also what introduces the additional valuation variables in the fixed-side
weight.

The existence of a polynomial trace formula was an important motivation for
this calculation.  In the rank-one formula, the polynomial approximations to
the two sides agree because their difference tends to zero along an
unbounded truncation ray.  In higher rank, the construction below isolates
the new dependence created by the nilpotent degeneration: it produces
explicit valuation variables for the conjugating family and retains the root
and Galois data carried by the degeneration.

Recent work makes this local problem particularly relevant.  Xue proved
relative Shalika-germ expansions for
$\GL_n\times\GL_n\backslash\GL_{2n}$ \cite{Xue}.  Li constructed local
trace formulas for infinitesimal Guo--Jacquet symmetric spaces and proved
representability and Fourier-transform identities for weighted orbital
integrals \cite{LiLocal,LiFourier}.  Chaudouard and Li reduce nonregular
geometric terms in a global relative trace formula, by descent to
centralizers, to nilpotent contributions of infinitesimal trace formulas
\cite{ChaudouardLi}.  Mahendraker develops a relative trace formula for
Galois periods \cite{Mahendraker}.  In the group case, Chen's
Arthur--Shalika germs provide a useful comparison for the homogeneity and
parabolic descent of weighted orbital integrals \cite{ChenWeighted}.  The
fixed-side operation considered here is different:
the weight attached to the nilpotent contribution is created by integrating a
small quotient neighborhood.  These developments also highlight a concrete local task:
construct rational regular families approaching the relevant nilpotent orbits
and coordinates in which that neighborhood integration can be carried out.

The main ingredients of the paper are as follows.
\begin{enumerate}[label=\textup{(\roman*)},leftmargin=2.5em]
\item For semisimple $G$, the extremal identity
      \[
       \dim\h=(\dim\g-\rank\g)/2
\]
      characterizes the Chevalley, or split, symmetric pairs.  The difference
      between the two sides of the corresponding inequality defines a nonnegative
      $\theta$-divergence defect.  Its vanishing is also the critical homogeneity
      condition for the leading regular nilpotent germ on the fixed side.
      Semisimple descendants remain split symmetric pairs, although they need not
      remain equal rank.
\item The principal-$\slie_2$ calculation gives the rank formula
      \[
       \rank d(\chi|_{\cQ})=\#\{i:m_i\text{ is odd}\}=\rank H
\]
      Thus the Jacobi family maps dominantly onto the fixed part of the
      adjoint quotient, with relative dimension $\rank G-\rank H$, and is
      generically \'etale when the ranks are equal.
\item A relation
      \[
       \theta(Q(q))=\Ad(a(q))Q(q)
\]
      turns the descent problem into a Cartan-map lifting problem for which
      norm conditions and Galois cohomology determine the rational branches.
      Square classes $\gamma_\alpha$ in certain root-$\SL_2$ factors occur here, 
      and a Kummer kernel indexes twisted square-root
      covers on which the Cartan lift is an $F$-rational morphism.
\item On the square-root chart $q_\alpha=z_\alpha^2$, the $\theta$-fixed Jacobi
      family is linear in the $z_\alpha$.  Splitting the root spaces by
      height parity gives a determinant $R(z)$ for which
      \[
       \abs{D_H^G(Y(z))}^{1/2}
       =C\abs{R(z)}\prod_\alpha\abs{z_\alpha}
\]
      while the Jacobian of the $H$-quotient map along the same family is
      $C'R(z)$.  Hence the relative quotient density for $H^\circ$ is
      $C\prod_\alpha d^\times z_\alpha$ for compatible measure normalizations;
      in fact $R(z)$ and the ambient
      Jacobi Jacobian are monomials.  A direct weight-space argument then makes
      the Cartan-lift Iwasawa heights, for each fixed orbital variable and after
      a finite clopen refinement, piecewise affine in $\val(z_\alpha)$.
\end{enumerate}

The first two points are geometric.  The rational descent in \textup{(iii)} is
formulated after choosing the $F$-split, $\theta$-split maximal torus used
below.  The logarithmic-density statement in \textup{(iv)}, and the
descent-chart theorem that packages it, require the equal-rank condition.
When the ranks differ, the Jacobi map has relative dimension
$\rank G-\rank H$, which agrees with the geometric dimension of the residual
centralizer torus $T_H\backslash T_G$ on the fixed side.  When that torus is
$F$-split, this number is also the rank of its valuation lattice.

The pair
\[
 (G,H)=(\Sp_{2n},\GL_n)
\]
usually called type CI, is treated as a worked example.  Hurwitz coordinates
make the relative density logarithmic, and the Cartan lift factors through
commuting long-root $\SL_2$ subgroups.  The example illustrates all parts of
the general construction.

The main results of the paper are local and stand independently of a complete
trace-formula expansion: the descent-chart theorem constructs the rational
families and their nilpotent limits, while the Jacobi determinant identity
and the weight-space argument identify the quotient densities and, pointwise
in the orbital variable, the Iwasawa-height behavior on those families.  Thus
the result here should be
viewed as geometric input for, rather than a proof of, the polynomial
fixed-side trace-formula expansion.  We do not carry out the second
fixed-side truncation: a follow-up paper will specify suitable small neighborhoods
in the invariant quotient, describe their pullbacks to the Jacobi charts,
justify the required changes in order of integration, and evaluate the
resulting integrals of the truncation function as weights on nilpotent orbital
terms.

Section~2 isolates the fixed-side local problem and recalls the rank-one
model.  Sections~3 and 4 characterize $\theta$-divergence and study
semisimple descendants.  Sections~5 and 6 construct rank-one Cartan lifts and
organize their rational descent.  Section~7 develops the Jacobi family,
identifies the $\theta$-fixed part of the adjoint quotient, and proves generic
\'etaleness in equal rank.  Section~8 proves the height-parity determinant
identity, the logarithmic quotient-density formula, and the descent-chart
theorem.  Section~9 studies the resulting piecewise-affine Iwasawa-height
coordinates and explains how they enter the second truncation over small
quotient neighborhoods.  Section~10 gives the illustrative symplectic
calculation.

\section*{Acknowledgements}

The mathematical ideas and many of the calculations underlying this paper were developed during 2008--2011, beginning with the work recorded in \cite{SparlingThesis} and continuing during the author's postdoctoral years.  The author is grateful to Tasho Kaletha, Robert Kottwitz, Zhengyu Mao, Yiannis Sakellaridis, and Diana Shelstad for helpful discussions and encouragement during those years; this paper is, among other things, a belated continuation of those conversations.

AI tools were used during the final stages of writing for assistance with exposition, proofreading, and formatting; the author reviewed the final text and takes responsibility for its contents.

\section{\texorpdfstring{The $\theta$-fixed local problem}{The theta-fixed local problem}}
\label{sec:weighted-origin}

\subsection{The two infinitesimal sides}
For context, the $\theta$-split expansion established in
\cite{SparlingRankOne,SparlingThetaSplit} has the formal shape
\begin{equation}\label{eq:theta-split-formal}
 J^-(f,\mu)
 =\sum_M\sum_{T\in\mathcal T_M}c(M,T)
   \int_{\tT_{\mathrm{reg}}}
   \abs{D^G(X)}^{1/2}
   \int_{(A_MZ_H(T))\backslash G}
       f(g^{-1}Xg)\,\omega_M(g,\mu)\,dg\,dX
\end{equation}
where $\tT=\Lie(T)\subset\p$.  This is the source of
\eqref{eq:theta-split-weight} and of its toric polynomial approximation.
These results provide an essentially classical version of the truncation framework.

The $\theta$-fixed side is
\begin{equation}\label{eq:theta-fixed-side}
 J^+(\widehat f,\phi)
 =\int_{\h}\int_{H\backslash G}
   \widehat f(g^{-1}Xg)\phi(g)\,dg\,dX
\end{equation}
For each fixed $g$, Weyl integration in the $X$-variable is valid, but
using it under the remaining $g$-integration raises uniformity and
absolute-convergence issues near the nilpotent fiber.  A second truncation
isolates a small neighborhood of that fiber in the invariant quotient.  The
singular contribution from this neighborhood is then reorganized by local
coordinates and a change in the order of integration.  Ordinary germ theory
still describes the local nilpotent singularity and its homogeneity, but the
weight appearing in the final nilpotent term is produced by this neighborhood
integration.

\subsection{The rank-one fixed-side calculation}
\cite{SparlingRankOne} provides the explicit trace formula for $F^\times\backslash\SL_2(F)$. Choose
\[
 \h_\epsilon=\left\{
 \begin{pmatrix}x&0\\0&-x\end{pmatrix}:|x|\leq|\epsilon|
 \right\}
\]
The complement of $\h_\epsilon$ contributes a regular integral.  On
$\h_\epsilon$, the Iwasawa decomposition and a change in the order of
integration convert the singular part into an integral over the regular
nilpotent line.  For $\epsilon$ sufficiently small relative to the test
function, the polynomial approximation is
\begin{equation}\label{eq:rank-one-fixed-term}
 (1-|\varpi|)\int_F
 \widetilde f
 \begin{pmatrix}0&2n\\0&0\end{pmatrix}
 \bigl(1+\val(n)+\mu-\val(\epsilon)\bigr)\,dn
\end{equation}
The (second) factor in parentheses comes from integrating the original truncation function
over the portion of the adjoint quotient represented by $\h_\epsilon$.  Thus the new polynomial is obtained by
an explicit local calculation.  The same rank-one formula also retains square-class
parameters on the semisimple side; those parameters foreshadow the norm and
cohomological conditions in Sections~5 and 6.

\subsection{The local integral in higher rank}
The local fixed-side problem is modeled by integrals of the
following form; this is also the form produced by the trace-formula expansion motivating the construction.  Let $\chi_H:\h\to\h/\!/H$ be the invariant quotient.  On a rational
branch of its regular locus (or after the finite cover needed to obtain such a
branch), let $S$ be a regular section over a dense open subset $U$.  With compatible
measures one encounters
\begin{equation}\label{eq:weighted-weyl}
 \int_U\abs{D^H(S(s))}^{1/2}
   \int_{H\backslash G}\int_{Z_H(S(s))\backslash H}
   \widehat f(g^{-1}h^{-1}S(s)hg)
   \overline\omega(g,\mu)\,dh\,dg\,ds
\end{equation}
Here
$\overline\omega$ is the original truncation function, not the already
expanded weight $\omega_M$ from the $\theta$-split side.  Also, $ds$ is the
quotient measure in invariant coordinates on $\h/\!/H$; passing from a Cartan
to invariant coordinates absorbs one square root of the ordinary Weyl
discriminant.  For a reductive group $L$ and a regular semisimple element
$X\in\Lie(L)$ we use
\[
 D^L(X)=\det\bigl(\ad(X);\Lie(L)/\Lie(L)_X\bigr)
\]

Since $\overline\omega$ is a function on $H\backslash G$, integration in
stages combines the two inner quotients in \eqref{eq:weighted-weyl}:
\begin{equation}\label{eq:fixed-combined-orbit}
 \int_{Z_H(X)\backslash H}\int_{H\backslash G}
   \widehat f(g^{-1}h^{-1}Xhg)\overline\omega(g,\mu)\,dg\,dh
 =\int_{Z_H(X)\backslash G}
   \widehat f(g^{-1}Xg)\overline\omega(g,\mu)\,dg
\end{equation}
Thus the fixed-side singularity is most naturally compared with an ambient
$G$-orbital integral.  One discrepancy is the difference between the
regular centralizers in $H$ and in $G$.

\subsection{The residual centralizer torus}
Let $X\in\h(F)$ be regular semisimple in both $\h$ and $\g$, and put
\[
 T_G=Z_G(X)^\circ\qquad
 T_H=Z_{H^\circ}(X)^\circ=(T_G\cap H^\circ)^\circ
\]
Finite component groups introduce no additional continuous variables, but can
alter finite orbit indexing and quotient normalizations.  We suppress these
effects in the continuous calculation and restore the relevant finite-cover
qualifications below.  The tori $T_G$ and $T_H$ have dimensions
$\rank G$ and $\rank H$, respectively, and
\begin{equation}\label{eq:residual-torus-dimension}
 \dim(T_H\backslash T_G)=\rank G-\rank H
\end{equation}
With compatible quotient measures, integration in stages gives
\begin{equation}\label{eq:centralizer-disintegration}
 \int_{T_H(F)\backslash G(F)}\Psi(g)\,dg
 =\int_{T_G(F)\backslash G(F)}
   \int_{T_H(F)\backslash T_G(F)}\Psi(tg)\,dt\,dg
\end{equation}
For
\[
 \Psi(g)=\widehat f(g^{-1}Xg)\,\overline\omega(g,\mu)
\]
the orbital factor is unchanged by $t\in T_G(F)$.  Hence the inner integral
is the residual weight
\begin{equation}\label{eq:residual-arthur-weight}
 \omega_X^{\mathrm{cen}}(g,\mu)
 =\int_{T_H(F)\backslash T_G(F)}
   \overline\omega(tg,\mu)\,dt
\end{equation}
Define the ambient-normalized weighted orbital integral
\[
 J_X^G(\widehat f,\omega_X^{\mathrm{cen}})
 =\abs{D^G(X)}^{1/2}
   \int_{T_G(F)\backslash G(F)}
      \widehat f(g^{-1}Xg)\omega_X^{\mathrm{cen}}(g,\mu)\,dg
\]
Then \eqref{eq:weighted-weyl} takes the normalized form
\begin{equation}\label{eq:weighted-weyl-normalized}
 \int_U
 \frac{ds}{\abs{D_H^G(S(s))}^{1/2}}
 J_{S(s)}^G(\widehat f,\omega_{S(s)}^{\mathrm{cen}})
 \qquad
 D_H^G(X)=\frac{D^G(X)}{D^H(X)}
\end{equation}
This is the source of the quotient density
$ds/\abs{D_H^G}^{1/2}$ studied below.

\begin{remark}\label{rem:equal-rank-reduction} (Reduction to the equal rank case)
Let $C_X=T_H\backslash T_G$.  Its unbounded directions are measured by
the valuation homomorphism
\[
 \nu_{C_X}:C_X(F)\longrightarrow
 \Hom\bigl(X_F^*(C_X),\mathbb Z\bigr)
 \qquad
 \langle\chi,\nu_{C_X}(c)\rangle=-\val(\chi(c))
\]
Here $X_F^*(C_X)$ denotes the group of $F$-rational characters of $C_X$.
The kernel of $\nu_{C_X}$ is compact and its image has finite index; the
target has rank $\rank_F C_X$.  Moreover, the image of $T_G(F)$ in $C_X(F)$ has finite index,
since the cokernel injects into the finite pointed set $H^1(F,T_H)$.  Thus,
up to finitely many cosets, \eqref{eq:residual-arthur-weight} is an integral
over a lattice of rank $\rank_F C_X$ \cite{Serre}.  After fixing a suitable truncation
chamber, this is the usual source of Arthur-like orbital integral weights.  When $C_X$ is $F$-split, this rank is
$\rank G-\rank H$.  This suggests separating an Arthur-like integration along
$T_H\backslash T_G$ from the equal-rank nilpotent Jacobi analysis studied here;
no canonical factorization of the symmetric pair is asserted.
\end{remark}

When $\rank H=\rank G$, the connected tori $T_H\subset T_G$ have the same
dimension and hence are equal.  The continuous residual integration therefore
disappears, and \eqref{eq:weighted-weyl-normalized} is directly a weighted
ambient $G$-orbital integral.

In this equal-rank case the continuous residual-centralizer integration has
disappeared, so the remaining singular issue is concentrated in the quotient
variables.  A second truncation procedure cuts out a sufficiently small neighborhood
of the nilpotent point in $\h/\!/H$.  On a rational branch we will study a family
$u(s)$ such that
\[
 Q(s):=\Ad(u(s)^{-1})S(s)
\]
extends to a regular nilpotent element of $\g$.  Pulling the excised
neighborhood to such a branch and conjugating by $u(s)$ transfers the
singularity to this single controlled degeneration, while the original
truncation function is shifted by $u(s)$.  The intended analytic step is then
to change the order of integration: integration in the quotient variables
produces the weight multiplying the resulting nilpotent orbital term.  The
calculation in this paper identifies the rational branches of $u$, the
quotient coordinates and density needed for that integration, and the
valuation variables on which the shifted truncation function depends.

\subsection{Motivation and geometric requirements}
The equality of polynomial approximations in the rank-one trace formula was
the motivation for expecting a polynomial answer in higher rank.  The
calculation will establish the following concrete properties.
\begin{enumerate}[label=\textup{(\alph*)},leftmargin=2.3em]
\item a sparse regular family $Q(q)$ extends to a principal nilpotent element;
\item $\tau(u(q))$ lies in a fixed $\theta$-split torus and
      $\Ad(u(q))Q(q)\in\h$;
\item the square classes and cohomological branches of $u(q)$ are explicit;
\item on the square-root Jacobi chart the $H^\circ$-relative quotient density
      is a constant multiple of product multiplicative Haar measure;
\item for each fixed orbital variable, after a finite clopen refinement, the
      Iwasawa-height functions entering the fixed-side truncation are affine on
      polyhedral chambers in the valuations of the square-root Jacobi
      coordinates.
\end{enumerate}
The first three conditions control the nilpotent degeneration and its
rationality.  The last two put the excised quotient neighborhood and the
shifted truncation function into coordinates suited to the integrations which
produce the polynomial weight.

\section{\texorpdfstring{$\theta$-divergence and split symmetric pairs}{theta-divergence and split symmetric pairs}}

\subsection{Basic notation}
Let $F$ be a field of characteristic zero.  Let $G$ be a
connected reductive group over $F$, let $\theta:G\to G$ be an involution, and
let $H=G^\theta$ be the fixed-point group.  Then $H$ is smooth; all Lie-algebra
statements below depend only on its identity component.  Unless a subscript
$F$ is displayed, ranks are absolute ranks (equivalently, ranks after extension
to $\Fb$).  Write
\[
 \g=\Lie(G)\qquad \h=\Lie(H)\qquad
 \g=\h\oplus\p
\]
where $\h$ and $\p$ are the $+1$ and $-1$ eigenspaces of $d\theta$.
The bracket satisfies
\[
 [\h,\h]\subset\h,\qquad [\h,\p]\subset\p
 \qquad [\p,\p]\subset\h
\]
For $X\in\g$ we put
\[
 \g_X=Z_\g(X)\qquad \h_X=\g_X\cap\h
 \qquad \p_X=\g_X\cap\p
\]

Choose a nondegenerate invariant symmetric bilinear form
$B$ on $\g$ that is $\theta$-invariant.  On the derived algebra take the
Killing form.  On the center, decompose into the $+1$ and $-1$ eigenspaces of
$\theta$, choose nondegenerate symmetric forms separately on the two
eigenspaces, and declare them orthogonal.  Combining these forms gives the
required $B$.  The subspaces $\h$ and $\p$ are orthogonal for $B$.

\subsection{The centralizer identity}
The following observation is the source of all dimension formulas in this
paper.

\begin{proposition}\label{prop:rank-identity}
For every $X\in\p$,
\begin{equation}\label{eq:rank-identity}
 \dim\h-\dim\h_X=\dim\p-\dim\p_X
\end{equation}
Equivalently,
\begin{equation}\label{eq:centralizer-difference}
 \dim\h-\dim\p=\dim\h_X-\dim\p_X
\end{equation}
\end{proposition}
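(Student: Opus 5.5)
The plan is to compare the two restrictions of $\ad(X)$ for $X\in\p$. Since $[\p,\h]\subset\p$ and $[\p,\p]\subset\h$, the operator $\ad(X)$ interchanges the two eigenspaces. We therefore consider
\[
 A=\ad(X)|_{\h}:\h\to\p,\qquad A'=\ad(X)|_{\p}:\p\to\h .
\]
By definition $\ker A=\h_X$ and $\ker A'=\p_X$. Rank--nullity then gives $\dim\h-\dim\h_X=\rank A$ and $\dim\p-\dim\p_X=\rank A'$. So \eqref{eq:rank-identity} is equivalent to the equality $\rank A=\rank A'$.

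To prove this equality we use the $\theta$-invariant nondegenerate invariant form $B$ fixed above. The key facts are that $\h\perp\p$ and that $B$ is nondegenerate, so its restrictions to $\h$ and to $\p$ are each nondegenerate. Invariance gives, for $Y\in\h$ and $Z\in\p$,
\[
 B([X,Y],Z)=-B(Y,[X,Z]).
\]
This says that $A'$ is $-1$ times the adjoint (transpose) of $A$ with respect to the pairings $B|_{\h}$ and $B|_{\p}$. Concretely, we identify $\p\cong\p^*$ and $\h\cong\h^*$ through $B$; under these identifications $A'=-A^{*}$. A linear map and its transpose have the same rank, so $\rank A=\rank A'$, which proves \eqref{eq:rank-identity}. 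The identity \eqref{eq:centralizer-difference} is then a rearrangement of terms.

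The only point needing care is the nondegeneracy of $B$ on each eigenspace separately. It follows from the orthogonality $\h\perp\p$: if $Y\in\h$ pairs to zero with all of $\h$, then it also pairs to zero with $\p$, hence with all of $\g$, so $Y=0$. The orthogonality itself comes from $\theta$-invariance: for $Y\in\h$ and $Z\in\p$ we have $B(Y,Z)=B(\theta Y,\theta Z)=-B(Y,Z)$, and characteristic zero lets us conclude $B(Y,Z)=0$. The central part of $\g$ is handled by the explicit construction of $B$ given above, so no further work is needed there. The argument never uses that $X$ is semisimple, so it holds for every $X\in\p$, as the statement requires.
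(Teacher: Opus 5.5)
Your proof is correct and follows the paper's argument: $\ad X$ swaps $\h$ and $\p$, invariance of $B$ makes the two restrictions transposes up to sign, so they have equal rank, and their kernels are $\h_X$ and $\p_X$. Your added check that $B$ is nondegenerate on $\h$ and on $\p$ separately (via $\theta$-invariance and $\h\perp\p$) supplies a detail the paper leaves implicit, and it is correct.
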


\begin{proof}
Because $X\in\p$, the map $\ad X$ sends $\h$ to $\p$ and $\p$ to $\h$.
For $Y\in\h$ and $Z\in\p$, invariance of $B$ gives
\[
 B([X,Y],Z)=-B(Y,[X,Z])
\]
Thus the maps
\[
 \ad X:\h\longrightarrow\p
 \qquad
 \ad X:\p\longrightarrow\h
\]
are transposes up to sign.  They have the same rank.  Their kernels are
$\h_X$ and $\p_X$, respectively, which proves \eqref{eq:rank-identity}.
\end{proof}

\begin{remark}
For semisimple $X$, one may instead use the induced isomorphism on
$\g/\g_X$.  The bilinear-form proof shows that semisimplicity is unnecessary.
\end{remark}

A \emph{Cartan subspace} is a maximal abelian subspace of $\p$ consisting of
semisimple elements.  Fix a Cartan subspace $\aA\subset\p$, and write
\[
 \qquad r_\theta=\dim\aA
 \qquad \mM=Z_\h(\aA)
\]
Call $X\in\aA$ \emph{relatively regular} if its centralizer in $\p$ has
minimal dimension, equivalently if $X$ lies off all restricted-root
hyperplanes.  For such an $X$, one has
\[
 \p_X=\aA
 \qquad \h_X=\mM
\]
Applying \cref{prop:rank-identity} gives the exact defect formula.

\begin{theorem}\label{thm:defect-formula}
Let $r=\rank\g$.  Then
\begin{equation}\label{eq:defect-formula}
 \dim\h-\frac{\dim\g-r}{2}
 =\frac{(r-r_\theta)+\dim\mM}{2}
\end{equation}
In particular,
\begin{equation}\label{eq:dimension-lower-bound}
 \dim\h\geq\frac{\dim\g-r}{2}
\end{equation}
\end{theorem}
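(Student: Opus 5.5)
The plan is to apply \cref{prop:rank-identity} to a single well-chosen element of $\p$, namely a relatively regular element of the fixed Cartan subspace $\aA$. Then I would combine the resulting equation with the trivial relation $\dim\h+\dim\p=\dim\g$.

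First I would check that a relatively regular $X\in\aA$ exists. Since $F$ has characteristic zero, it is infinite, so the finitely many restricted-root hyperplanes cannot cover $\aA$; one could also pass to $\Fb$, since all dimensions are invariant under base change. For such $X$, the paper records $\p_X=\aA$ and $\h_X=\mM$. Substituting into \eqref{eq:centralizer-difference} gives
\[
 \dim\h-\dim\p=\dim\mM-r_\theta .
\]
Adding $\dim\h+\dim\p=\dim\g$ yields $2\dim\h=\dim\g+\dim\mM-r_\theta$. Subtracting $\dim\g-r$ and dividing by $2$ then gives exactly \eqref{eq:defect-formula}.

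For the inequality \eqref{eq:dimension-lower-bound}, I need the right-hand side of \eqref{eq:defect-formula} to be nonnegative. Clearly $\dim\mM\ge 0$, so it remains to show $r_\theta\le r$. Here I would argue over $\Fb$. The subspace $\aA$ is a commuting family of semisimple elements, so its centralizer $\g_{\aA}$ is a reductive subalgebra of full rank $r$, and $\aA$ lies in its center. Any Cartan subalgebra $\mathfrak t$ of $\g_{\aA}$ contains that center, so $\aA\subset\mathfrak t$. Moreover $\mathfrak t$ is also a Cartan subalgebra of $\g$, so $r_\theta=\dim\aA\le\dim\mathfrak t=r$.

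I expect this last step to be the only place that needs care. The claim that a toral subalgebra lies in a Cartan subalgebra is standard, but I would cite it explicitly rather than reprove it. The rest of the argument is the linear bookkeeping above.
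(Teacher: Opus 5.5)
Your proposal is correct and is essentially the paper's proof: apply \cref{prop:rank-identity} at a relatively regular $X\in\aA$ (so $\p_X=\aA$ and $\h_X=\mM$), combine with $\dim\g=\dim\h+\dim\p$, and rearrange. Your additional justifications (a relatively regular element exists because $F$ is infinite, and $r_\theta\le r$ because the toral subalgebra $\aA$ lies in a Cartan subalgebra) make explicit what the paper compresses into ``both terms on the right are nonnegative.''
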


\begin{proof}
For relatively regular $X\in\aA$, \cref{prop:rank-identity} gives
\[
 \dim\h-\dim\p=\dim\mM-r_\theta
\]
Since $\dim\g=\dim\h+\dim\p$, we obtain
\[
 2\dim\h=\dim\g+\dim\mM-r_\theta
\]
Subtracting $\dim\g-r$ and dividing by $2$ gives
\eqref{eq:defect-formula}.  Both terms on the right are nonnegative.
\end{proof}

We call the nonnegative quantity
\begin{equation}\label{eq:theta-divergence-defect}
 \delta_\theta(G,H)
 :=\dim\h-\frac{\dim\g-\rank\g}{2}
\end{equation}
the \emph{$\theta$-divergence defect}.

\subsection{The extremal case}

\begin{definition}\label{def:split-symmetric}
Assume first that $G$ is semisimple.  We call $(G,H,\theta)$ a
\emph{split symmetric pair} if its $\theta$-divergence defect vanishes,
equivalently if
\begin{equation}\label{eq:extremal-case}
 \dim\h=\frac{\dim\g-\rank\g}{2}
\end{equation}
For reductive $G$, we apply the definition to the derived group.
\end{definition}

Under this convention, splitness of the derived pair does not by itself imply
that $\delta_\theta(G,H)=0$ for the full reductive Lie algebra.  Indeed, when
the derived pair is split,
\[
 \delta_\theta(G,H)=\dim\mathfrak z(\g)^\theta
\]
Thus the full extremal identity additionally requires $\theta$ to act by
$-1$ on the center; any $\theta$-fixed central directions factor off as
ordinary additive quotient coordinates.

\begin{theorem}\label{thm:split-symmetric-equivalences}
Assume that $G$ is semisimple.  The following are equivalent after extension
to an algebraic closure of $F$.
\begin{enumerate}[label=\textup{(\alph*)},leftmargin=2.3em]
\item $(G,H,\theta)$ is a split symmetric pair.
\item $r_\theta=\rank\g$.
\item $\p$ contains a Cartan subalgebra of $\g$.
\item There is a Cartan subalgebra $\tT\subset\g$ on which $\theta$ acts by
      $-1$.
\item The involution $\theta$ is conjugate to a Chevalley involution.
\end{enumerate}
When these conditions hold, $Z_\h(\aA)=0$ for every Cartan subspace
$\aA\subset\p$.
\end{theorem}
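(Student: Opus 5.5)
The plan is to work over $\Fb$ throughout and to close the cycle (a)$\Leftrightarrow$(b)$\Leftrightarrow$(c)$\Leftrightarrow$(d)$\Rightarrow$(e)$\Rightarrow$(d). Since $G$ is semisimple, $\dim\mathfrak z(\g)=0$. The defect formula \cref{thm:defect-formula} then reads
\[
 \delta_\theta(G,H)=\frac{(r-r_\theta)+\dim\mM}{2},
\]
with both summands nonnegative; here $r_\theta\le r$ because $\aA$ is an abelian subalgebra of semisimple elements and is therefore contained in a Cartan subalgebra. Hence (a) holds if and only if $r_\theta=r$ and $\mM=0$.

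For (a)$\Rightarrow$(b) this is immediate. For (b)$\Rightarrow$(a), suppose $r_\theta=r$, so that $\aA$ is a toral subalgebra of dimension $r$ and hence a Cartan subalgebra of $\g$. Then $Z_\g(\aA)=\aA\subset\p$, and so $\mM=Z_\h(\aA)=\aA\cap\h=0$. The defect therefore vanishes.

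The same argument gives (b)$\Rightarrow$(c), and it also gives the final claim that $Z_\h(\aA)=0$, because all Cartan subspaces are $H^\circ$-conjugate over $\Fb$ and so have the same dimension $r_\theta$. For (c)$\Rightarrow$(b), a Cartan subalgebra $\tT\subset\p$ is abelian and consists of semisimple elements, so it lies in some maximal such subspace. That subspace is a Cartan subspace of dimension at least $r$, and hence exactly $r$ by the bound above; conjugacy of Cartan subspaces then gives $r_\theta=r$. The equivalence (c)$\Leftrightarrow$(d) is a tautology, since $\theta$ acts by $-1$ on $\tT$ exactly when $\tT\subset\p$.

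The remaining step is (d)$\Leftrightarrow$(e), and I expect it to be the main obstacle, since it depends on what "Chevalley involution" means. I will take it to mean the involution $\theta_0$ attached to a pinning $(T,B,\{x_\alpha\})$ that acts by inversion on $T$ and sends $x_\alpha$ to $x_{-\alpha}$, up to the standard signs. For (e)$\Rightarrow$(d), conjugation transports $\Lie T$, on which $\theta_0$ acts by $-1$.

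For (d)$\Rightarrow$(e), let $T$ be the torus with $\Lie T=\tT$. Then $\theta$ preserves $T$ and acts on it by $t\mapsto t^{-1}$, so $\theta(\g_\alpha)=\g_{-\alpha}$ for every root $\alpha$. Fix a Chevalley involution $\theta_0$ relative to $T$. The composite $\theta\theta_0$ fixes $T$ pointwise and preserves each root space, so it is an automorphism acting trivially on $T$. By the standard description of such automorphisms, $\theta\theta_0=\Ad(t)$ for some $t\in T_{\mathrm{ad}}$. Choose $s\in T_{\mathrm{ad}}(\Fb)$ with $s^2=t$, which is possible because $\Fb$ is algebraically closed. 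Since $\theta_0(s)=s^{-1}$,
\[
 \Ad(s)\,\theta_0\,\Ad(s)^{-1}=\Ad(s)\Ad(\theta_0(s))^{-1}\theta_0=\Ad(s^2)\theta_0=\theta.
\]
This exhibits $\theta$ as conjugate to $\theta_0$, up to harmless adjoint-versus-simply-connected bookkeeping, which must be tracked carefully.
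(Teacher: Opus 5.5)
Your proof is correct and takes essentially the same route as the paper. The defect formula gives (a)$\Leftrightarrow$(b), a Cartan subspace of dimension $\rank\g$ is a Cartan subalgebra with zero centralizer in $\h$, and (c)$\Leftrightarrow$(d) is immediate; your conjugation by $\Ad(s)$ with $s^2=t$ is the group-level version of the paper's ``rescaling a Chevalley basis.'' You also make explicit the Kostant--Rallis conjugacy of Cartan subspaces, which the paper uses only implicitly in (c)$\Rightarrow$(b) and in the final claim.
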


\begin{proof}
By \cref{thm:defect-formula}, condition (a) is equivalent to
$r_\theta=\rank\g$ and $\mM=0$.  If $r_\theta=\rank\g$, then a Cartan subspace
$\aA$ has the dimension of a Cartan subalgebra and is therefore itself a
Cartan subalgebra.  Its centralizer in $\g$ is $\aA$, so its centralizer in
$\h$ is zero.  This proves the equivalence of (a)--(c), and (c) is visibly
equivalent to (d).

If $\theta$ acts as $-1$ on a Cartan subalgebra $\tT$, then it sends each root
space $\g_\alpha$ to $\g_{-\alpha}$.  After rescaling a Chevalley basis, this
is the defining action of a Chevalley involution.  Conversely, a Chevalley
involution acts as $-1$ on the chosen Cartan subalgebra.  This proves the final
equivalence.  See also \cite{KostantRallis,HelminckWang}.
\end{proof}

Thus \cref{thm:split-symmetric-equivalences} gives a numerical
characterization of the maximal-rank, or Chevalley-involution, condition over
an algebraic closure: the extremal dimension condition is exactly the
condition that $\p$ contain a Cartan subalgebra of $\g$.  The additional
rational requirement that the chosen maximal $\theta$-split torus be
$F$-split is imposed separately when needed.

\begin{remark}
The defect $\delta_\theta(G,H)$ remains meaningful outside the split case.
In the trace-formula application, its vanishing is the critical homogeneous
degree at which the fixed-side semisimple expansion acquires the singular
behavior studied below; the positive defect measures the distance from that
threshold.
\end{remark}

\begin{remark}\label{rem:homogeneity-divergence}
Assume in this remark that $G$ is semisimple.  The same equality is visible in the homogeneity of the leading regular
nilpotent germ on the $\theta$-fixed side.  Before the Weyl density is
inserted, the leading unnormalized germ has degree
$-(\dim\g-\rank\g)/2$ under $X\mapsto aX$; see
\cite[\S17.7]{KottwitzClay}.  If $s$ denotes homogeneous coordinates on
$\h/\!/H^\circ$, then $ds$ has degree $(\dim\h+\rank\h)/2$: the sum of the
degrees of a set of basic invariants is $\rank\h+\abs{\Phi_H^+}$.  Passing to
the finite quotient by $H/H^\circ$ does not change this generic radial degree.  Meanwhile
$\abs{D^H}^{1/2}$ has degree $(\dim\h-\rank\h)/2$.  Thus the Weyl density
$\abs{D^H}^{1/2}ds$ has degree $\dim\h$, and the resulting radial degree is
\[
 \dim\h-\frac{\dim\g-\rank\g}{2}
\]
The split case is therefore the critical homogeneous degree at which
logarithmic radial behavior can occur; obtaining an actual logarithmic term
also requires a nonzero leading germ and a compatible choice of neighborhood.
This is one analytic motivation
for \cref{def:split-symmetric,cor:split-centralizers}.
\end{remark}

\subsection{Classification over an algebraic closure}
Over an algebraic closure, a Chevalley involution is unique up to conjugacy
on each simple factor.  Its fixed root datum is obtained directly by pairing
opposite root spaces and taking the $+1$ eigenspaces.  

Involutions of complex simple Lie algebras and their fixed-point
subalgebras are tabulated in \cite[Table~7]{OnishchikVinberg}; see also
\cite{HelminckClassification} for the formulation in terms of semisimple
symmetric pairs and restricted root systems.  Restricting that classification
to the Chevalley involution gives \cref{tab:classification}.

\begin{table}[ht]
\centering
\renewcommand{\arraystretch}{1.12}
\begin{tabular}{@{}cclc@{}}
\toprule
Type of $\g$ & $\h=\g^\theta$ & $\rank\h$ & Equal rank?\\
\midrule
$A_{n-1}$ & $\solie_n$ & $\lfloor n/2\rfloor$ & only $A_1$\\
$B_n$ & $\solie_n\oplus\solie_{n+1}$ & $n$ & yes\\
$C_n$ & $\glie_n$ & $n$ & yes\\
$D_n$ & $\solie_n\oplus\solie_n$ & $2\lfloor n/2\rfloor$ & $n$ even\\
$E_6$ & $\splie_8$ & $4$ & no\\
$E_7$ & $\slie_8$ & $7$ & yes\\
$E_8$ & $\solie_{16}$ & $8$ & yes\\
$F_4$ & $\splie_6\oplus\slie_2$ & $4$ & yes\\
$G_2$ & $\slie_2\oplus\slie_2$ & $2$ & yes\\
\bottomrule
\end{tabular}
\caption{Split symmetric pairs and the equal-rank subfamily.}
\label{tab:classification}
\end{table}

For a semisimple algebra, $\theta$ preserves each simple factor in the
split case.  Indeed, if it exchanged two isomorphic factors, the
diagonal fixed subalgebra would violate \eqref{eq:extremal-case} on
that pair of factors.  Hence the semisimple classification is obtained by
taking products of the entries in \cref{tab:classification}.

The equal-rank column will be important for the explicit Jacobi construction.
For an irreducible root system it is equivalent to
\[
 -1\in W
\]
The excluded simple types are
\[
 A_n\;(n\geq2),\qquad D_{2m+1},\qquad E_6
\]

\section{Centralizers and semisimple descent}

Throughout this section $G$ is semisimple.  For a reductive group, the
statements apply to the derived symmetric pair, as in
\cref{def:split-symmetric}.  The centralizer identity becomes especially
rigid for a split symmetric pair.  Let $r=\rank\g$.

\begin{corollary}\label{cor:split-centralizers}
Assume that $(G,H,\theta)$ is a split symmetric pair.  Then for every $X\in\p$,
\begin{equation}\label{eq:split-centralizer-difference}
 \dim\p_X-\dim\h_X=r
\end{equation}
\end{corollary}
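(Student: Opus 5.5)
The plan is to combine the centralizer identity of \cref{prop:rank-identity} with the split-case dimension count. By \eqref{eq:centralizer-difference}, for every $X\in\p$,
\[
 \dim\p_X-\dim\h_X=\dim\p-\dim\h
\]
So the quantity in \eqref{eq:split-centralizer-difference} does not depend on $X$. It therefore suffices to show that $\dim\p-\dim\h=r$ for a split symmetric pair.

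For this, use \eqref{eq:extremal-case} from \cref{def:split-symmetric}, namely $2\dim\h=\dim\g-r$. Together with $\dim\g=\dim\h+\dim\p$ this gives
\[
 \dim\p-\dim\h=\dim\g-2\dim\h=r
\]
which proves the claim.

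As a consistency check, evaluate at $X=0$: there $\p_X=\p$ and $\h_X=\h$, so the identity reduces to the computation just made. One can also check a relatively regular $X$ in a Cartan subspace $\aA$. By \cref{thm:split-symmetric-equivalences} we have $\p_X=\aA$ with $\dim\aA=r$, and $\h_X=\mM=0$, which again yields $r$. This check uses the defect formula \eqref{eq:defect-formula} only implicitly.

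There is no real obstacle here. The only point needing care is that \cref{prop:rank-identity} was proved for arbitrary, not necessarily semisimple, $X\in\p$, using only the $\theta$-invariant form $B$. This is exactly what lets the identity hold for every $X\in\p$, nilpotent elements included. All dimensions are taken over $F$, and they are unchanged by base change to $\Fb$. So the splitness condition, which is checked geometrically, can be applied directly.
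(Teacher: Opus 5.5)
Your proposal is correct and follows the paper's proof: the defining equality \eqref{eq:extremal-case} gives $\dim\p-\dim\h=r$, and \cref{prop:rank-identity} transfers this to $\dim\p_X-\dim\h_X$ for every $X\in\p$. Your consistency checks are also fine. Note that the argument relies on the standing assumption of this section that $G$ is semisimple; for reductive $G$ with $\theta$-fixed central directions, $\dim\p-\dim\h$ would differ from $r$.
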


\begin{proof}
The defining equality gives $\dim\p-\dim\h=r$.  Apply
\cref{prop:rank-identity}.
\end{proof}

\subsection{Descendants}
If $X\in\p$ is semisimple, the identity component $G_X^\circ$ of its
centralizer is reductive and stable under $\theta$.  Its Lie algebra has the
eigenspace decomposition
\[
 \g_X=\h_X\oplus\p_X
\]
We call the connected symmetric pair determined by
$(G_X^\circ,\theta|_{G_X^\circ})$ the semisimple descendant at $X$; finite
component groups may be restored separately when orbital integrals are
formed.

\begin{theorem}\label{thm:descendants-split}
Every semisimple descendant of a split symmetric pair is again a split symmetric pair.
\end{theorem}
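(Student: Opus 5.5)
We verify the defect vanishing for the descendant $(G_X^\circ,\theta)$ at a semisimple $X\in\p$. Following \cref{def:split-symmetric}, this is checked on the derived pair, and the central $\theta$-fixed part is accounted for separately. The cleanest route is criterion (b)/(c) of \cref{thm:split-symmetric-equivalences}: we show that $\p_X$ contains a Cartan subalgebra of $\g_X$. Throughout we may extend scalars to $\Fb$, since all the conditions are geometric.

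The first step is to place $X$ in a Cartan subspace. Since $X\in\p$ is semisimple, it lies in some Cartan subspace $\aA\subset\p$: take a maximal abelian subspace of semisimple elements of $\p$ containing $X$, which is standard (Kostant--Rallis). Because $(G,H,\theta)$ is split, \cref{thm:split-symmetric-equivalences} gives $\dim\aA=\rank\g$. Hence $\aA$ is a Cartan subalgebra $\tT$ of $\g$ with $\tT\subset\p$.

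The second step is to pass to the centralizer. As $\aA$ is abelian and contains $X$, we have $\aA\subset\g_X$, and in fact $\aA\subset\p_X$. The algebra $\g_X$ is reductive of rank $\rank\g$, and $\aA$ is a Cartan subalgebra of $\g_X$, since it is a Cartan subalgebra of $\g$ contained in $\g_X$. Thus $\p_X$ contains a Cartan subalgebra of $\g_X$, and $\theta$ acts by $-1$ on it.

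The third step transfers this to the derived algebra. We write
\[
 \g_X=\mathfrak z(\g_X)\oplus[\g_X,\g_X].
\]
This decomposition is $\theta$-stable, and $\aA$ splits compatibly as
\[
 \aA=\mathfrak z(\g_X)\oplus\bigl(\aA\cap[\g_X,\g_X]\bigr).
\]
This holds because $\aA$ contains the center and is its own centralizer. Consequently $\aA\cap[\g_X,\g_X]$ is a Cartan subalgebra of the derived algebra lying in its $-1$-eigenspace. Criterion (d) of \cref{thm:split-symmetric-equivalences}, applied to the semisimple derived group of $G_X^\circ$, then gives splitness.

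As a by-product, $\theta$ acts by $-1$ on $\mathfrak z(\g_X)\subset\aA$. So even the full reductive defect $\delta_\theta$ vanishes for the descendant. One can also check this numerically: by \cref{cor:split-centralizers}, $\dim\p_X-\dim\h_X=r=\rank\g_X$, which is exactly the extremal identity \eqref{eq:extremal-case} for $\g_X$ with its full rank. This second argument is an alternative proof requiring no choice of $\aA$, provided one notes that the rank of $\g_X$ equals $r$.

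The main obstacle is the existence of a Cartan subspace containing an arbitrary semisimple $X\in\p$. This is where we invoke the standard fact that all maximal toral subspaces of $\p$ are $H^\circ$-conjugate and that semisimple elements of $\p$ lie in one. To avoid it, we can use the purely numerical route via \cref{cor:split-centralizers}. In that route, the only remaining point is that the defect of the reductive pair $\g_X$, computed via \cref{thm:defect-formula} applied to the reductive algebra $\g_X$ (whose proof is dimension-theoretic and applies verbatim), is $\ge$ that of its derived pair. Since the reductive defect vanishes, the derived defect also vanishes.
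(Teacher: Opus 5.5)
Your proof is correct, but your main route differs from the paper's in its key step.

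\textbf{The paper's route.} It also starts from a Cartan subspace $\aA_X\ni X$ that is a Cartan subalgebra of $\g$, but uses it only to get $\rank\g_X=r$. It then invokes \cref{cor:split-centralizers} to obtain $2\dim\h_X=\dim\g_X-\rank\g_X$. Finally it removes the connected center, which lies in $\aA_X$ and hence in $\p_X$, to pass to the derived pair.

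\textbf{Your main route.} You bypass the centralizer identity and apply criterion (d) of \cref{thm:split-symmetric-equivalences} directly to $[\g_X,\g_X]$, via the splitting $\aA=\mathfrak z(\g_X)\oplus(\aA\cap[\g_X,\g_X])$. This is structural rather than numerical. It is essentially the argument behind \cref{prop:root-centralizer-dimensions}, since it shows at once that $\theta$ restricts to the Chevalley involution of $\Phi_X$. Like the paper's proof, it needs the conjugacy of Cartan subspaces to know that the one containing $X$ has dimension $r$.

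\textbf{Your secondary numerical route.} This is the paper's count, with two genuine simplifications.
\begin{enumerate}
\item $\rank\g_X=r$ holds simply because $X$ lies in some Cartan subalgebra of $\g$.
\item The center is handled by the identity $\delta_\theta(\g_X)=\dim\mathfrak z(\g_X)^\theta+\delta_\theta([\g_X,\g_X])$. This derives the $\theta$-splitness of the center rather than using $\mathfrak z(\g_X)\subset\aA_X$.
\end{enumerate}
Together these avoid Cartan-subspace conjugacy altogether. One small point: from $\delta_\theta(\g_X)=0$ and $\delta_\theta(\g_X)\ge\delta_\theta([\g_X,\g_X])$ alone you cannot conclude vanishing. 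You also need $\delta_\theta([\g_X,\g_X])\ge0$, which is \cref{thm:defect-formula} applied to the semisimple derived algebra. The inequality itself comes directly from the displayed identity, not from the defect formula. With that supplied, the argument closes.
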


\begin{proof}
Choose a Cartan subspace $\aA_X\subset\p$ containing $X$.  Since the ambient
pair is split, \cref{thm:split-symmetric-equivalences} makes
$\aA_X$ a Cartan subalgebra of $\g$ on which $\theta$ acts by $-1$.  In
particular the semisimple centralizer contains a Cartan subalgebra of $\g$, so
\[
 \rank\g_X=\rank\g=r
\]
By \cref{cor:split-centralizers},
\[
 \dim\p_X-\dim\h_X=r
\]
Since $\dim\g_X=\dim\h_X+\dim\p_X$, it follows that
\[
 2\dim\h_X=\dim\g_X-r
 =\dim\g_X-\rank\g_X
\]

It remains only to match this with the reductive convention in
\cref{def:split-symmetric}.  The connected center of $G_X^\circ$ is contained
in the Cartan with Lie algebra $\aA_X$, so $\theta$ acts on it by inversion;
its Lie algebra therefore contributes only to $\p_X$.  Removing this $\theta$-split central torus from the preceding equality gives
\[
 2\dim [\g_X,\g_X]^\theta
 =\dim [\g_X,\g_X]-\rank [\g_X,\g_X]
\]
which is precisely the vanishing of the $\theta$-divergence defect for the
derived symmetric pair.
\end{proof}

This closure property is a key reason the class is suitable for a recursive
fixed-side analysis near nonregular strata.

\subsection{Root-subsystem formula}
Fix a Cartan subalgebra $\aA\subset\p$, and let
$\Phi=\Phi(\g,\aA)$ be the corresponding root system.  Since
$\theta$ acts as $-1$ on $\aA$, it exchanges $\g_\alpha$ and $\g_{-\alpha}$.
For $X\in\aA$, set
\[
 \Phi_X=\{\alpha\in\Phi:\alpha(X)=0\}
\]
Then
\[
 \g_X=\aA\oplus\bigoplus_{\alpha\in\Phi_X}\g_\alpha
\]
Choose a positive system $\Phi_X^+$.

\begin{proposition}\label{prop:root-centralizer-dimensions}
For $X\in\aA$,
\[
 \dim\h_X=\abs{\Phi_X^+}
 \qquad
 \dim\p_X=r+\abs{\Phi_X^+}
\]
The descendant root system is $\Phi_X$, and $\theta|_{\g_X}$ is its Chevalley
involution.
\end{proposition}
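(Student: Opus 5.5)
The plan is to compute $\h_X$ and $\p_X$ directly from the root decomposition of $\g_X$, pairing each root space with its negative. Since $\theta$ acts as $-1$ on $\aA$, we have $\aA\subset\p$. Since $\theta$ sends $\g_\alpha$ to $\g_{-\alpha}$ and $\Phi_X=-\Phi_X$, the decomposition
\[
 \g_X=\aA\oplus\bigoplus_{\alpha\in\Phi_X^+}\bigl(\g_\alpha\oplus\g_{-\alpha}\bigr)
\]
is $\theta$-stable summand by summand. It therefore suffices to compute the $\pm1$ eigenspaces on each two-dimensional block $\g_\alpha\oplus\g_{-\alpha}$, for $\alpha\in\Phi_X^+$.

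On such a block, choose $e_\alpha\in\g_\alpha$ and put $\theta(e_\alpha)=e'_{-\alpha}\in\g_{-\alpha}$, which is nonzero. Because $\theta^2=1$, the involution swaps the two lines, and its eigenvectors are $e_\alpha\pm\theta(e_\alpha)$, spanning one line each. Hence each block contributes exactly one dimension to $\h$ and one to $\p$. Summing over $\Phi_X^+$ and adding $\aA\subset\p$ gives $\dim\h_X=\abs{\Phi_X^+}$ and $\dim\p_X=r+\abs{\Phi_X^+}$. As a consistency check, this agrees with \cref{cor:split-centralizers}, since the difference is $r$.

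For the descendant statement, note that $\aA\subset\g_X$ is a Cartan subalgebra of the reductive algebra $\g_X$, and the roots of $(\g_X,\aA)$ are exactly those $\alpha\in\Phi$ with $\alpha(X)=0$, namely $\Phi_X$. The restriction $\theta|_{\g_X}$ acts by $-1$ on this Cartan subalgebra. By the argument in \cref{thm:split-symmetric-equivalences}, specifically (d)$\Rightarrow$(e) applied to $[\g_X,\g_X]$ with Cartan subalgebra $\aA\cap[\g_X,\g_X]$, it is a Chevalley involution of the descendant. On the center $\mathfrak z(\g_X)\subset\aA$ it also acts by $-1$, consistent with the proof of \cref{thm:descendants-split}.

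The main point needing care is the Chevalley-involution identification. A Chevalley involution is normally defined with respect to a Chevalley basis, $e_\alpha\mapsto -e_{-\alpha}$. I would handle this by noting that any involution acting as $-1$ on a Cartan subalgebra becomes $e_\alpha\mapsto c_\alpha e_{-\alpha}$ with $c_\alpha c_{-\alpha}=1$, and the $c_\alpha$ can be absorbed by rescaling the basis over $\Fb$. Alternatively, it is conjugate by an element of the torus to the standard one, since the cocycle $\alpha\mapsto c_\alpha$ restricted to simple roots can be normalized. The statement is geometric, so working over $\Fb$ suffices, exactly as in \cref{thm:split-symmetric-equivalences}.
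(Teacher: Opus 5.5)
Your proposal is correct and is essentially the paper's proof: each block $\g_\alpha\oplus\g_{-\alpha}$ with $\alpha\in\Phi_X^+$ is swapped by $\theta$, so it contributes one line to $\h$ and one to $\p$, while $\aA\subset\p$. The Chevalley-involution claim then follows because $\theta$ acts by $-1$ on the Cartan subalgebra $\aA$ of $\g_X$ and exchanges opposite root spaces; your extra normalization step fills in the rescaling argument from \cref{thm:split-symmetric-equivalences}, provided the constants $c_\alpha$ are absorbed through the torus (normalized on simple roots) rather than by independent per-root rescalings, which is exactly your second formulation.
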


\begin{proof}
For each pair $\{\alpha,-\alpha\}$ in $\Phi_X$, the direct sum
$\g_\alpha\oplus\g_{-\alpha}$ has one dimension in $\h$ and one in $\p$.
The Cartan $\aA$ lies entirely in $\p$.  Summing over positive roots gives the
formula.  The final assertion follows because $\theta$ acts by $-1$ on
$\aA$ and exchanges opposite root spaces.
\end{proof}

Thus the semisimple centralizer strata in $\mathfrak{a}$ are indexed by the
vanishing-root subsystems $\Phi_X$, and the same split symmetric-space
geometry recurs on each stratum.

\subsection{Regular nilpotent centralizers}
After extension to $\Fb$, Kostant--Rallis theory implies that a
split symmetric pair contains regular nilpotent elements in $\p$
\cite{KostantRallis}.  Their centralizers satisfy the extremal boundary case of
\cref{cor:split-centralizers}.

\begin{corollary}\label{cor:regular-nilpotent-centralizer}
Let $e\in\p$ be regular nilpotent in $\g$.  Then
\[
 \h_e=0
 \qquad
 \g_e=\p_e
 \qquad
 \dim\p_e=r
\]
\end{corollary}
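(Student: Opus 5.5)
Since $e$ is regular in $\g$, we have $\dim\g_e=r$; this is the standard characterization of regular elements, valid for nilpotent ones as well. The plan is to combine this with \cref{cor:split-centralizers}. Apply that corollary to $X=e\in\p$. Note that \cref{prop:rank-identity} does not require semisimplicity, so the corollary holds for every $X\in\p$, nilpotent included. It gives
\[
 \dim\p_e-\dim\h_e=r .
\]
Next, $\ad e$ commutes with $d\theta$ up to sign: $d\theta\circ\ad e=-\ad e\circ d\theta$, because $\theta(e)=-e$. Hence $\g_e$ is $d\theta$-stable and decomposes as
\[
 \g_e=\h_e\oplus\p_e ,
 \qquad
 \dim\h_e+\dim\p_e=\dim\g_e=r .
\]

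Adding and subtracting these two linear equations yields $\dim\p_e=r$ and $\dim\h_e=0$. Hence $\h_e=0$ and $\g_e=\p_e$, which is the full statement.

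The only point needing care is the input $\dim\g_e=r$. Over $\Fb$ this is Kostant's theorem that regular elements, meaning those with centralizer of minimal dimension $\rank\g$, include the principal nilpotents. It is also the definition of regularity. Since dimensions are insensitive to base change, it suffices to argue after extension to $\Fb$, as in the surrounding discussion. I do not expect a genuine obstacle. The one thing to check is that \cref{cor:split-centralizers} was stated for all $X\in\p$ and not just semisimple $X$, which the remark after \cref{prop:rank-identity} confirms.
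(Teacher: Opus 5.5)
Your proof is correct and is the paper's argument: combine $\dim\g_e=r$ with $\dim\p_e-\dim\h_e=r$ from \cref{cor:split-centralizers}, then add and subtract the two linear equations. Your explicit check that $\g_e=\h_e\oplus\p_e$ follows from $d\theta$-stability (since $\theta(e)=-e$) fills in a step the paper leaves implicit.
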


\begin{proof}
Regularity gives $\dim\g_e=r$.  On the other hand,
\[
 \dim\p_e-\dim\h_e=r
\]
by \cref{cor:split-centralizers}.  Adding and subtracting this equation from
$\dim\p_e+\dim\h_e=r$ gives the result.
\end{proof}

This records the limiting centralizer behavior inside the symmetric nilpotent
cone.  The endpoint of the family of Jacobi elements used below is a regular nilpotent element of $\g$
and need not itself lie in $\p$.

\section{Rank-one sections of the Cartan map}

\subsection{\texorpdfstring{The $\gamma$-twisted $\SL_2$ calculation}{The gamma-twisted SL2 calculation}}
Let $A_1\subset\SL_2$ be the diagonal torus.  Every involution that acts by
inversion on $A_1$ has, in a suitable $F$-basis, the form
\begin{equation}\label{eq:sl2-involution-gamma}
 \theta_\gamma:
 \begin{pmatrix}a&b\\c&d\end{pmatrix}
 \longmapsto
 \begin{pmatrix}d&c/\gamma\\ \gamma b&a\end{pmatrix}
 \qquad \gamma\in F^\times
\end{equation}
It is conjugation by
$\left(\begin{smallmatrix}0&1\\ \gamma&0\end{smallmatrix}\right)$.
The square class of $\gamma$ is intrinsic: rescaling a compatible pair of
root vectors changes $\gamma$ by a square.  The fixed torus is $F$-split
exactly when $\gamma$ is a square.

Let
\[
 E_\gamma=F[T]/(T^2-\gamma)
\]
be the associated quadratic \emph{\'etale} algebra; it is a field when
$\gamma$ is nonsquare and is isomorphic to $F\times F$ when $\gamma$ is a
square.  For $r,s\in F$ and $z\in F^\times$ with
\begin{equation}\label{eq:rank-one-norm}
 r^2-\gamma s^2=z
\end{equation}
define
\begin{equation}\label{eq:gamma-cayley}
 c_\gamma(r,s;z)=
 \begin{pmatrix}
 r&s/z\\
 \gamma s&r/z
 \end{pmatrix}
\end{equation}

\begin{lemma}[The rank-one norm section]\label{lem:sl2-section}
The matrix $c_\gamma(r,s;z)$ belongs to $\SL_2(F)$ and satisfies
\begin{equation}\label{eq:sl2-tau}
 \theta_\gamma(c_\gamma(r,s;z))^{-1}c_\gamma(r,s;z)
 =\begin{pmatrix}z&0\\0&z^{-1}\end{pmatrix}
\end{equation}
Consequently, a point $\diag(z,z^{-1})\in A_1(F)$ has a lift through the
rank-one Cartan map if and only if
\[
 z\in N_{E_\gamma/F}(E_\gamma^\times)
\]
\end{lemma}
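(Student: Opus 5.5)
The proof splits into a direct matrix computation and a norm argument.

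\emph{Step 1: determinant and the identity \eqref{eq:sl2-tau}.} The determinant of $c_\gamma(r,s;z)$ is $r\cdot r/z-(s/z)\gamma s=(r^2-\gamma s^2)/z=1$ by \eqref{eq:rank-one-norm}, so $c:=c_\gamma(r,s;z)\in\SL_2(F)$. Next, apply \eqref{eq:sl2-involution-gamma} to $c$ with $a=r$, $b=s/z$, $c_{21}=\gamma s$, $d=r/z$. This gives
\[
\theta_\gamma(c)=\begin{pmatrix} r/z & s\\ \gamma s/z & r\end{pmatrix},
\qquad
\theta_\gamma(c)^{-1}=\begin{pmatrix} r & -s\\ -\gamma s/z & r/z\end{pmatrix},
\]
where the inverse uses $\det\theta_\gamma(c)=1$. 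Multiplying $\theta_\gamma(c)^{-1}c$ entrywise:
\begin{itemize}
\item the $(1,1)$ entry is $r^2-\gamma s^2=z$;
\item the $(1,2)$ entry is $rs/z-rs/z=0$;
\item the $(2,1)$ entry is $-\gamma rs/z+\gamma rs/z=0$;
\item the $(2,2)$ entry is $(-\gamma s^2+r^2)/z^2=z^{-1}$.
\end{itemize}
This proves \eqref{eq:sl2-tau}.

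\emph{Step 2: sufficiency of the norm condition.} If $z=N_{E_\gamma/F}(r+sT)=r^2-\gamma s^2$, Step 1 exhibits $c_\gamma(r,s;z)$ as an explicit lift of $\diag(z,z^{-1})$ through the Cartan map $g\mapsto\theta_\gamma(g)^{-1}g$. This holds equally when $E_\gamma\cong F\times F$, where the norm map is surjective.

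\emph{Step 3: necessity.} This is the only step that needs an argument. Suppose $g=\begin{pmatrix}a&b\\c&d\end{pmatrix}\in\SL_2(F)$ satisfies $\theta_\gamma(g)^{-1}g=\diag(z,z^{-1})$. Equivalently, $g=\theta_\gamma(g)\diag(z,z^{-1})$. Using $\theta_\gamma(g)=\begin{pmatrix}d&c/\gamma\\ \gamma b&a\end{pmatrix}$, compare the first columns: $a=dz$ and $c=\gamma bz$. Then
\[
1=\det g=ad-bc=z(d^2-\gamma b^2),
\]
so $z^{-1}=N(d+bT)$ and hence $z\in N_{E_\gamma/F}(E_\gamma^\times)$, since $z=N(z\,(d+bT))$.

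The only care needed is keeping track of the inverse in the definition of the Cartan map and of the $\gamma$-placement in $\theta_\gamma$. Otherwise the proof is mechanical.
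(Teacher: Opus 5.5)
Your proposal is correct and follows essentially the same route as the paper: verify the determinant and the identity \eqref{eq:sl2-tau} by direct multiplication, then prove necessity by comparing entries of the relation $\theta_\gamma(g)^{-1}g=\diag(z,z^{-1})$. The only cosmetic difference is that the paper writes the relation as $\theta_\gamma(u)=u\,\diag(z^{-1},z)$ and substitutes $r=x$, $s=yz$ to get $r^2-\gamma s^2=z$ directly, which also shows every lift is of the form $c_\gamma(r,s;z)$; you instead obtain $z^{-1}$ as a norm and then invert.
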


\begin{proof}
Equation \eqref{eq:rank-one-norm} gives
\[
 \det c_\gamma(r,s;z)=\frac{r^2-\gamma s^2}{z}=1
\]
A direct multiplication using \eqref{eq:sl2-involution-gamma} gives
\eqref{eq:sl2-tau}.  Conversely, suppose
$u=\left(\begin{smallmatrix}x&y\\w&t\end{smallmatrix}\right)$ satisfies
$\theta_\gamma(u)^{-1}u=\diag(z,z^{-1})$.  Equivalently,
$\theta_\gamma(u)=u\diag(z^{-1},z)$, so
\[
 t=x/z,\qquad w=\gamma yz
\]
Putting $r=x$ and $s=yz$, the equation $\det u=1$ becomes
$r^2-\gamma s^2=z$.  Since
$r^2-\gamma s^2=N_{E_\gamma/F}(r+sT)$, the norm condition is
both necessary and sufficient.
\end{proof}

If $\gamma=\delta^2$ is a square, take
\[
 r=\frac{1+z}{2},\qquad
 s=\frac{1-z}{2\delta}
\]
When $\delta=1$, this gives the familiar rational Cayley section
\begin{equation}\label{eq:cayley-section}
 c(z)=
 \begin{pmatrix}
 \dfrac{1+z}{2}&\dfrac{z^{-1}-1}{2}\\[5pt]
 \dfrac{1-z}{2}&\dfrac{z^{-1}+1}{2}
 \end{pmatrix}
\end{equation}
Setting $\gamma=1$ is legitimate for a split rank-one
fixed torus, but not for a nonsplit rational form.  In the latter case the
norm condition is part of the lifting problem and naturally becomes part of the cohomological parametrization below.

\subsection{\texorpdfstring{Root $\SL_2$ subgroups and strongly orthogonal products}{Root SL2 subgroups and strongly orthogonal products}}
The preceding norm-section calculation will now be expressed in root-theoretic
terms.  This reformulation separates the rank-one construction from the
particular matrix realization and makes it available in higher rank.

Let $(G,H,\theta)$ again be a split symmetric pair with $G$ semisimple.  For the rational
root construction we fix an $F$-split maximal torus $A\subset G$ on which
$\theta(a)=a^{-1}$; equivalently, $\Lie(A)\subset\p$.  All statements in
Sections~3 and 4 were fundamentally geometric and did not require this additional rational
choice.  For a root $\beta$, choose an
$F$-rational root homomorphism
\[
 \iota_\beta:\SL_2\longrightarrow G
\]
compatible with a Chevalley basis.  There is a square class
$\gamma_\beta\in F^\times/(F^\times)^2$ such that
\[
 \theta\circ\iota_\beta
 =\iota_\beta\circ\theta_{\gamma_\beta}
\]
Choose a representative of this square class $\gamma_\beta$ and set
\[
 E_\beta=F[T]/(T^2-\gamma_\beta)
\]
For $w=r+sT\in E_\beta^\times$, put
$z=N_{E_\beta/F}(w)$ and define
\[
 u_\beta(w)=
 \iota_\beta\bigl(c_{\gamma_\beta}(r,s;z)\bigr)
\]
Then
\begin{equation}\label{eq:root-section-tau}
 \tau(u_\beta(w))=\beta^\vee(z)
\end{equation}
Thus the root-$\SL_2$ construction gives an explicit lift on the image of
the norm map $N_{E_\beta/F}$; when $\gamma_\beta$ is a square, it gives a
rational section on all of $\Gm$.

Let $\cB=\{\beta_1,\ldots,\beta_s\}$ be pairwise strongly orthogonal.
The corresponding root-$\SL_2$ subgroups commute.  Set
\[
 \mathcal R_\cB=\prod_{i=1}^s
       \operatorname{Res}_{E_{\beta_i}/F}\Gm
 \qquad
 \nu_\cB(w_1,\ldots,w_s)
 =\prod_{i=1}^s\beta_i^\vee
       \bigl(N_{E_{\beta_i}/F}(w_i)\bigr)
\]

\begin{theorem}\label{thm:orthogonal-section}
The map
\[
 \widetilde u_\cB(w_1,\ldots,w_s)
 =\prod_{i=1}^s u_{\beta_i}(w_i)
\]
is independent of the order of the factors and satisfies
\[
 \tau(\widetilde u_\cB(w))=\nu_\cB(w)
\]
If all $\gamma_{\beta_i}$ are squares, choosing their square roots reduces
this to a rational section on the split torus $\Gm^s$.
\end{theorem}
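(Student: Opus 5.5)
The plan is to reduce everything to the rank-one identity \eqref{eq:root-section-tau} together with two structural facts: strong orthogonality makes the root-$\SL_2$ subgroups commute, and $\tau$ is multiplicative on commuting $\theta$-stable factors.

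First I would establish commutation.
\begin{itemize}
\item For $i\neq j$, strong orthogonality of $\beta_i,\beta_j$ means that none of $\pm\beta_i\pm\beta_j$ is a root, and that $\beta_i\neq\pm\beta_j$.
\item Hence $[\g_{\pm\beta_i},\g_{\pm\beta_j}]=0$.
\item The coroot $\beta_i^\vee$ pairs trivially with $\beta_j$: orthogonality gives $\langle\beta_j,\beta_i^\vee\rangle=0$. So $\Lie(\beta_i^\vee(\Gm))$ commutes with $\g_{\pm\beta_j}$, and the two coroot tori commute because both lie in $A$.
\item Thus the Lie subalgebras $\Lie(\iota_{\beta_i}(\SL_2))$ and $\Lie(\iota_{\beta_j}(\SL_2))$ commute. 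Since $\SL_2$ is connected and we are in characteristic zero, the images $\iota_{\beta_i}(\SL_2)$ commute elementwise.
\end{itemize}
It follows that the product $\widetilde u_\cB(w)$ does not depend on the order of the factors.

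Next I would compute $\tau$. Write $u_i=u_{\beta_i}(w_i)$.
\begin{itemize}
\item The intertwining relation $\theta\circ\iota_\beta=\iota_\beta\circ\theta_{\gamma_\beta}$ shows that $\theta$ preserves each image $\iota_{\beta_i}(\SL_2)$. So the elements $\theta(u_i)$ lie in pairwise commuting subgroups, and they also commute with $u_k$ for $k\ne i$.
\item Using $\tau(x)=\theta(x)^{-1}x$ and $\theta(\prod u_i)=\prod\theta(u_i)$, we get
\[
 \tau\Bigl(\prod_i u_i\Bigr)=\Bigl(\prod_i\theta(u_i)\Bigr)^{-1}\prod_i u_i=\prod_i\theta(u_i)^{-1}u_i=\prod_i\tau(u_i).
\]
The middle step rearranges freely because all factors from distinct indices commute.
\item By \eqref{eq:root-section-tau}, $\tau(u_i)=\beta_i^\vee(N_{E_{\beta_i}/F}(w_i))$.
\end{itemize}
This gives $\tau(\widetilde u_\cB(w))=\nu_\cB(w)$.

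For the last sentence, suppose $\gamma_{\beta_i}=\delta_i^2$. Then $E_{\beta_i}\cong F\times F$ via $T\mapsto\delta_i$, and the norm becomes $(x,y)\mapsto xy$. I would use the square-class representative choice to take $\gamma_{\beta_i}=\delta_i^2$ and apply the explicit section $r=(1+z)/2$, $s=(1-z)/(2\delta_i)$ from the preceding subsection. This gives a morphism $z\mapsto u_{\beta_i}(z)$ defined over $F$ on all of $\Gm$, with $\tau(u_{\beta_i}(z))=\beta_i^\vee(z)$. Taking the product over $i$ yields a rational section on $\Gm^s$ of $(z_i)\mapsto\prod\beta_i^\vee(z_i)$.

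The main obstacle I expect is the commutation step: deducing group-level commutation from root data. I would handle it by the Lie-algebra argument above. An alternative is the Chevalley commutator formula for root subgroups, noting that $U_{\pm\beta_i}$ and $U_{\pm\beta_j}$ together generate the two $\SL_2$'s and commute because no positive combination $a\beta_i+b\beta_j$ is a root.
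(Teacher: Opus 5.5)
Your proposal is correct and follows the same route as the paper. Strong orthogonality makes the $\theta$-stable root-$\SL_2$ subgroups commute, so $\tau$ factors componentwise, and the rank-one identity \eqref{eq:root-section-tau} then yields $\nu_\cB$; you add the details the paper leaves implicit, namely the characteristic-zero passage from Lie-algebra to group commutation and the explicit section $r=(1+z)/2$, $s=(1-z)/(2\delta_i)$ in the square case.
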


\begin{proof}
Strong orthogonality implies that the root subgroups associated with
$\pm\beta_i$ commute with those associated with $\pm\beta_j$.  The factors
and their $\theta$-images therefore commute, so the Cartan map factors
componentwise.  The assertion follows from
\eqref{eq:root-section-tau}.
\end{proof}

\begin{warning}\label{warn:not-descend}
The strongly orthogonal product is an explicit partial construction, not the
general construction of rational Cartan-lift branches used later.  Neither
the norm maps nor the isogeny generated by the strongly orthogonal coroots
should be inverted carelessly on $F$-points.  Even for a full-rank strongly
orthogonal system, the product of rank-one sections directly reaches only the
image of the relevant norm maps followed by the coroot isogeny; this image
need not be all of $A(F)$.  The square classes $\gamma_\beta$ measure the
norm obstruction for this explicit rank-one construction, while the finite
kernel of the coroot isogeny gives a separate obstruction to the strongly
orthogonal construction.  Neither obstruction by itself characterizes the
image of the full Cartan map.  Moreover, choosing rank-one lifts pointwise
does not by itself produce a rational algebraic branch.  Section~6 therefore
constructs the general branches instead from twisted square-root covers and
the Kummer kernel.  The strongly orthogonal construction remains useful as an
explicit realization when its norm and isogeny obstructions vanish, and it
reappears concretely in the symplectic example of Section~10.
\end{warning}

\subsection{Full-rank systems}
A strongly orthogonal system of cardinality $\rank G$ exists exactly in the
equal-rank cases of \cref{tab:classification}; equivalently, $-1\in W$.
This follows from the classification of Weyl-group involutions and Kostant's
cascade of strongly orthogonal roots; see \cite{Carter,KostantCascade}.
In the classical cases one may choose:
\begin{align*}
 C_n:&\quad 2e_1,\ldots,2e_n\\
 B_{2m}:&\quad e_1-e_2,e_1+e_2,\ldots,
              e_{2m-1}-e_{2m},e_{2m-1}+e_{2m}\\
 B_{2m+1}:&\quad e_1-e_2,e_1+e_2,\ldots,
              e_{2m-1}-e_{2m},e_{2m-1}+e_{2m},e_{2m+1}\\
 D_{2m}:&\quad e_1-e_2,e_1+e_2,\ldots,
              e_{2m-1}-e_{2m},e_{2m-1}+e_{2m}
\end{align*}
For such a system, the coroots generate $A$ up to isogeny.  Over an
algebraic closure, and over $F$ whenever the relevant
$\gamma_{\beta_i}$ are squares, \cref{thm:orthogonal-section} gives an
explicit product of rank-one lifts.  In general the norm images and the
finite coroot isogeny must both be descended on $F$-points.  This is the
cohomological issue considered next.

\section{Cohomological descent}

\subsection{The common descent cocycle}
Let $A$ be a maximal $\theta$-split and $F$-split torus and put
\[
 K=H\cap A
\]
The fibers of the multiplication map $H\times A\to HA$ are precisely the
$K$-orbits for
\[
 k\cdot(h,a)=(hk,k^{-1}a)
\]
Both the rational-point decomposition of $HA$ and the lifting problem for the
Cartan map are therefore governed by the same elementary descent calculation.

\begin{lemma}[Descent in $HA$]\label{lem:HA-descent}
For $h\in H(\Fb)$ and $a\in A(\Fb)$, the product $ha$ is $F$-rational if and
only if, for every $\sigma\in\operatorname{Gal}(\Fb/F)$,
\begin{equation}\label{eq:HA-descent-cocycle}
 h^{-1}\sigma(h)=a\sigma(a)^{-1}\in K(\Fb)
\end{equation}
The common value is a $K$-valued cocycle.  Changing the factorization of
$ha$ changes this cocycle by a $K$-coboundary.
\end{lemma}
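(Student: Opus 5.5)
The plan is to prove all three assertions directly from the Galois action on $\Fb$-points, using only that $H$ and $A$ are defined over $F$, so that $\sigma$ preserves $H(\Fb)$, $A(\Fb)$ and hence $K(\Fb)=H(\Fb)\cap A(\Fb)$. We also use that $A$ is commutative, which matters for the cocycle identity.

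First, the equivalence. The product $ha$ is $F$-rational exactly when $\sigma(h)\sigma(a)=ha$ for every $\sigma$. Rearranging, this is the same as
\[
 h^{-1}\sigma(h)=a\sigma(a)^{-1}.
\]
The left side lies in $H(\Fb)$ and the right side lies in $A(\Fb)$. So whenever the equality holds, the common value lies in $K(\Fb)$. Conversely, if \eqref{eq:HA-descent-cocycle} holds, reversing the rearrangement gives $\sigma(ha)=ha$. This settles the first claim.

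Second, the cocycle property. Put $k_\sigma=h^{-1}\sigma(h)$. Then
\[
 k_{\sigma\rho}=h^{-1}\sigma(h)\,\sigma\bigl(h^{-1}\rho(h)\bigr)=k_\sigma\,\sigma(k_\rho).
\]
This is the standard nonabelian cocycle identity. It is automatically consistent with the $A$-side description because $A$ is commutative. Continuity holds because $h$ is defined over a finite extension, so $k$ factors through a finite quotient of the Galois group.

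Third, the coboundary statement. Suppose $ha=h'a'$ with $h,h'\in H(\Fb)$ and $a,a'\in A(\Fb)$. By the description of the fibers of $H\times A\to HA$, there is $k=h^{-1}h'=a(a')^{-1}\in K(\Fb)$ with $h'=hk$ and $a'=k^{-1}a$. Then
\[
 h'^{-1}\sigma(h')=k^{-1}\bigl(h^{-1}\sigma(h)\bigr)\sigma(k),
\]
which is precisely the $K$-coboundary twist of $k_\sigma$.

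None of these steps poses a real obstacle. The only point needing care is to note that $K$ is defined over $F$, being the intersection of two $F$-subgroups, and that it is commutative as a subgroup of $A$. That commutativity makes the twisted class independent of left versus right conventions.
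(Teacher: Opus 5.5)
Your proof is correct and follows the paper's argument. Like the paper, you rewrite $\sigma(ha)=ha$ as $h^{-1}\sigma(h)=a\sigma(a)^{-1}$ and note that the two sides lie in $H(\Fb)$ and $A(\Fb)$; the paper leaves the cocycle and change-of-factorization calculations as routine, and you carry them out explicitly (one small point: commutativity of $A$ is not actually needed for the cocycle identity, since $\sigma\mapsto h^{-1}\sigma(h)$ is a cocycle for any $h$).
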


\begin{proof}
The equality $\sigma(ha)=ha$ is equivalent to
\eqref{eq:HA-descent-cocycle}.  Its two sides lie in $H(\Fb)$ and $A(\Fb)$,
respectively, and hence in their intersection $K(\Fb)$.  The remaining
assertions are immediate from the usual change-of-factorization calculation.
\end{proof}

As a first consequence, the rational points of $HA$ need not be equal to
$H(F)A(F)$; the defect is exactly the cohomology class in the preceding
lemma.

\begin{proposition}\label{prop:double-cosets}
There is a natural bijection
\begin{equation}\label{eq:double-cosets}
 H(F)\backslash (HA)(F)/A(F)
 \simeq
 \ker\left(
 H^1(F,K)\longrightarrow H^1(F,H)\times H^1(F,A)
 \right)
\end{equation}
If $A$ is $F$-split, then $H^1(F,A)=1$ and this becomes
\begin{equation}\label{eq:double-cosets-split}
 H(F)\backslash (HA)(F)/A(F)
 \simeq
 \ker\left(H^1(F,K)\longrightarrow H^1(F,H)\right)
\end{equation}
\end{proposition}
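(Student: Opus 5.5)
The plan is to build the map from $(HA)(F)$ to $H^1(F,K)$ using \cref{lem:HA-descent}, and then check in turn that it is well defined on double cosets, lands in the kernel, hits the whole kernel, and is injective.

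\emph{Construction.} Let $x\in(HA)(F)$. Since $H\times A\to HA$ is surjective on $\Fb$-points, we may write $x=ha$ with $h\in H(\Fb)$ and $a\in A(\Fb)$. By \cref{lem:HA-descent}, $c_\sigma=h^{-1}\sigma(h)=a\sigma(a)^{-1}$ is a $K$-valued cocycle. Any other factorization has the form $(hk,k^{-1}a)$ with $k\in K(\Fb)$, because the fibers are $K$-orbits. This replaces $c$ by a cohomologous cocycle, so the class $[c]\in H^1(F,K)$ depends only on $x$. Replacing $x$ by $h_0xa_0$ with $h_0\in H(F)$ and $a_0\in A(F)$ replaces $(h,a)$ by $(h_0h,aa_0)$. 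Since $h_0$ and $a_0$ are Galois fixed, $c$ is unchanged. Hence we obtain a map on double cosets.

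\emph{Image in the kernel.} The image of $[c]$ in $H^1(F,H)$ is represented by $h^{-1}\sigma(h)$, which is a coboundary of $H$. The image in $H^1(F,A)$ is represented by $a\sigma(a)^{-1}$, and we need to see that it is trivial. Since $A$ is commutative, $a\sigma(a)^{-1}=\sigma(a^{-1})(a^{-1})^{-1}$, which is the coboundary of $a^{-1}$. So the class lies in the kernel.

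\emph{Surjectivity.} Suppose $c$ is a $K$-cocycle that becomes trivial in both $H^1(F,H)$ and $H^1(F,A)$. Then there are $h\in H(\Fb)$ and $b\in A(\Fb)$ with $c_\sigma=h^{-1}\sigma(h)$ and $c_\sigma=b^{-1}\sigma(b)$. Put $a=b^{-1}$; then $a\sigma(a)^{-1}=b^{-1}\sigma(b)=c_\sigma$, using commutativity of $A$. By \cref{lem:HA-descent}, $ha$ is $F$-rational, and its class is $[c]$.

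\emph{Injectivity.} This is the step that needs the most care, because it is where the non-abelian bookkeeping enters. Suppose $x=ha$ and $x'=h'a'$ give cohomologous cocycles. Adjusting the factorization of $x'$ by an element of $K(\Fb)$ (which we are free to do by the construction step), we may assume the cocycles are equal:
\[
 h^{-1}\sigma(h)=h'^{-1}\sigma(h')\qquad\text{and}\qquad a\sigma(a)^{-1}=a'\sigma(a')^{-1}.
\]
The first equality gives $\sigma(h'h^{-1})=h'h^{-1}$, so $h_0:=h'h^{-1}\in H(F)$. The second, again using commutativity of $A$, gives $a_0:=a^{-1}a'\in A(F)$. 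Then $x'=h_0\,h\,a\,a_0=h_0xa_0$, so $x$ and $x'$ lie in the same double coset.

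\emph{Split case.} When $A$ is $F$-split, Hilbert 90 gives $H^1(F,A)=1$, and the general statement reduces to the second formula in the proposition.
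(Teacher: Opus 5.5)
Your proposal is correct and follows the same route as the paper. Both attach to $x=ha$ the $K$-cocycle of \cref{lem:HA-descent}, check that it is unchanged under $H(F)\times A(F)$ and trivial in both $H$ and $A$, obtain surjectivity by trivializing a kernel cocycle separately in $H$ and in $A$, and use Hilbert 90 for the split case. Your injectivity step, which re-chooses the factorization of $x'$ so the cocycles coincide and then reads off $h'h^{-1}\in H(F)$ and $a^{-1}a'\in A(F)$, is left implicit in the paper's proof, and you carry it out correctly.
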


\begin{proof}
Apply \cref{lem:HA-descent} to a factorization $x=ha$ over $\Fb$.  The
resulting $K$-cocycle is trivial after extension to both $H$ and $A$, and its
class is unchanged by left multiplication by $H(F)$ or right multiplication
by $A(F)$.  Conversely, a $K$-cocycle trivial in both $H$ and $A$ admits
trivializations $h$ and $a$ satisfying \eqref{eq:HA-descent-cocycle}, so the
product $ha$ descends to $(HA)(F)$.  The final assertion is Hilbert 90 for the
split torus $A$.
\end{proof}

\subsection{The Kummer obstruction for the Cartan map}
On a $\theta$-split torus one has $\theta(a)=a^{-1}$, and therefore
\[
 K=H\cap A=A[2],
 \qquad
 \tau(a)=\theta(a)^{-1}a=a^2
\]
Thus the same descent cocycle has a particularly simple interpretation for a
Cartan lift.  Let
\[
 \delta:A(F)/A(F)^2\longrightarrow H^1(F,A[2])
\]
be the Kummer map: if $t^2=a_0$, then $\delta(a_0)$ is represented by
$\sigma\mapsto t^{-1}\sigma(t)$.  We use the usual pointed-set conventions
for nonabelian Galois cohomology \cite{Serre}.

\begin{theorem}\label{thm:kummer-criterion}
Assume that $A$ is $F$-split.  For $a_0\in A(F)$, the following are equivalent.
\begin{enumerate}[label=\textup{(\alph*)},leftmargin=2.3em]
\item There exists $u\in G(F)$ with $\tau(u)=a_0$.
\item The Kummer class $\delta(a_0)$ maps to the trivial class in $H^1(F,H)$.
\end{enumerate}
Consequently, under the Kummer identification
$A(F)/A(F)^2\simeq H^1(F,A[2])$, the square-classes represented by values of
$\tau$ form the pointed subset
\begin{equation}\label{eq:tau-image-kernel}
 \delta\bigl(\{a_0\in A(F):a_0=\tau(u)
                   \text{ for some }u\in G(F)\}\bigr)
 =
 \ker\left(H^1(F,A[2])\longrightarrow H^1(F,H)\right)
\end{equation}
\end{theorem}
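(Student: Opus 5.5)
The idea is to reduce the lifting problem for $\tau$ to the descent calculation of \cref{lem:HA-descent}, using the fact that on $A$ the Cartan map is squaring: $\tau(a)=a^2$.

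\textbf{Geometric lift.} Fix $a_0\in A(F)$ and choose $t\in A(\Fb)$ with $t^2=a_0$. Then $\tau(t)=a_0$, so $a_0$ always has a lift over $\Fb$.

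\textbf{Description of all geometric lifts.} Suppose $u\in G(\Fb)$ satisfies $\tau(u)=a_0$. Then $\tau(u)=\tau(t)$, and I claim $ut^{-1}\in H$. Indeed, $\tau(u)=\theta(u)^{-1}u$ and $\tau(t)=\theta(t)^{-1}t$, so their equality gives $\theta(u)^{-1}u=\theta(t)^{-1}t$. Rearranging, $\theta(ut^{-1})=ut^{-1}$. Hence
\[
 \{u\in G(\Fb):\tau(u)=a_0\}=H(\Fb)\,t .
\]
So a rational lift exists exactly when some product $ht$, with $h\in H(\Fb)$, is $F$-rational.

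\textbf{(b)$\Rightarrow$(a).} By \cref{lem:HA-descent}, $ht$ is rational if and only if
\[
 h^{-1}\sigma(h)=t\sigma(t)^{-1}\quad\text{for all }\sigma .
\]
Since $A$ is commutative and $t^2\in A(F)$, we have $\sigma(t)=\pm t$ componentwise in $A[2]$. Therefore
\[
 t\sigma(t)^{-1}=\sigma(t)^{-1}t=\bigl(t^{-1}\sigma(t)\bigr)^{-1}=t^{-1}\sigma(t),
\]
using that elements of $A[2]$ are their own inverses. This is precisely the Kummer cocycle $\delta(a_0)$. Hence a rational lift exists if and only if $\delta(a_0)$ is a coboundary in $H$, that is, lies in $\ker(H^1(F,A[2])\to H^1(F,H))$.

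\textbf{(a)$\Rightarrow$(b).} Given a rational $u$, write $u=ht$ by the description above. The cocycle identity of \cref{lem:HA-descent} then exhibits $\delta(a_0)$ as $h^{-1}\sigma(h)$, which is trivial in $H^1(F,H)$.

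\textbf{Independence of choices.} Changing the square root $t$ to $tk$ with $k\in A[2](\Fb)$ changes the cocycle by an $A[2]$-coboundary. This matches the change-of-factorization clause of \cref{lem:HA-descent}, so the class is well defined. It also depends only on the square class of $a_0$: replacing $a_0$ by $a_0b^2$ with $b\in A(F)$ replaces $t$ by $tb$, which leaves $t^{-1}\sigma(t)$ unchanged.

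\textbf{The equality of pointed sets.} Since $A$ is $F$-split, $H^1(F,A)=1$, and the Kummer sequence
\[
 1\to A[2]\to A\xrightarrow{\;2\;}A\to 1
\]
gives the bijection $A(F)/A(F)^2\simeq H^1(F,A[2])$. The equivalence (a)$\Leftrightarrow$(b), applied to each $a_0$, then gives the equality of subsets in \eqref{eq:tau-image-kernel}. The inclusion "$\supseteq$" uses surjectivity of $\delta$: every class in the kernel is $\delta(a_0)$ for some $a_0\in A(F)$, and that $a_0$ is a value of $\tau$ by (b)$\Rightarrow$(a).

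\textbf{Main obstacle.} The delicate step is keeping the nonabelian conventions straight. One must check that the cocycle produced by \cref{lem:HA-descent}, $\sigma\mapsto h^{-1}\sigma(h)$, is the image of the Kummer cocycle under $A[2]=K\hookrightarrow H$ with matching left/right conventions. Because $A[2]$ is $2$-torsion and commutative, the sign and inversion ambiguities disappear, so I expect this step to go through cleanly.
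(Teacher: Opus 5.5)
Your proof is correct and follows the same route as the paper. You choose $t$ with $t^2=a_0$, note that the Kummer cocycle $t^{-1}\sigma(t)$ equals $t\sigma(t)^{-1}$ because $A[2]$ is $2$-torsion, apply \cref{lem:HA-descent} with $a=t$, and use that $h=ut^{-1}$ is $\theta$-fixed for the converse; your remarks on independence of the choice of $t$ and on the Hilbert~90 bijection $A(F)/A(F)^2\simeq H^1(F,A[2])$ just make explicit what the paper leaves implicit.
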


\begin{proof}
Choose $t\in A(\Fb)$ with $t^2=a_0$.  The Kummer cocycle
\[
 c_\sigma=t^{-1}\sigma(t)\in A[2]
\]
also equals $t\sigma(t)^{-1}$, since every element of $A[2]$ is its own
inverse.  Hence \cref{lem:HA-descent}, with $a=t$, says that a trivialization
\[
 c_\sigma=h^{-1}\sigma(h),\qquad h\in H(\Fb)
\]
is exactly what is needed for $u=ht$ to be $F$-rational.  For such a product,
\[
 \tau(u)=\tau(ht)=t^2=a_0
\]
Conversely, if $u\in G(F)$ satisfies $\tau(u)=a_0=\tau(t)$, then
$h=ut^{-1}$ is fixed by $\theta$, hence lies in $H(\Fb)$; applying
\cref{lem:HA-descent} to the rational product $u=ht$ gives
$c_\sigma=h^{-1}\sigma(h)$.  This proves the equivalence and the final
identification of square classes.
\end{proof}

\subsection{Twisted square-root covers}
The Kummer criterion is pointwise.  For the later weight calculation we will need
rational branches as actual algebraic charts carrying regular maps to
$G$.

Let $X$ be an $F$-variety and let $a:X\to A$ be a morphism.  The ordinary
square-root cover is
\begin{equation}\label{eq:square-root-torsor}
 \cS(a)=X\times_{A,[2]}A
 =\{(x,t):t^2=a(x)\}
\end{equation}
a finite \'etale $A[2]$-torsor over $X$.  Put
\begin{equation}\label{eq:cartan-branch-kernel}
 \mathfrak B_A=
 \ker\left(H^1(F,A[2])\longrightarrow H^1(F,H)\right)
\end{equation}
For $\xi\in\mathfrak B_A$, choose a cocycle $c=(c_\sigma)$ representing
$\xi$.  Since its image in $H^1(F,H)$ is trivial, choose $h\in H(\Fb)$ with
\[
 h^{-1}\sigma(h)=c_\sigma
\]
Over $\Fb$ keep the same square-root cover but change its Galois action to
\begin{equation}\label{eq:twisted-square-root-action}
 \sigma *_c(x,t)=(\sigma x,c_\sigma\sigma(t))
\end{equation}
The resulting $F$-form is denoted
\[
 \pi_\xi:\cS_\xi(a)\longrightarrow X
\]
and is called the square-root branch attached to $\xi$.  Changing the cocycle
within its cohomology class gives an isomorphic $F$-form.

\begin{proposition}[Twisted Cartan-lift charts]\label{prop:twisted-cartan-charts}
Assume that $A$ is $F$-split.  For every $\xi\in\mathfrak B_A$ the formula
\begin{equation}\label{eq:twisted-u-formula}
 u_\xi(x,t)=ht
\end{equation}
descends to a regular $F$-morphism $u_\xi:\cS_\xi(a)\to G$ satisfying
\begin{equation}\label{eq:twisted-cartan-lift}
 \tau(u_\xi)=a\circ\pi_\xi
\end{equation}
Moreover:
\begin{enumerate}[label=\textup{(\alph*)},leftmargin=2.3em]
\item For $x\in X(F)$ one has
      \begin{equation}\label{eq:twisted-fiber-kummer}
       \cS_{\xi,x}(a)(F)\ne\varnothing
       \quad\Longleftrightarrow\quad
       \delta(a(x))=\xi
\end{equation}
      and therefore
      \begin{equation}\label{eq:liftable-locus-decomposition}
       \{x\in X(F):a(x)\in\tau(G(F))\}
       =\coprod_{\xi\in\mathfrak B_A}
          \pi_\xi\bigl(\cS_\xi(a)(F)\bigr)
\end{equation}
\item Changing the choice of $h$ changes $u_\xi$ by left multiplication by an
      element of $H(F)$
\item If $Q:X\to\g$ is an $F$-morphism with
      \[
       \theta(Q)=\Ad(a)Q
\]
      then
      \begin{equation}\label{eq:twisted-Y}
       Y_\xi=\Ad(u_\xi)Q:\cS_\xi(a)\longrightarrow\h
\end{equation}
      is an $F$-morphism and
      \begin{equation}\label{eq:twisted-inverse-conjugate}
       \Ad(u_\xi^{-1})Y_\xi=Q\circ\pi_\xi
\end{equation}
\end{enumerate}
\end{proposition}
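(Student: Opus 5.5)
The plan is to argue entirely by Galois descent over $\Fb$. The morphism $(x,t)\mapsto ht$ is defined over $\Fb$ on the untwisted cover $\cS(a)_{\Fb}$. By Galois descent for quasi-projective varieties, it descends to $\cS_\xi(a)$ exactly when it is equivariant for the twisted action \eqref{eq:twisted-square-root-action} on the source and the standard action on $G$. Equivariance amounts to the identity
\[
 \sigma(h)\sigma(t)=h\,c_\sigma\sigma(t),
\]
which holds because $h^{-1}\sigma(h)=c_\sigma$. Regularity is automatic, since $t$ is a coordinate on the cover.

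For \eqref{eq:twisted-cartan-lift}, the key inputs are that $h$ is $\theta$-fixed and $\theta(t)=t^{-1}$. These give
\[
 \tau(ht)=t\,\theta(h)^{-1}h\,t=t^2=a(x).
\]
Both sides of this identity are $F$-morphisms, so it descends.

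For (a), an $F$-point over $x$ is a pair $(x,t)$ with $t^2=a(x)$ and $c_\sigma\sigma(t)=t$, that is, $t^{-1}\sigma(t)=c_\sigma$ (using $c_\sigma=c_\sigma^{-1}$). This says precisely that the Kummer cocycle of $a(x)$ computed with the root $t$ equals $c$. Changing $t$ by $A[2](\Fb)$ changes the cocycle by a coboundary. Hence some $F$-point exists if and only if $\delta(a(x))=\xi$.

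For the decomposition \eqref{eq:liftable-locus-decomposition}, the key input is \cref{thm:kummer-criterion}: $a(x)\in\tau(G(F))$ if and only if $\delta(a(x))\in\mathfrak B_A$. Disjointness holds because $\delta(a(x))$ is a single class. One subtlety needs attention: the cocycle $c$ must be fixed once per class, and if $\delta(a(x))=\xi$ only up to coboundary, I will adjust $t$ accordingly. This is harmless because coboundaries by $A[2]$ are absorbed by $t\mapsto kt$.

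For (b), suppose $h'$ is another choice. Then $h'h^{-1}$ is $\Gal$-fixed, since
\[
 \sigma(h')\sigma(h)^{-1}=h'c_\sigma c_\sigma^{-1}h^{-1}.
\]
So $h'h^{-1}\in H(F)$, and $u'_\xi=(h'h^{-1})u_\xi$.

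For (c), the defining relation $\theta(Q)=\Ad(a)Q$ gives
\[
 \theta(\Ad(ht)Q)=\Ad(h\,t^{-1})\Ad(a)Q=\Ad(h\,t^{-1}t^2)Q=\Ad(ht)Q.
\]
Hence $Y_\xi$ takes values in $\h$. It is an $F$-morphism because $u_\xi$ and $Q\circ\pi_\xi$ are. The identity \eqref{eq:twisted-inverse-conjugate} is then tautological.

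The only genuinely delicate step is the descent statement itself. It requires checking that the twisted action is a continuous, semilinear effective descent datum, so that $\cS_\xi(a)$ exists as a variety. This holds because $\cS(a)$ is finite over $X$, hence affine over $X$, and the cocycle factors through a finite Galois extension splitting $c$ and $h$. I would cite standard Galois descent for quasi-projective (or relatively affine) schemes for this point.
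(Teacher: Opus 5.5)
Your proposal is correct and follows essentially the same route as the paper. The paper also checks equivariance of $(x,t)\mapsto ht$ for the twisted action via $h^{-1}\sigma(h)=c_\sigma$, computes $\tau(ht)=t^2$, proves (a) from the fixed-point condition $t^{-1}\sigma(t)=c_\sigma$ plus the Kummer criterion, proves (b) from $h'h^{-1}\in H(F)$, and obtains (c) by the $\theta$-descent computation. Your extra remarks on effectivity of the descent datum and on adjusting $t$ within a class are harmless; the latter is in fact vacuous, since $A[2]$ is a constant group when $A$ is $F$-split, so $A[2]$-coboundaries are trivial.
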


\begin{proof}
The formula \eqref{eq:twisted-u-formula} is compatible with the twisted
Galois action because
\[
 u_\xi(\sigma *_c(x,t))
 =h c_\sigma\sigma(t)
 =\sigma(h)\sigma(t)
 =\sigma(ht)
\]
so it descends to $F$.  Since $h\in H(\Fb)$ and $t\in A(\Fb)$ one has
\[
 \tau(ht)=t^2=a(x)
\]
where $t^2=a(x)$ by the definition of $\cS(a)$.  This gives
\eqref{eq:twisted-cartan-lift}.

For $x\in X(F)$, an $F$-point of the twisted fiber is a square root
$t^2=a(x)$ satisfying
\[
 t=c_\sigma\sigma(t)
\]
Equivalently $t^{-1}\sigma(t)=c_\sigma$, so such a point exists exactly when
$\delta(a(x))=\xi$.  This proves \textup{(a)} by
\cref{thm:kummer-criterion}.  If $h'$ is another trivialization of the same
cocycle then $h'h^{-1}\in H(F)$, proving \textup{(b)}.  Finally, \textup{(c)} follows from the elementary conjugation identity
recorded later as \cref{lem:formal-descent}; it is stated separately there only
for convenient reuse.  The inverse-conjugate identity is immediate from the
definition.
\end{proof}

\begin{remark}
For the non-Archimedean local fields considered in the trace formula,
$\mathfrak B_A$ is finite.  Thus the rational Cartan-lift locus is covered by
a finite family of algebraic twists, not by a pointwise choice of square
roots.  The twists are geometrically isomorphic, but their sets of $F$-points
can of course be different.
\end{remark}

\begin{remark}
The rank-one norm conditions of \cref{lem:sl2-section} describe the image of
the corresponding root-$\SL_2$ section.  The parameters $\gamma_\beta$ are
therefore not harmless normalizing constants: their square classes determine
which coroot values occur over $F$ through this explicit rank-one
construction.  They need not by themselves determine the image of the full
Cartan map, which is governed by \cref{thm:kummer-criterion}.  When
$H=\GL_n$, one has $H^1(F,H)=1$, so the global Kummer
obstruction vanishes and the square classes still label the rational
Cartan-lift branches.  In the explicit symplectic family of Section~10, a
square-class parameter also controls the limiting ambient $G(F)$-nilpotent
orbit; no canonical identification of these two kinds of branch data is
intended.
\end{remark}

\subsection{The coroot isogeny and its descent obstruction}
Let $\cB=\{\beta_1,\ldots,\beta_r\}$ be a full-rank strongly orthogonal
system.  It is useful to separate the two maps already implicit in
\cref{thm:orthogonal-section}.  Put
\[
 A_\cB=\prod_{\beta\in\cB}\Gm
\]
and define
\[
 N_\cB:\mathcal R_\cB\longrightarrow A_\cB,
 \qquad
 N_\cB((w_\beta)_\beta)
 =\bigl(N_{E_\beta/F}(w_\beta)\bigr)_\beta
\]
and
\[
 \overline\nu_\cB:A_\cB\longrightarrow A,
 \qquad
 \overline\nu_\cB((z_\beta)_\beta)
 =\prod_{\beta\in\cB}\beta^\vee(z_\beta)
\]
Then $\nu_\cB=\overline\nu_\cB\circ N_\cB$.  Since $\cB$ has full rank,
$\overline\nu_\cB$ is an isogeny.  Write
\[
 K_\cB=\ker\overline\nu_\cB
\]
The exact sequence
\[
 1\longrightarrow K_\cB\longrightarrow A_\cB
 \xrightarrow{\overline\nu_\cB}A\longrightarrow1
\]
and Hilbert 90 give
\[
 A(F)/\overline\nu_\cB(A_\cB(F))
 \simeq H^1(F,K_\cB)
\]
Thus the failure of the strongly orthogonal coroots to generate $A(F)$ is
the connecting obstruction associated with the finite kernel
$K_\cB$.

The preceding norm maps determine which points of $A_\cB(F)$ are reached by
the explicit rank-one sections.  After passing through the coroot isogeny,
\cref{thm:kummer-criterion} determines which remaining points of $A(F)$ admit
a lift through the Cartan map.  The two obstructions are therefore
separate: the first comes from the norm maps and the coroot isogeny, while
the second is the intrinsic symmetric-space obstruction measured in
$H^1(F,H)$.  The twisted square-root charts of
\cref{prop:twisted-cartan-charts} resolve this second obstruction uniformly;
the strongly orthogonal construction gives explicit representatives when the
additional norm and isogeny obstructions vanish.

\section{\texorpdfstring{The Jacobi family and $\theta$-descent}{The Jacobi family and theta-descent}}

\subsection{Reduction to the adjoint derived group}
The root construction belongs to the derived Lie algebra.

\begin{proposition}\label{prop:adjoint-reduction}
Let $G$ be connected reductive.  Write
$\g=\mathfrak z(\g)\oplus\g_{\mathrm{der}}$.  Then
\[
 \g/\!/G\simeq
 \mathfrak z(\g)\times
 (\g_{\mathrm{der}}/\!/G_{\mathrm{ad}})
\]
The Jacobi family, regularity, and the principal nilpotent degeneration are
contained in $\g_{\mathrm{der}}$ and are geometrically unchanged, after
identifying the Lie algebras, under a central isogeny of semisimple groups.
Rational lifts and rational orbit decompositions can nevertheless change.
Moreover, for a $\theta$-equivariant central isogeny
$\pi:G_1\to G_2$,
\[
 \pi\circ\tau_{G_1}=\tau_{G_2}\circ\pi
\]
Thus the geometric construction may be carried out for the adjoint derived
group.  Pulling an $F$-rational Cartan lift back through $\pi$ introduces
only the usual obstruction in $H^1(F,\ker\pi)$.
\end{proposition}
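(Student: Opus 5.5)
The plan is to prove the four assertions of \cref{prop:adjoint-reduction} in order, each by a short structural argument.

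\emph{Step 1: the quotient decomposition.} Since $G$ is connected reductive, its Lie algebra splits as $\g=\mathfrak z(\g)\oplus\g_{\mathrm{der}}$. This splitting is $G$-stable, and $G$ acts trivially on $\mathfrak z(\g)$. The adjoint action of $G$ on $\g_{\mathrm{der}}$ factors through $G/Z(G)=G_{\mathrm{ad}}$. Hence
\[
 F[\g]^G=F[\mathfrak z(\g)]\otimes F[\g_{\mathrm{der}}]^{G_{\mathrm{ad}}},
\]
which gives the product decomposition of $\g/\!/G$. The equality of invariant rings holds because $G$ acts through $G_{\mathrm{ad}}$ on the second tensor factor and trivially on the first, and a tensor product with a trivially acting factor passes through invariants. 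I would also note that $\theta$ preserves both summands, since the center and the derived algebra are characteristic.

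\emph{Step 2: geometric invariance under a central isogeny.} Let $\pi:G_1\to G_2$ be a central isogeny of semisimple groups. In characteristic zero, $d\pi$ is an isomorphism of Lie algebras. It carries root vectors $n_{\pm\alpha}$ to root vectors and maximal tori to maximal tori. Moreover $\Ad_{G_2}(\pi(g))\circ d\pi=d\pi\circ\Ad_{G_1}(g)$, and the adjoint actions of both groups factor through the common adjoint group. It follows that regularity (centralizer dimension equal to the rank), nilpotency, principal nilpotency, and the family $Q(q)$, which is built only from root vectors and simple roots, all correspond under $d\pi$. For the same reason the adjoint quotients are identified. This is all that ``geometrically unchanged'' requires.

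\emph{Step 3: equivariance of $\tau$.} If $\pi\circ\theta_1=\theta_2\circ\pi$, then
\[
 \pi(\tau_{G_1}(g))=\pi(\theta_1(g))^{-1}\pi(g)=\theta_2(\pi(g))^{-1}\pi(g)=\tau_{G_2}(\pi(g)).
\]

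\emph{Step 4: descent of rational lifts.} Given $u_2\in G_2(F)$ with $\tau(u_2)=a_2$, I would study lifts through $\pi$. Choose $u_1\in G_1(\Fb)$ with $\pi(u_1)=u_2$, which is possible because $\pi$ is surjective. Then $\sigma\mapsto u_1^{-1}\sigma(u_1)$ is a cocycle valued in the central group $\ker\pi$. Its class is the image of $u_2$ under the connecting map $G_2(F)\to H^1(F,\ker\pi)$, and $u_1$ can be modified to be rational exactly when this class vanishes. When that happens, $\tau(u_1)$ lies over $a_2$ by Step 3. Doing the same over a chart $X$, fiberwise or after the finite étale $\ker\pi$-cover, gives the statement for families.

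Rational orbit decompositions can genuinely change, because $G_1(F)\to G_2(F)$ need not be surjective. I would record this only as a remark, citing the $\SL_2\to\operatorname{PGL}_2$ example, and not prove anything further about it.

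The only point needing care is the $\theta$-equivariance in the final sentence. Its hypothesis is that $\theta$ lifts compatibly to the isogeny. I would state this hypothesis explicitly, noting that for the adjoint quotient $G\to G_{\mathrm{ad}}$ it is automatic, since $\theta$ preserves the center. I expect no substantive obstacle; the proposition is essentially bookkeeping.
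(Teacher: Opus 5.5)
Your proposal is correct and follows the paper's proof essentially step for step. The matching steps are: the product decomposition from the trivial action on $\mathfrak z(\g)$ and the factorization of the action on $\g_{\mathrm{der}}$ through $G_{\mathrm{ad}}$; the identification of root spaces, quotients, regular loci and nilpotent orbits via the isomorphism $d\pi$; the one-line computation for $\pi\circ\tau_{G_1}=\tau_{G_2}\circ\pi$; and the connecting class in $H^1(F,\ker\pi)$ as the lifting obstruction. Your version is slightly more explicit in two places: the tensor-product description of the invariant ring, and the remark that a rational lift $u_1$ only has $\tau(u_1)$ lying over $a_2$, i.e.\ determined up to $\ker\pi$. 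No different idea is involved.
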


\begin{proof}
The adjoint action is trivial on $\mathfrak z(\g)$ and factors through the
adjoint group on $\g_{\mathrm{der}}$, giving the product decomposition of
the invariant quotient.  A central isogeny of semisimple groups has finite
kernel, so its differential is an isomorphism in characteristic zero and
identifies the root spaces, adjoint quotients, regular loci, and geometric
nilpotent orbits.  The identity for $\tau$ follows from $\theta$-equivariance of
$\pi$.  Finally, the obstruction to lifting an $F$-point through a central
isogeny is the connecting class in $H^1(F,\ker\pi)$.
\end{proof}

The center decomposes into its $+1$- and $-1$-eigenspaces under $\theta$.  These
coordinates are independent of the root construction and may be restored
after treating the derived algebra.  We therefore work with the adjoint
semisimple group in the rest of this section; the central lifting obstruction
is included with the other rational branch data.

\subsection{Definition of the family}
Assume henceforth in this section that $G$ is adjoint.  Fix the $F$-split $\theta$-split maximal torus $A$ used in Section~5, a
Borel subgroup containing $A$, and a set of simple roots $\Delta$.  Choose root vectors
\[
 n_\alpha\in\g_\alpha,
 \qquad n_{-\alpha}\in\g_{-\alpha}
\]
from a compatible Chevalley basis, so that
$[n_\alpha,n_{-\alpha}]=\alpha^\vee$, and normalize them further so that
\begin{equation}\label{eq:theta-root-vectors}
 \theta(n_\alpha)=\gamma_\alpha n_{-\alpha}
 \qquad
 \theta(n_{-\alpha})=\gamma_\alpha^{-1}n_\alpha
\end{equation}
for constants $\gamma_\alpha\in F^\times$.  If the compatible root
vectors are replaced by
$n_\alpha' =c_\alpha n_\alpha$ and
$n_{-\alpha}'=c_\alpha^{-1}n_{-\alpha}$, then
$\gamma_\alpha$ is replaced by $c_\alpha^2\gamma_\alpha$.
Consequently its square class is unchanged by such compatible rescaling.

\begin{definition}\label{def:jacobi-family}
For $q=(q_\alpha)_{\alpha\in\Delta}$, define
\begin{equation}\label{eq:jacobi-family}
 Q(q)=\sum_{\alpha\in\Delta}
 \left(\gamma_\alpha n_{-\alpha}+q_\alpha n_\alpha\right)
\end{equation}
We call $\cQ=\{Q(q)\}$ the $\theta$-adapted Jacobi family.  The point
\[
 Q(0)=\sum_{\alpha\in\Delta}\gamma_\alpha n_{-\alpha}
\]
is principal nilpotent.
\end{definition}

This is a fixed-negative-coefficient slice in the generalized Toda lattice of
Kostant \cite{KostantToda}.  Two independent properties make it useful here:
its quotient map has the expected rank on the fixed part of the adjoint
quotient, and it carries a torus-valued $\theta$-twist.

\subsection{The twisting element}
Because $G$ is adjoint, the simple roots form a basis of $X^*(A)$.  For
$q\in\Gm^\Delta$, let $a(q)\in A$ be the unique element satisfying
\begin{equation}\label{eq:a-q}
 \alpha(a(q))=q_\alpha^{-1}
 \qquad(\alpha\in\Delta)
\end{equation}

\begin{lemma}\label{lem:theta-Q}
For every $q\in\Gm^\Delta$,
\begin{equation}\label{eq:theta-Q}
 \theta(Q(q))=\Ad(a(q))Q(q)
\end{equation}
\end{lemma}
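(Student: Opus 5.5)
Both sides of \eqref{eq:theta-Q} are linear combinations of the simple root vectors $n_{\pm\alpha}$, $\alpha\in\Delta$, so the plan is to compute each side term by term and compare coefficients.

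For the left-hand side we use the normalization \eqref{eq:theta-root-vectors}. It gives
\[
 \theta(\gamma_\alpha n_{-\alpha})=\gamma_\alpha\gamma_\alpha^{-1}n_\alpha=n_\alpha,
 \qquad
 \theta(q_\alpha n_\alpha)=q_\alpha\gamma_\alpha n_{-\alpha}.
\]
Hence
\[
 \theta(Q(q))=\sum_{\alpha\in\Delta}\bigl(n_\alpha+q_\alpha\gamma_\alpha n_{-\alpha}\bigr).
\]
We should check that \eqref{eq:theta-root-vectors} is self-consistent with $\theta^2=1$. Indeed $\theta^2(n_\alpha)=\gamma_\alpha\theta(n_{-\alpha})=n_\alpha$, so the two displayed relations are the same datum and nothing further is assumed.

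For the right-hand side, $a(q)\in A$ acts on $\g_{\pm\alpha}$ by the characters $\pm\alpha$. By \eqref{eq:a-q},
\[
 \Ad(a(q))n_\alpha=q_\alpha^{-1}n_\alpha,
 \qquad
 \Ad(a(q))n_{-\alpha}=q_\alpha n_{-\alpha}.
\]
Therefore
\[
 \Ad(a(q))Q(q)=\sum_{\alpha\in\Delta}\bigl(\gamma_\alpha q_\alpha n_{-\alpha}+q_\alpha q_\alpha^{-1}n_\alpha\bigr)
 =\sum_{\alpha\in\Delta}\bigl(n_\alpha+q_\alpha\gamma_\alpha n_{-\alpha}\bigr).
\]
This agrees with the left-hand side, which proves \eqref{eq:theta-Q}.

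The computation is routine. The only point needing care is that $a(q)$ is well defined as an $F$-morphism $\Gm^\Delta\to A$. This holds because $G$ is adjoint, so $\Delta$ is a $\mathbb Z$-basis of $X^*(A)$ and $A\simeq\Gm^\Delta$ via $a\mapsto(\alpha(a))_\alpha$. Note also that no relation of the form $\theta|_A=$ inversion is needed in the calculation; only the action of $\theta$ on the simple root vectors enters.
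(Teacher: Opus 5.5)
Your proposal is correct and follows essentially the same route as the paper: apply \eqref{eq:theta-root-vectors} term by term to get $\theta(Q(q))=\sum_{\alpha}(n_\alpha+q_\alpha\gamma_\alpha n_{-\alpha})$, use \eqref{eq:a-q} to compute $\Ad(a(q))Q(q)$, and compare coefficients. Your extra remarks on the consistency of \eqref{eq:theta-root-vectors} with $\theta^2=1$ and on adjointness making $a(q)$ well defined are accurate.
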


\begin{proof}
By \eqref{eq:theta-root-vectors},
\[
 \theta(Q(q))
 =\sum_{\alpha\in\Delta}
 \left(n_\alpha+q_\alpha\gamma_\alpha n_{-\alpha}\right)
\]
On the other hand,
\begin{align*}
 \Ad(a(q))Q(q)
 &=\sum_{\alpha\in\Delta}
 \left(
 \gamma_\alpha\alpha(a(q))^{-1}n_{-\alpha}
 +q_\alpha\alpha(a(q))n_\alpha
 \right) \\
 &=\sum_{\alpha\in\Delta}
 \left(q_\alpha\gamma_\alpha n_{-\alpha}+n_\alpha\right)
\end{align*}
\end{proof}

\begin{lemma}[$\theta$-descent]\label{lem:formal-descent}
Let $X\in\g(\Fb)$ and $a,u\in G(\Fb)$ satisfy
\[
 \theta(X)=\Ad(a)X
 \qquad
 \tau(u)=a
\]
Then $\Ad(u)X\in\h(\Fb)$.  If $X$, $a$, and $u$ are defined over $F$, then
$\Ad(u)X\in\h(F)$.
\end{lemma}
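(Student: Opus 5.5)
The plan is a direct computation: $\Ad(u)X$ lies in $\h$ exactly when it is fixed by $d\theta$, so we compute $\theta(\Ad(u)X)$ and show it equals $\Ad(u)X$. The only inputs are the definition $\tau(u)=\theta(u)^{-1}u$ and the compatibility of $\theta$ with the adjoint action, namely $\theta(\Ad(g)Y)=\Ad(\theta(g))\theta(Y)$. This compatibility holds because $d\theta$ is the differential of the group automorphism $\theta$, and $\Ad(g)$ is the differential of conjugation by $g$.

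First, from $\tau(u)=a$ we get $\theta(u)^{-1}u=a$, so $\theta(u)=ua^{-1}$. Then
\[
 \theta(\Ad(u)X)=\Ad(\theta(u))\,\theta(X)
 =\Ad(ua^{-1})\Ad(a)X=\Ad(u)X,
\]
using the hypothesis $\theta(X)=\Ad(a)X$. This shows that $\Ad(u)X$ lies in the $+1$ eigenspace of $d\theta$ on $\g(\Fb)$, which is $\h(\Fb)$. Everything here takes place over $\Fb$, where all objects are defined, so no rationality issue arises at this stage.

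For the rational statement, note that if $X\in\g(F)$ and $u\in G(F)$, then $\Ad(u)X\in\g(F)$, since the adjoint action is defined over $F$. Because $\theta$ is defined over $F$, the eigenspace $\h=\g^{d\theta}$ is an $F$-subspace and $\h(F)=\h(\Fb)\cap\g(F)$. Hence $\Ad(u)X\in\h(F)$. The rationality of $a$ is not actually needed once $u$ is rational, since $a=\tau(u)$ is then automatically rational.

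There is no real obstacle. The only point that needs care is the sign and order convention in $\tau(x)=\theta(x)^{-1}x$, because it determines whether $\theta(u)=ua^{-1}$ or $\theta(u)=ua$. With the paper's convention the computation above closes as written, and it matches Lemma~\ref{lem:theta-Q}, where $\theta(Q)=\Ad(a(q))Q$.
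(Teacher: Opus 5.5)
Your proof is correct and is the same computation the paper gives: the paper writes $\theta(\Ad(u)X)=\Ad(\theta(u))\theta(X)=\Ad(\theta(u)a)X=\Ad(u)X$, using $\theta(u)a=u$, which is your $\theta(u)=ua^{-1}$ step. Your rationality remark, via $\h(F)=\h(\Fb)\cap\g(F)$, fills in the second sentence of the lemma, which the paper leaves implicit.
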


\begin{proof}
We compute
\[
 \theta(\Ad(u)X)
 =\Ad(\theta(u))\theta(X)
 =\Ad(\theta(u)a)X
 =\Ad(u)X
\]
\end{proof}

\begin{proposition}\label{prop:jacobi-descent}
Let $q\in(F^\times)^\Delta$.  If there exists $u(q)\in G(F)$ with
$\tau(u(q))=a(q)$, then
\[
 \Ad(u(q))Q(q)\in\h(F)
\]
Such a rational lift exists if and only if
\[
 \delta(a(q))\in
 \ker\left(H^1(F,A[2])\longrightarrow H^1(F,H)\right)
\]
\end{proposition}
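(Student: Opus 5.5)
The proof has two parts, and both reduce to results already in place.

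For the first assertion, I fix $q\in(F^\times)^\Delta$. Then $Q(q)\in\g(F)$, since the $\gamma_\alpha$ and the root vectors $n_{\pm\alpha}$ come from an $F$-rational Chevalley basis. The element $a(q)\in A(F)$ is defined over $F$ because $G$ is adjoint: the simple roots form a $\mathbb Z$-basis of $X^*(A)$, and $A$ is $F$-split, so the conditions \eqref{eq:a-q} determine a unique $F$-point of $A$. \Cref{lem:theta-Q} gives $\theta(Q(q))=\Ad(a(q))Q(q)$. If $u(q)\in G(F)$ satisfies $\tau(u(q))=a(q)$, then \cref{lem:formal-descent} applies with $X=Q(q)$, $a=a(q)$ and $u=u(q)$, all defined over $F$, and gives $\Ad(u(q))Q(q)\in\h(F)$. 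The content of that lemma is the one-line computation $\theta(\Ad(u)X)=\Ad(\theta(u)a)X=\Ad(u)X$, which uses $\theta(u)a=\theta(u)\theta(u)^{-1}u=u$.

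For the second assertion, I apply \cref{thm:kummer-criterion} to $a_0=a(q)\in A(F)$. Its hypothesis, that $A$ is $F$-split, is part of the standing assumptions. That theorem states that some $u\in G(F)$ with $\tau(u)=a_0$ exists if and only if the Kummer class $\delta(a_0)\in H^1(F,A[2])$ maps to the trivial class of $H^1(F,H)$, i.e.\ lies in $\mathfrak B_A$. This is exactly the claimed criterion.

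The only step that needs care is rationality, since everything else is quoted. Specifically, $a(q)$ must be an $F$-point of $A$ and not merely a $\Fb$-point, so that the Kummer class is defined. This is where adjointness is used: for non-adjoint $G$ the element $a(q)$ is determined only up to the center, and one would first have to pass through \cref{prop:adjoint-reduction}. The map $H^1(F,A[2])\to H^1(F,H)$ is induced by the inclusion $A[2]=H\cap A\subset H$, which holds because $\theta(t)=t^{-1}=t$ for $t\in A[2]$, so the kernel in the statement is the same pointed set as in \cref{thm:kummer-criterion}.
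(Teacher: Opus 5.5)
Your proposal is correct and follows the paper's own proof: the first assertion comes from \cref{lem:theta-Q,lem:formal-descent}, and the second from \cref{thm:kummer-criterion} applied to $a_0=a(q)$. Your rationality checks, namely that $Q(q)\in\g(F)$ and $a(q)\in A(F)$ via adjointness and $F$-splitness of $A$, and that $A[2]\subset H$, are details the paper leaves implicit.
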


\begin{proof}
The first assertion is \cref{lem:theta-Q,lem:formal-descent}; the second is
\cref{thm:kummer-criterion}.
\end{proof}

\subsection{Generic transversality to the adjoint quotient}
Let $\chi:\g\to\cC_G=\g/\!/G$ be the adjoint quotient and let
$m_1,\ldots,m_r$ be the exponents of $\g$.  We first record two elementary
lemmas used in the differential calculation.

\begin{lemma}[The differential of the adjoint quotient]
\label{lem:quotient-differential}
Let $x\in\g(\Fb)$ be regular semisimple, and let $B$ be a nondegenerate
invariant bilinear form on $\g$.  Then
\[
 \g=[\g,x]\oplus\g_x
 \qquad
 \ker(d\chi_x)=[\g,x]
\]
If $V\subset\g$ is a linear subspace and
$\operatorname{pr}_x:\g\to\g_x$ is projection along $[\g,x]$, then
\[
 \rank\bigl(d\chi_x|_V\bigr)=\dim\operatorname{pr}_x(V)
\]
\end{lemma}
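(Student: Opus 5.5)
The plan is to combine three standard facts about a regular semisimple element: its centralizer is a Cartan subalgebra, the invariant form is nondegenerate on it, and the adjoint quotient restricts to the Chevalley quotient map on that Cartan.

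\textbf{The direct sum decomposition.} Work over $\Fb$. Since $x$ is regular semisimple, $\ad x$ is diagonalizable with kernel $\g_x$, a Cartan subalgebra of dimension $r$. Its image is the sum of the nonzero eigenspaces, namely the root spaces for the Cartan $\g_x$. This gives
\[
 \g=[\g,x]\oplus\g_x .
\]
Invariance of $B$ gives $B([y,x],h)=B(y,[x,h])=0$ for $h\in\g_x$, so the decomposition is $B$-orthogonal. Hence $B$ is nondegenerate on $\g_x$ and $\operatorname{pr}_x$ is the $B$-orthogonal projection; this is where the hypothesis on $B$ enters.

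\textbf{The kernel of $d\chi_x$.} First, $[\g,x]\subset\ker d\chi_x$: the map $\chi$ is constant on the orbit $\Ad(G)x$, so it is constant along the curve $t\mapsto\Ad(\exp(ty))x$ (or the algebraic orbit map), and the tangent vectors $[y,x]$ are killed by $d\chi_x$.

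For the reverse inclusion, use Chevalley restriction: $\chi|_{\g_x}\colon\g_x\to\g_x/W$ is the quotient by the Weyl group. At a regular point of $\g_x$ the group $W$ acts freely, so this quotient map is étale there. Concretely, the Jacobian of the basic invariants restricted to the Cartan is a nonzero multiple of the product of the roots, and this product is nonzero at a regular $x$. Therefore $d\chi_x|_{\g_x}$ is injective of rank $r$. Since the target $\cC_G$ has dimension $r$ and $d\chi_x$ already vanishes on $[\g,x]$, we get $\ker d\chi_x=[\g,x]$ exactly.

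\textbf{The rank formula.} Write $d\chi_x=d\chi_x|_{\g_x}\circ\operatorname{pr}_x$; this holds because both sides vanish on $[\g,x]$ and agree on $\g_x$. The first factor is injective, so
\[
 \rank\bigl(d\chi_x|_V\bigr)=\dim\operatorname{pr}_x(V).
\]

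\textbf{Main obstacle.} The only nonformal step is the injectivity of $d\chi_x|_{\g_x}$. I would cite Chevalley's theorem together with the Jacobian criterion (the Jacobian equals a constant times $\prod_{\alpha>0}\alpha$), or equivalently étaleness of $\mathfrak t\to\mathfrak t/W$ over the regular locus. No rationality is needed, since the statement is geometric.
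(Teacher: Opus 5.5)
Your proof is correct. Its outer structure matches the paper's:
\begin{itemize}
\item the decomposition comes from semisimplicity of $\ad x$;
\item the inclusion $[\g,x]\subseteq\ker d\chi_x$ comes from constancy of $\chi$ on orbits;
\item the rank formula comes from passing to $\g/[\g,x]\simeq\g_x$.
\end{itemize}

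Where you differ is the reverse inclusion. The paper cites smoothness of the adjoint quotient at regular elements, i.e.\ Kostant's criterion that $dP_1,\ldots,dP_r$ are independent at every regular point. This gives $\dim\ker d\chi_x=\dim\g-r$, and the paper then compares dimensions. You instead restrict to the Cartan subalgebra $\g_x$ and combine Chevalley restriction with the reflection-group Jacobian formula $\det(\partial P_i/\partial x_j)=c\prod_{\alpha>0}\alpha$. That shows directly that $d\chi_x|_{\g_x}$ is an isomorphism onto $T_{\chi(x)}\cC_G$.

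Your route has several advantages.
\begin{itemize}
\item It is more elementary and self-contained for regular semisimple $x$, which is all the lemma needs; the decomposition fails for non-semisimple regular elements anyway.
\item It gives surjectivity of $d\chi_x$ for free.
\item It makes the factorization $d\chi_x=d\chi_x|_{\g_x}\circ\operatorname{pr}_x$ explicit.
\item It uses only the Jacobian formula that the paper itself invokes in \cref{lem:quotient-map-relative-jacobian}.
\end{itemize}

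The paper's citation is a stronger statement, valid at all regular elements including nilpotent ones. That stronger form is used again in \cref{lem:jacobi-cofactor-identity}, where the gradients $\nabla P_i(Q(q))$ must span $\g_{Q(q)}$ even at $q=0$. For the present lemma this extra generality is not needed.
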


\begin{proof}
Invariance of $B$ gives
\[
 [\g,x]^\perp=\g_x
\]
Since $x$ is semisimple, $\ad(x)$ is semisimple, so its image and kernel are
complementary; this proves the first decomposition.  Invariant functions are
constant on the orbit of $x$, hence
$[\g,x]\subseteq\ker(d\chi_x)$.  The adjoint quotient is smooth at every
regular element and has relative dimension $\dim\g-r$.  Both spaces therefore
have dimension $\dim\g-r$, proving equality.  The final assertion follows by
passing to the quotient $\g/[\g,x]\simeq\g_x$.
\end{proof}

\begin{lemma}[The principal rotation coefficient]
\label{lem:principal-rotation}
Let $m\geq1$ and let $V_{2m}=\operatorname{Sym}^{2m}(\Fb^2)$ be the
irreducible $\slie_2$-module of highest weight $2m$.  Write
\[
 v_2=X^{m+1}Y^{m-1}
 \qquad
 v_0=X^mY^m
\]
for nonzero vectors of weights $2$ and $0$.  Let $s$ be a projective
$\SL_2$ element which conjugates $e+f$ to $h$ and whose action on binary forms,
up to a nonzero scalar, is
\[
 p(X,Y)\longmapsto p(X+Y,Y-X)
\]
The coefficient of $v_0$ in $s v_2$ is zero when $m$ is even.  If
$m=2k+1$, it is a nonzero scalar multiple of
\[
 2(-1)^k\binom{2k}{k}
\]
In particular, the projection of $sV_{2m}(2)$ to $V_{2m}(0)$ is nonzero if
and only if $m$ is odd.
\end{lemma}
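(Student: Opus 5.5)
The plan is to compute the coefficient directly in the monomial model of $V_{2m}$, after a factorization that makes its parity dependence visible.

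First I reduce to a monomial coefficient. In $V_{2m}=\operatorname{Sym}^{2m}(\Fb^2)$ the monomials $X^{m+j}Y^{m-j}$, for $-m\le j\le m$, form a weight basis, with $X^{m+j}Y^{m-j}$ of weight $2j$. Each weight space is therefore one-dimensional: $V_{2m}(2)=\Fb v_2$ and $V_{2m}(0)=\Fb v_0$. Hence the $v_0$-coefficient of $s v_2$ is simply the coefficient of the monomial $X^mY^m$ in the image of $v_2$. Since $s$ is specified only up to a nonzero scalar, it suffices to compute this coefficient for the substitution $p(X,Y)\mapsto p(X+Y,Y-X)$.

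Next I factor the image of $v_2$. Because $m\ge1$, the exponent $m-1$ is nonnegative, and
\[
 (X+Y)^{m+1}(Y-X)^{m-1}=(X+Y)^2\,(Y^2-X^2)^{m-1}
 =(X^2+2XY+Y^2)\sum_{i=0}^{m-1}\binom{m-1}{i}(-1)^iX^{2i}Y^{2m-2-2i}.
\]
The second factor contains only even powers of $X$. So the coefficient of $X^mY^m$ comes from the terms $X^2$ and $Y^2$ when $m$ is even, and only from the term $2XY$ when $m$ is odd.

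The two cases then go as follows.
\begin{itemize}
\item If $m$ is even, the $X^2$ term pairs with $i=m/2-1$ and the $Y^2$ term pairs with $i=m/2$. The coefficient is
\[
 (-1)^{m/2-1}\binom{m-1}{m/2-1}+(-1)^{m/2}\binom{m-1}{m/2}=0,
\]
by the symmetry $\binom{m-1}{m/2-1}=\binom{m-1}{m/2}$.
\item If $m=2k+1$, only $2XY$ contributes, and it pairs with $i=k$. This gives the coefficient $2(-1)^k\binom{2k}{k}$, which is nonzero in characteristic zero.
\end{itemize}

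Finally, since $V_{2m}(2)$ is spanned by $v_2$ and $V_{2m}(0)$ by $v_0$, the projection of $sV_{2m}(2)$ to $V_{2m}(0)$ is nonzero exactly when this coefficient is nonzero, that is, exactly when $m$ is odd.

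The only step needing care is the even-$m$ cancellation. The factorization through $(Y^2-X^2)^{m-1}$ turns it into the one-line binomial symmetry above, so I expect no real obstacle.
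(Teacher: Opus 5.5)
Your proposal is correct and follows the paper's proof essentially step for step. Both use the same factorization $(X+Y)^{m+1}(Y-X)^{m-1}=(X+Y)^2(Y^2-X^2)^{m-1}$, the same parity split, and the same binomial symmetry for even $m$; the only cosmetic difference is that the paper dehomogenizes by setting $Y=1$ before extracting the coefficient.
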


\begin{proof}
The required coefficient is, up to the scalar used to lift $s$ from
$\operatorname{PGL}_2$,
\[
 C_m=[X^mY^m](X+Y)^{m+1}(Y-X)^{m-1}
\]
Setting $Y=1$ gives
\[
 C_m=[z^m](1+z)^{m+1}(1-z)^{m-1}
     =[z^m](1+z)^2(1-z^2)^{m-1}
\]
If $m=2k$, only the terms $1$ and $z^2$ in $(1+z)^2$ contribute, and
\[
 C_{2k}=(-1)^k\binom{2k-1}{k}
       +(-1)^{k-1}\binom{2k-1}{k-1}=0
\]
If $m=2k+1$, only the term $2z$ contributes, so
\[
 C_{2k+1}=2(-1)^k\binom{2k}{k}
\]
This is nonzero in characteristic zero.
\end{proof}

\begin{theorem}\label{thm:jacobi-rank}
There is a point $q^0\in(\Fb^\times)^\Delta$ at which
\begin{equation}\label{eq:jacobi-differential-rank}
 \rank d(\chi|_{\cQ})_{q^0}
 =\#\{i:m_i\text{ is odd}\}
\end{equation}
\end{theorem}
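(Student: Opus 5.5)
We choose the point $q^0$ where all $q_\alpha$ are equal after absorbing the $\gamma_\alpha$. Over $\Fb$ we rescale the root vectors (Chevalley rescaling, harmless geometrically) so that $\gamma_\alpha=1$. We then take $q^0_\alpha=1$ for all $\alpha\in\Delta$. Then $x:=Q(q^0)=e+f$, where $e=\sum_\alpha n_\alpha$ and $f=\sum_\alpha n_{-\alpha}$ form a principal $\slie_2$-triple $(e,h,f)$ with $h=2\rho^\vee$. (Strictly, $f$ must be rescaled by the coefficients of $2\rho^\vee$; we absorb this by choosing $q^0_\alpha$ equal to those coefficients instead of $1$.)

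The element $e+f$ is regular semisimple, being conjugate under the principal $\SL_2$ to $h$, which is regular. So \cref{lem:quotient-differential} applies. The tangent space to $\cQ$ at $q^0$ is $V=\bigoplus_{\alpha\in\Delta}\g_\alpha$, and the rank equals $\dim\operatorname{pr}_x(V)$ with $\g_x$ a Cartan subalgebra.

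Next we transport by the element $s$ of the principal $\SL_2$ with $\Ad(s)(e+f)=h$, which is a nonzero multiple of $h$. Since $\Ad(s)$ is an automorphism, $\Ad(s)[\g,x]=[\g,h]$ and $\Ad(s)\g_x=\g_h=\tT$, the Cartan $\Lie(A)$. So the rank is the dimension of the projection of $\Ad(s)V$ onto $\tT$ along the sum of root spaces, that is, onto the zero-weight space for $\ad h$.

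To compute this we use Kostant's decomposition $\g=\bigoplus_{i=1}^r V_{2m_i}$ into irreducible principal $\slie_2$-modules. The space $V$ is exactly the $\ad h$-weight-$2$ space $\g(2)$, since the simple root spaces are the height-one root spaces. Each $V_{2m_i}$ with $m_i\ge1$ contributes exactly one line $V_{2m_i}(2)$ to $\g(2)$. Because $\Ad(s)$ preserves each summand, and the projection to the $0$-weight space is compatible with the decomposition, the image is
\[
\bigoplus_i \operatorname{pr}_0\bigl(sV_{2m_i}(2)\bigr)\subset\bigoplus_i V_{2m_i}(0)=\tT .
\]
This is a direct sum of at most one line per summand. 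By \cref{lem:principal-rotation}, the line from $V_{2m_i}$ is nonzero iff $m_i$ is odd, which gives \eqref{eq:jacobi-differential-rank}.

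The main obstacle is identifying the action of $s$ on each $V_{2m}$ with the binary-form substitution of \cref{lem:principal-rotation}, up to scalar and basis normalizations. Concretely, $V_{2m}(2)$ and $V_{2m}(0)$ must correspond to $X^{m+1}Y^{m-1}$ and $X^mY^m$. We also need to check that the Cayley-type element $s$ (a $45^\circ$ rotation, followed by rescaling if $e+f$ must be normalized) takes $e+f$ to a multiple of $h$. A different normalization only changes $s$ by a torus element. A torus element rescales weight vectors, and we will check that this does not affect the vanishing of the coefficient. Finally, the projection along $[\g,h]$ is the same as projection onto the zero-weight space, because $[\g,h]$ is the sum of the nonzero weight spaces.
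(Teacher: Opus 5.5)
Your proposal is correct and follows essentially the same route as the paper. You take $q^0_\alpha=c_\alpha\gamma_\alpha^{-1}$ (with $2\rho^\vee=\sum c_\alpha\alpha^\vee$) so that $Q(q^0)=e+f$ for the principal triple, rotate $e+f$ to $h$ inside the principal $\SL_2$, and compute the projection of $s\g(2)$ to $\g(0)$ summand by summand in $\g=\bigoplus V_{2m_i}$ via \cref{lem:principal-rotation}. The one difference is that the paper builds $\gamma_\alpha^{-1}$ directly into $q^0$ rather than first rescaling to $\gamma_\alpha=1$; your rescaling is harmless, since it changes the family only by a constant torus conjugation and a rescaling of the $q_\alpha$.
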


\begin{proof}
Put $f=Q(0)$.  Take the usual principal element $h=2\rho^\vee$ and write
\[
 2\rho^\vee=\sum_{\alpha\in\Delta}c_\alpha\alpha^\vee
 \qquad c_\alpha>0
\]
With the Chevalley normalization above,
\[
 e=\sum_{\alpha\in\Delta}
      c_\alpha\gamma_\alpha^{-1}n_\alpha
\]
satisfies $[e,f]=h$, $[h,e]=2e$, and $[h,f]=-2f$.  Thus $(e,h,f)$ is the
principal triple adapted to the chosen Borel.  Put
$q^0_\alpha=c_\alpha\gamma_\alpha^{-1}$; every $q^0_\alpha$ is nonzero and
$Q(q^0)=e+f$.  Inside the principal $\SL_2$, the element $e+f$ is conjugate
to $h$; choose $s$ as in \cref{lem:principal-rotation}.

The principal grading decomposes $\mathfrak{g}$ by $h$'s eigenvalues:
\[
 T_{Q(q^0)}\cQ=\bigoplus_{\alpha\in\Delta}\g_\alpha=\g(2)
\]
One can also decompose $\mathfrak{g}$ into the irreducible representations of this principal $SL_2$: 
\[
 \g=\bigoplus_{i=1}^r V_{2m_i}
\]
Each summand has a one-dimensional weight-two space and a one-dimensional
weight-zero space, and
\[
 \g_h=\g(0)=\bigoplus_{i=1}^r V_{2m_i}(0)
\]
By \cref{lem:quotient-differential}, the rank of the restricted differential
is the dimension of the projection of $s\g(2)$ to $\g(0)$.  The principal
$\SL_2$ preserves every summand $V_{2m_i}$, so this projection is diagonal
with respect to the preceding direct sums.  By
\cref{lem:principal-rotation}, the contribution of $V_{2m_i}$ has rank one
exactly when $m_i$ is odd.  This proves
\eqref{eq:jacobi-differential-rank}.
\end{proof}

Let $\vartheta$ be the involution induced by $\theta$ on
$\cC_G=\g/\!/G$.  Choose homogeneous generators of $F[\g]^G$ of degrees
$m_i+1$.  Since $\theta$ acts as $-1$ on the Cartan $\Lie(A)$, the Chevalley
restriction theorem gives
\[
 \vartheta^*P=(-1)^{\deg P}P
\]
for every homogeneous invariant polynomial $P$.  Denote the fixed-point
locus of $\vartheta$ by $\cC_G^\vartheta$.

\begin{lemma}\label{lem:odd-exponents-rank-h}
For a split symmetric pair,
\[
 \#\{i:m_i\text{ is odd}\}=\rank H
\]
Consequently,
\[
 \dim\cC_G^\vartheta=\rank H
\]
\end{lemma}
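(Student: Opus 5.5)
We would prove the second assertion first, since it is formal, and then reduce the equality $\#\{i:m_i\text{ odd}\}=\rank H$ to a computation inside the principal $\slie_2$.

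\textbf{The fixed locus.} By Chevalley, $\cC_G$ is an affine space with coordinates $P_1,\dots,P_r$, where $\deg P_i=m_i+1$. The displayed formula gives $\vartheta^*P_i=(-1)^{m_i+1}P_i$, so $\vartheta$ is linear and diagonal in these coordinates. Hence $\cC_G^\vartheta$ is the linear subspace cut out by $P_i=0$ for $m_i$ even. Its dimension is $\#\{i:m_i\text{ odd}\}$, so the second assertion follows from the first.

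\textbf{Reduction to a Chevalley involution adapted to a principal $\slie_2$.} Both sides are geometric, so we work over $\Fb$. By \cref{thm:split-symmetric-equivalences}(e) we may replace $\theta$ by a conjugate Chevalley involution $\theta_0$, since $\rank H$ is unchanged by conjugation. We choose $\theta_0$ with $\theta_0(e_\alpha)=-f_\alpha$ for a Chevalley basis, and take $e=\sum c_\alpha e_\alpha$, $f=\sum c_\alpha f_\alpha$, $h=2\rho^\vee$ as in the proof of \cref{thm:jacobi-rank}. Then $\theta_0(e)=-f$ and $\theta_0(h)=-h$, so $\theta_0$ normalizes the principal $\slie_2$ and induces on it its own Chevalley involution. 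That involution is $\Ad(\sigma)$ with $\sigma=\exp\bigl(\tfrac{\pi}{2}(e-f)\bigr)$, a Weyl element of the principal $\SL_2$. Let $\rho$ denote the action of the principal $\SL_2$ on $\g$. Then $\psi=\theta_0\circ\rho(\sigma)^{-1}$ commutes with the principal $\slie_2$, so it preserves each isotypic component of $\g=\bigoplus V_{2m_i}$.

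We determine $\psi$ on the component of type $V_{2m}$ by restricting to its $h$-weight-zero space. That space is a copy of the multiplicity space and lies in the Cartan $\g_h$, where $\theta_0=-1$. On a weight-zero vector of $V_{2m}$ the Weyl element acts by $(-1)^m$, up to the normalization fixed by the projective lift, as in \cref{lem:principal-rotation}. Hence $\psi=(-1)^{m+1}$ as a scalar on the whole isotypic component. This is the point we expect to need the most care: repeated exponents (type $D_{2m}$) force us to work with isotypic components rather than individual summands, and the sign of $\sigma$ on weight-zero vectors must be fixed consistently.

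\textbf{Counting via the regular semisimple element $x=e-f$.} Since $\theta_0(x)=x$, we have $x\in\h$. Moreover $x$ is conjugate to a multiple of $h$, so it is regular semisimple in $\g$. Therefore $\h_x=\g_x\cap\h$ is abelian, consists of semisimple elements, and contains every Cartan subalgebra of $\h$ through $x$. Hence $\h_x$ is a Cartan subalgebra of $\h$, and $\rank H=\dim\g_x^{\theta_0}$.

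Now $\g_x=\bigoplus_i V_{2m_i}^{x=0}$, with one line in each summand. The element $\sigma$ lies in the one-parameter group generated by $x$, so $\rho(\sigma)$ is trivial on $\g_x$. Consequently $\theta_0=\psi$ acts on the line coming from $V_{2m_i}$ by $(-1)^{m_i+1}$. That line is fixed exactly when $m_i$ is odd, which gives $\rank H=\#\{i:m_i\text{ odd}\}$.
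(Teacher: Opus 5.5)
Your argument is correct, and it takes a genuinely different route from the paper. The paper proves the count by classification. The statement is additive over simple factors, and for each simple type the paper compares the rank column of \cref{tab:classification} with the standard list of exponents. The second assertion is then the same remark you make: $\cC_G^\vartheta$ is cut out by the odd-degree basic invariants.

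You instead give a uniform, classification-free proof. You conjugate to a Chevalley involution $\theta_0$ that normalizes a principal $\slie_2$. The automorphism $\psi=\theta_0\circ\rho(\sigma)^{-1}$ then commutes with that $\slie_2$. Schur's lemma on isotypic components handles the repeated exponent in type $D_{2m}$. Combined with $\theta_0=-1$ and $\rho(\sigma)=(-1)^m$ on weight-zero vectors, it forces $\psi=(-1)^{m+1}$ on the $V_{2m}$-isotypic part. Evaluating $\theta_0=\psi$ on the Cartan subalgebra $\g_x$, with $x=e-f\in\h$, gives $\rank\h=\dim\g_x^{\theta_0}=\#\{i:m_i\text{ odd}\}$.

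What your route buys:
\begin{itemize}
\item It depends neither on the correctness of the table nor on the reduction to simple factors.
\item When all exponents are odd, it shows the Chevalley involution equals the inner automorphism $\rho(\sigma)$. This automorphism acts by $-1$ on $\g(0)$, hence $-1\in W$.
\item This explains conceptually the equivalence of equal rank, odd exponents and $-1\in W$, which the paper obtains only from the classification.
\end{itemize}
The paper's route is shorter once the tabulated data are accepted.

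Two small repairs are needed.
\begin{itemize}
\item With coefficients $c_\alpha$ on both $e$ and $f$ you get $[e,f]=\sum c_\alpha^2\alpha^\vee\neq h$. Use coefficients $\sqrt{c_\alpha}$, available over $\Fb$, to get both $[e,f]=h$ and $\theta_0(e)=-f$. The triple in the proof of \cref{thm:jacobi-rank} does not satisfy $\theta(e)=-f$ in general, so ``as in'' that proof must be adjusted.
\item Replace $\exp(\tfrac{\pi}{2}(e-f))$, which is not algebraic, by $\sigma=\exp(e)\exp(-f)\exp(e)$. This is the image of $\left(\begin{smallmatrix}0&1\\-1&0\end{smallmatrix}\right)$. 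It lies in the one-dimensional torus $Z_{\SL_2}(e-f)$ and therefore acts trivially on $\g_x$, which is all you use.
\end{itemize}
The sign you flag is not actually delicate. Any element of $\SL_2$ that normalizes and inverts the diagonal torus acts by $(-1)^m$ on $V_{2m}(0)$, and the same $\sigma$ is used in both steps.
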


\begin{proof}
The assertion is additive under products.  For a simple factor it follows
from the classification in \cref{tab:classification} and the standard lists
of exponents.  In type $A_{n-1}$ the odd exponents among
$1,\ldots,n-1$ number $\lfloor n/2\rfloor$; in types $B_n$ and $C_n$ all
$n$ exponents are odd.  In type $D_n$ the number is
$2\lfloor n/2\rfloor$.  For $E_6,E_7,E_8,F_4,G_2$ the numbers are,
respectively, $4,7,8,4,2$.  These are the ranks of the fixed
algebras in \cref{tab:classification}.  The second assertion follows because
$\cC_G^\vartheta$ is obtained by setting the odd-degree basic invariants equal
to zero.
\end{proof}

\begin{corollary}[The $\theta$-fixed part of the quotient]
\label{cor:jacobi-fixed-quotient}
The restriction $\chi|_{\cQ}$ factors through $\cC_G^\vartheta$, and
\[
 \cQ\longrightarrow\cC_G^\vartheta
\]
is dominant and generically smooth of relative dimension
$\rank G-\rank H$.  It is generically \'etale exactly in the equal-rank cases.
\end{corollary}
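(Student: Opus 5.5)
We prove the three assertions in the order stated: the factorization through $\cC_G^\vartheta$, then dominance with the fiber dimension, then the étale criterion.

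\textbf{Step 1: factorization.} By \cref{lem:theta-Q}, $\theta(Q(q))=\Ad(a(q))Q(q)$ for $q\in\Gm^\Delta$. Hence $\theta(Q(q))$ and $Q(q)$ lie in the same geometric adjoint orbit, and
\[
 \chi(\theta(Q(q)))=\chi(Q(q)).
\]
The involution $\vartheta$ on $\cC_G$ is characterized by $\chi\circ\theta=\vartheta\circ\chi$. So $\chi(Q(q))$ is $\vartheta$-fixed on the dense torus $\Gm^\Delta\subset\A^\Delta=\cQ$. The fixed locus $\cC_G^\vartheta$ is closed, so this persists on all of $\cQ$. Concretely, each odd-degree basic invariant $P$ satisfies $P(Q(q))=\vartheta^*P(Q(q))=-P(Q(q))$ and therefore vanishes identically on $\cQ$. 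By the proof of \cref{lem:odd-exponents-rank-h}, $\cC_G^\vartheta$ is the linear subspace cut out by these invariants, so it is an affine space. It is smooth and irreducible of dimension $\rank H$.

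\textbf{Step 2: dominance and fiber dimension.} By \cref{thm:jacobi-rank}, at $q^0$ the differential $d(\chi|_\cQ)$ has rank $\#\{i:m_i \text{ odd}\}$, which equals $\rank H$ by \cref{lem:odd-exponents-rank-h}. Since the map lands in $\cC_G^\vartheta$, this differential is surjective onto $T\cC_G^\vartheta$ at $q^0$.

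The rank of the differential is lower semicontinuous, so it is maximal on a nonempty Zariski open subset of the irreducible variety $\cQ\simeq\A^\Delta$. On that open set the map to the smooth target is smooth, since we are in characteristic zero and the differential is surjective. Smooth morphisms are open, so the image contains an open subset and the map is dominant. The relative dimension is
\[
 \dim\cQ-\dim\cC_G^\vartheta=\abs{\Delta}-\rank H=\rank G-\rank H.
\]

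\textbf{Step 3: the étale criterion.} A generically smooth dominant morphism of relative dimension $d$ is generically étale if and only if $d=0$. Here $d=\rank G-\rank H$, so the map is generically étale exactly when $\rank H=\rank G$.

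The only delicate point is that \cref{thm:jacobi-rank} is proved over $\Fb$ at a single point. Generic smoothness and dominance are geometric properties, so base change to $\Fb$ suffices.
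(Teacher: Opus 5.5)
Your proposal is correct and follows the paper's route. The paper gets the factorization from the conjugacy of $Q(q)$ and $\theta(Q(q))$ in \cref{lem:theta-Q}, the target dimension $\rank H$ from \cref{lem:odd-exponents-rank-h}, and full rank of the differential at $q^0$ from \cref{thm:jacobi-rank}, then says the remaining assertions follow; your closure argument from the torus, the semicontinuity and openness step, and the ``\'etale iff relative dimension zero'' remark fill in those omitted standard details correctly.
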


\begin{proof}
By \cref{lem:theta-Q}, $Q(q)$ and $\theta(Q(q))$ are $G$-conjugate.  Their
images in the adjoint quotient are therefore equal, so $\chi|_{\cQ}$ lands in $\cC_G^\vartheta$.  The target has dimension $\rank H$ by
\cref{lem:odd-exponents-rank-h}, and \cref{thm:jacobi-rank} shows that the
differential has this rank at $q^0$.  The source has dimension $\rank G$.
The remaining assertions follow.
\end{proof}

Equivalently, every homogeneous invariant of odd degree vanishes on $\cQ$.
In the non-equal-rank types this is not an obstruction to the fixed-side
construction: it identifies the correct quotient.  The family has dense image in the
fixed part $\cC_G^\vartheta$, rather than in all of $\cC_G$.

\subsection{\texorpdfstring{From the $G$-quotient to the $H$-quotient}{From the G-quotient to the H-quotient}}

The inclusion $\h\hookrightarrow\g$ induces a morphism
\[
 \pi_H:\h/\!/H\longrightarrow \cC_G^\vartheta
\]
We first study this map geometrically.  After base change to $\Fb$, the
squaring map
\[
 [2]:A_{\Fb}\longrightarrow A_{\Fb}
\]
is surjective.  Hence, for every $q\in(\Fb^\times)^\Delta$, we may choose
$t\in A(\Fb)$ with
\[
 t^2=a(q)
\]
Since $A$ is $\theta$-split, $\theta(t)=t^{-1}$, and therefore
\[
 \tau(t)=\theta(t)^{-1}t=t^2=a(q)
\]
By \cref{lem:theta-Q,lem:formal-descent},
\[
 Y(q):=\Ad(t)Q(q)\in\h(\Fb)
\]
and $Y(q)$ is $G(\Fb)$-conjugate to $Q(q)$.  Thus
\[
 \chi_G(Y(q))=\chi_G(Q(q))
\]
The points $\chi_G(Q(q))$ are Zariski dense in $\cC_G^\vartheta$ by
\cref{cor:jacobi-fixed-quotient}.  It follows that
\[
 (\h/\!/H)_{\Fb}\longrightarrow(\cC_G^\vartheta)_{\Fb}
\]
is dominant, and therefore so is
\[
 \pi_H:\h/\!/H\longrightarrow\cC_G^\vartheta
\]
over $F$.

Both sides have dimension $\rank H$, so $\pi_H$ is generically finite.  In
the equal-rank case $\cC_G^\vartheta=\cC_G$, and the Jacobi family gives a
generically \'etale multisection of the full adjoint quotient.

This dominance argument is geometric: over $\Fb$, a square root of $a(q)$
always exists and produces an element of $\h$ with the same $G$-invariants as
$Q(q)$.  Rational descent is a separate question.  Over $F$, the Kummer class
of $a(q)$ determines whether such a Cartan lift exists and, when it does, the
twisted square-root branch on which it is realized; see
\cref{thm:kummer-criterion,prop:twisted-cartan-charts}.

The two rank defects appearing above agree:
\begin{equation}\label{eq:two-rank-defects}
\dim\cQ-\dim\cC_G^\vartheta
=\rank G-\rank H
=\dim(T_H\backslash T_G)
\end{equation}
where $T_H\subset T_G$ are the regular centralizer tori of
\cref{eq:residual-torus-dimension}. Thus, in the non-equal-rank case, the
relative dimension of the Jacobi map agrees with the geometric dimension of
the residual centralizer torus occurring in the fixed-side orbital integral.
When this torus is $F$-split, the same number is the rank of its valuation
lattice.  Equation \eqref{eq:two-rank-defects} is only a numerical comparison;
the detailed interaction of the two integrations belongs to the full
fixed-side expansion.

\begin{example}[The $A_2$ descendant]\label{ex:A2-descendant}
For $(\slie_3,\solie_3)$ take the standard simple-root vectors and
$\gamma_{\alpha_1}=\gamma_{\alpha_2}=1$.  Then
\[
 Q(q_1,q_2)=
 \begin{pmatrix}
 0&q_1&0\\
 1&0&q_2\\
 0&1&0
 \end{pmatrix}
 \qquad
 \det(\lambda-Q)=\lambda^3-(q_1+q_2)\lambda
\]
The adjoint quotient of $\slie_3$ has basic invariants of degrees $2$ and
$3$, while the cubic invariant vanishes on $\solie_3$.  Hence
$\cC_G^\vartheta$ is the degree-two line, and $\chi|_{\cQ}$ has one-dimensional
fibers over it.  For a common regular element,
$\dim(T_H\backslash T_G)=2-1=1$.  Thus the simplest non-equal-rank
descendant realizes the numerical equality in
\eqref{eq:two-rank-defects}: the one-dimensional fiber of the Jacobi map
matches the geometric dimension of the residual centralizer torus.
\end{example}

\section{Jacobi descent charts and logarithmic quotient coordinates}
\label{sec:jacobi-logarithmic}

We now assemble the preceding results in the equal-rank case and prove the
measure identity that makes the Jacobi parameters particularly well adapted
to the fixed-side calculation.  Throughout this section $F$ is
non-Archimedean of characteristic zero and residue characteristic different
from $2$, and $(G,H,\theta)$ is an equal-rank split symmetric pair.  We work
first with the adjoint semisimple derived group and the $F$-split
$\theta$-split maximal torus $A$ fixed in Section~5.  The harmless central
modifications are as in \cref{prop:adjoint-reduction}.

The principal observation is that the square-root cover introduced by the
Cartan lift is not merely a device for rational descent.  It linearizes the
Jacobi family itself.  Recall from \eqref{eq:a-q} that $a(q)\in A$ is
characterized by $\alpha(a(q))=q_\alpha^{-1}$.  If $t^2=a(q)$, write
$z=(z_\alpha)_{\alpha\in\Delta}$ with
\begin{equation}\label{eq:z-square-root-coordinates}
 z_\alpha=\alpha(t)^{-1}
 \qquad q_\alpha=z_\alpha^2
\end{equation}
Then with
\[
 x_\alpha=n_\alpha+\gamma_\alpha n_{-\alpha}\in\h
\]
one has
\begin{equation}\label{eq:linear-fixed-jacobi}
 Y(z):=\Ad(t)Q(z^2)
 =\sum_{\alpha\in\Delta}z_\alpha x_\alpha
\end{equation}
Thus the $\theta$-fixed family is linear in the square-root Jacobi coordinates.
The determinant calculation below shows that these same coordinates make the
$H^\circ$-relative quotient density exactly logarithmic; the finite passage
to the full $H$-quotient is recorded separately below.

\subsection{Height parity and the relative determinant}

Work temporarily over $\Fb$.  Choose root vectors $e_\beta\in\g_\beta$ for
$\beta\in\Phi^+$ and put
\[
 f_\beta=\theta(e_\beta)\in\g_{-\beta},\qquad
 x_\beta=e_\beta+f_\beta\in\h,\qquad
 y_\beta=e_\beta-f_\beta\in\p
\]
For the simple roots we take $e_\alpha=n_\alpha$, so that
$f_\alpha=\gamma_\alpha n_{-\alpha}$ and \eqref{eq:linear-fixed-jacobi}
has the displayed form.

Let
\[
 \Phi_{\mathrm{ev}}^+
 =\{\beta\in\Phi^+:\operatorname{ht}(\beta)\text{ is even}\}
\]
and
\[
 \Phi_{\mathrm{odd}}^{+,*}
 =\{\gamma\in\Phi^+:\operatorname{ht}(\gamma)>1
       \text{ is odd}\}
\]
Put
\[
 E_\h=\bigoplus_{\beta\in\Phi_{\mathrm{ev}}^+}\Fb x_\beta,
 \qquad
 O_\h=\bigoplus_{\gamma\in\Phi_{\mathrm{odd}}^{+,*}}\Fb x_\gamma
\]
and define $E_\p,O_\p$ in the same way using $y_\beta,y_\gamma$.  Finally put
\[
 V_\p=\bigoplus_{\alpha\in\Delta}\Fb y_\alpha
\]

\begin{lemma}[Parity count]\label{lem:height-parity-count}
In the equal-rank case all exponents $m_i$ of $\g$ are odd.  If
\[
 N=\abs{\Phi_{\mathrm{ev}}^+}
\]
then
\[
 \abs{\Phi_{\mathrm{odd}}^{+,*}}=N
\]
and
\begin{equation}\label{eq:height-parity-sum}
 \sum_{\gamma\in\Phi_{\mathrm{odd}}^{+,*}}
       \operatorname{ht}(\gamma)
 -\sum_{\beta\in\Phi_{\mathrm{ev}}^+}
       \operatorname{ht}(\beta)
 =N
\end{equation}
\end{lemma}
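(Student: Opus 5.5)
The plan is to work entirely with the height distribution of positive roots, using Kostant's theorem on the dual partition. For $k\ge1$ let $r_k$ be the number of positive roots of height $k$. The sequence $(r_k)$ is weakly decreasing. Moreover $r_k-r_{k+1}$ equals the number of exponents equal to $k$; equivalently, the partition $(r_1,r_2,\dots)$ is dual to the partition formed by the exponents. This is the principal-$\slie_2$ decomposition $\g=\bigoplus V_{2m_i}$ already used in the proof of \cref{thm:jacobi-rank}. Under $h=2\rho^\vee$ the weight $2k$ space is $\bigoplus_{\operatorname{ht}\beta=k}\g_\beta$, and $V_{2m_i}$ contributes one dimension to it exactly when $m_i\ge k$. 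Hence
\[
 r_k=\#\{i:m_i\ge k\}.
\]

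First I will show that all exponents are odd. By \cref{lem:odd-exponents-rank-h}, the number of odd exponents is $\rank H$. In the equal-rank case this equals $\rank G=r$, so every $m_i$ is odd. Equivalently, I could use $-1\in W$, which forces every basic invariant to have even degree $m_i+1$.

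Next I will compare consecutive heights. For $j\ge1$, the difference $r_{2j}-r_{2j+1}$ counts the exponents equal to $2j$, and there are none. Hence $r_{2j}=r_{2j+1}$ for all $j\ge1$. The odd heights greater than $1$ are $3,5,7,\dots$, and they pair bijectively with the even heights $2,4,6,\dots$ via $2j\mapsto 2j+1$, with the same root counts. Summing over $j$ gives
\[
 \abs{\Phi_{\mathrm{odd}}^{+,*}}=\sum_{j\ge1}r_{2j+1}=\sum_{j\ge1}r_{2j}=N.
\]

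Finally I will compute the weighted difference with the same pairing:
\[
 \sum_{j\ge1}\bigl((2j+1)r_{2j+1}-2j\,r_{2j}\bigr)=\sum_{j\ge1}r_{2j+1}=N.
\]
This is \eqref{eq:height-parity-sum}.

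The only substantive input is the identity $r_k=\#\{i:m_i\ge k\}$. It is Kostant's theorem, but it can also be read off directly from the principal grading described above, so I would cite it in that form. The remaining steps are bookkeeping. A sanity check is type $C_2$, with exponents $1,3$. The heights are $1,1,2,3$, so $N=1$ and $\Phi_{\mathrm{odd}}^{+,*}$ has one element, and the difference of heights is $3-2=1$, as predicted.
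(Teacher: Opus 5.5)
Your proof is correct and takes essentially the same route as the paper: both rest on $r_k=\#\{i:m_i\ge k\}$, read off from the principal $\slie_2$ grading, together with \cref{lem:odd-exponents-rank-h}. The only difference is bookkeeping. The paper sums the contribution of each exponent $m_i=2k+1$ separately, which gives $k$ even heights, $k$ non-simple odd heights, and a height difference of $k$. You instead sum level by level using $r_{2j}=r_{2j+1}$, which is the same double count with the order of summation swapped.
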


\begin{proof}
By \cref{lem:odd-exponents-rank-h}, equal rank means that every exponent is
odd.  The number of positive roots of height $j$ is
$\#\{i:m_i\geq j\}$.  Indeed, under the principal element
$h=2\rho^\vee$, a root space of height $j$ has weight $2j$, while the
summand $V_{2m_i}$ contains weight $2j$ exactly when $m_i\geq j$.  An exponent $m_i=2k+1$ contributes $k$ even heights and
$k+1$ odd heights, the extra odd height being $1$.  Removing the $r$ simple
roots therefore leaves equally many odd and even roots.  The contribution of
this exponent to the left side of \eqref{eq:height-parity-sum} is
\[
 (3+5+\cdots+(2k+1))-(2+4+\cdots+2k)=k
\]
which is also its contribution to $N$.
\end{proof}

Fix bases in the preceding root spaces and set
\begin{equation}\label{eq:parity-minor}
 R(z)=\det\left(
 \ad Y(z):O_\h\longrightarrow E_\h
 \right)
\end{equation}
The determinant depends on the chosen root-vector normalizations only by a
nonzero constant.

\begin{lemma}[Comparison of the parity blocks]\label{lem:parity-block-comparison}
Under the identifications $x_\beta\leftrightarrow y_\beta$ and
$x_\gamma\leftrightarrow y_\gamma$, the matrices of
\[
 \ad Y:O_\h\longrightarrow E_\h
 \qquad\text{and}\qquad
 \ad Y:O_\p\longrightarrow E_\p
\]
are identical, up to the fixed choices of bases in the root lines.  In
particular their determinants differ by a nonzero constant independent of
$z$.
\end{lemma}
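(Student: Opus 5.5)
Since $Y(z)=\sum_{\alpha\in\Delta}z_\alpha x_\alpha$ is linear, I would compute $[x_\alpha,x_\gamma]$ and $[x_\alpha,y_\gamma]$ for a simple root $\alpha$ and a root $\gamma$ of odd height $>1$, and project them to $E_\h$ and $E_\p$. Write $f_\beta=\theta(e_\beta)$. For a positive root $\beta$ and an arbitrary root $\delta$, applying $\theta$ to a bracket $[e_\beta,e_\delta]=N_{\beta,\delta}e_{\beta+\delta}$ gives $[f_\beta,\theta(e_\delta)]=N_{\beta,\delta}\theta(e_{\beta+\delta})$. Thus the bracket structure is $\theta$-symmetric. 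Expanding,
\[
 [x_\alpha,x_\gamma]=[e_\alpha,e_\gamma]+[f_\alpha,f_\gamma]+[e_\alpha,f_\gamma]+[f_\alpha,e_\gamma].
\]
The first two terms are $\theta$-swaps of each other. The last two are also mutually $\theta$-related, since $\theta[e_\alpha,f_\gamma]=[f_\alpha,e_\gamma]$.

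\textbf{Step 1: compute each bracket.} For $\gamma$ of odd height $>1$, the root $\gamma+\alpha$ is positive of even height. The root $\gamma-\alpha$, if it is a root, is positive of even height $\geq 2$. Both roots land in $E$. I set $P=[e_\alpha,e_\gamma]\in\g_{\gamma+\alpha}$ and $M=[e_\alpha,f_\gamma]\in\g_{\alpha-\gamma}$, a negative root space. Since $\theta$ is an involution,
\[
 [x_\alpha,x_\gamma]=(P+\theta P)+(M+\theta M).
\]
Similarly, since $y_\gamma=e_\gamma-f_\gamma$,
\[
 [x_\alpha,y_\gamma]=[e_\alpha,e_\gamma]-[f_\alpha,f_\gamma]-[e_\alpha,f_\gamma]+[f_\alpha,e_\gamma]=(P-\theta P)-(M-\theta M).
\]
Write $P=c\,e_{\gamma+\alpha}$, so that $P+\theta P=c\,x_{\gamma+\alpha}$ and $P-\theta P=c\,y_{\gamma+\alpha}$. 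Write $M=d\,f_{\gamma-\alpha}$, so that $\theta M=d\,e_{\gamma-\alpha}$. Then $M+\theta M=d\,x_{\gamma-\alpha}$, while $-(M-\theta M)=d\,y_{\gamma-\alpha}$.

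\textbf{Step 2: compare the matrices.} By Step 1 the coefficients of $x_\beta$ in $\ad Y(x_\gamma)$ and of $y_\beta$ in $\ad Y(y_\gamma)$ agree exactly, entry by entry. Both equal $z_\alpha c$ or $z_\alpha d$, with the same structure constants. The only care needed is the sign bookkeeping for $M$: one must check that $f_{\gamma-\alpha}$ is indeed $\theta(e_{\gamma-\alpha})$ with the convention fixed above. Also, no terms leave $E$: the brackets also produce no $\Lie(A)$ component, because $\gamma\neq\pm\alpha$.

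\textbf{Step 3: account for normalizations.} If the root vectors are rescaled, the $x$ and $y$ bases change by the same diagonal constants. The determinants then differ only by a $z$-independent nonzero constant.

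The main obstacle is the sign in the $M$ term, that is, verifying that $-(M-\theta M)$ rather than $M-\theta M$ appears and that it equals $+d\,y_{\gamma-\alpha}$. This is straightforward once $\theta^2=1$ and $f_\beta=\theta(e_\beta)$ are used consistently.
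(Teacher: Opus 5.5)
Your proposal is correct and follows essentially the same route as the paper: you expand $[x_\alpha,x_\gamma]$ and $[x_\alpha,y_\gamma]$, use that $\theta$ carries $[e_\alpha,e_\gamma]$ to $[f_\alpha,f_\gamma]$ and $[e_\alpha,f_\gamma]$ to $[f_\alpha,e_\gamma]$, and read off that the $x_{\gamma\pm\alpha}$- and $y_{\gamma\pm\alpha}$-coefficients coincide, and your sign check $-(M-\theta M)=d\,y_{\gamma-\alpha}$ is right. The paper handles the $\gamma-\alpha$ case through $[f_\alpha,e_\gamma]=M_{\alpha,\gamma}e_{\gamma-\alpha}$ rather than your $M=[e_\alpha,f_\gamma]$; applying $\theta$ shows this is the same constant.
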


\begin{proof}
It is enough to compare one simple-root summand $x_\alpha$ of $Y$ on one
positive odd root $\gamma$ of height greater than one.  If
$\beta=\gamma+\alpha$ is a root and
$[e_\alpha,e_\gamma]=N_{\alpha,\gamma}e_\beta$, then applying $\theta$ gives
$[f_\alpha,f_\gamma]=N_{\alpha,\gamma}f_\beta$.  Hence the $\beta$-components
of the two brackets are
\[
 [x_\alpha,x_\gamma]_\beta
   =N_{\alpha,\gamma}x_\beta
 \qquad
 [x_\alpha,y_\gamma]_\beta
   =N_{\alpha,\gamma}y_\beta
\]
If instead $\beta=\gamma-\alpha$ is a positive root and
$[f_\alpha,e_\gamma]=M_{\alpha,\gamma}e_\beta$, applying $\theta$ gives
$[e_\alpha,f_\gamma]=M_{\alpha,\gamma}f_\beta$, and again
\[
 [x_\alpha,x_\gamma]_\beta
   =M_{\alpha,\gamma}x_\beta
 \qquad
 [x_\alpha,y_\gamma]_\beta
   =M_{\alpha,\gamma}y_\beta
\]
These are the only possible even-height root components.  Summing with
coefficients $z_\alpha$ proves the assertion.
\end{proof}

\begin{lemma}[The relative parity determinant]\label{lem:relative-parity-determinant}
With the preceding notation,
\begin{equation}\label{eq:p-parity-determinant}
 \det\left(
 \ad Y(z):V_\p\oplus O_\p
 \longrightarrow \Lie(A)\oplus E_\p
 \right)
 =c\left(\prod_{\alpha\in\Delta}z_\alpha\right)R(z)
\end{equation}
for a nonzero constant $c$.  Consequently, on the common regular-semisimple locus,
\begin{equation}\label{eq:relative-discriminant-parity}
 D_H^G(Y(z))
 =c'\left[
       \left(\prod_{\alpha\in\Delta}z_\alpha\right)R(z)
      \right]^2
\end{equation}
\end{lemma}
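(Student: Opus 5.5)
The plan is to exploit the height grading of $\p$ together with the fact that each summand $z_\alpha x_\alpha$ of $Y(z)$ changes heights by exactly $\pm1$. Write
\[
 \p_{\mathrm{odd}}=V_\p\oplus O_\p,
 \qquad
 \p_{\mathrm{ev}}=\Lie(A)\oplus E_\p ,
\]
where $\Lie(A)$ is the height-zero part. Since $Y(z)\in\h$, $\ad Y(z)$ preserves $\p$, and by the height shift it maps $\p_{\mathrm{odd}}$ to $\p_{\mathrm{ev}}$ and $\p_{\mathrm{ev}}$ to $\p_{\mathrm{odd}}$. The dimensions match: $\dim V_\p=r=\dim\Lie(A)$, and $\dim O_\p=N=\dim E_\p$ by \cref{lem:height-parity-count}. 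So the determinant in \eqref{eq:p-parity-determinant} is a square determinant.

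\textbf{Step 1: block triangularity.} Order the source as $(V_\p,O_\p)$ and the target as $(\Lie(A),E_\p)$. A vector $y_\gamma$ with $\operatorname{ht}(\gamma)\ge3$ is sent by $\ad x_\alpha$ into root spaces of height at least $2$. Hence the block $O_\p\to\Lie(A)$ vanishes, and the determinant factors as
\[
 \det\bigl(V_\p\to\Lie(A)\bigr)\cdot\det\bigl(O_\p\to E_\p\bigr).
\]

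\textbf{Step 2: the simple block.} For simple roots $\alpha,\alpha'$, the $\Lie(A)$-component of $[x_\alpha,y_{\alpha'}]$ is nonzero only if $\alpha=\alpha'$. In that case it equals
\[
 -[e_\alpha,f_\alpha]+[f_\alpha,e_\alpha]=-2\gamma_\alpha\alpha^\vee .
\]
So $y_\alpha\mapsto-2\gamma_\alpha z_\alpha\alpha^\vee$ plus terms in $E_\p$. The simple coroots form a basis of $\Lie(A)$ in characteristic zero, so this block is diagonal in suitable bases, with determinant a nonzero constant times $\prod_\alpha z_\alpha$.

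\textbf{Step 3: the odd block.} By \cref{lem:parity-block-comparison}, the block $O_\p\to E_\p$ has the same matrix as $O_\h\to E_\h$ up to fixed basis choices. Its determinant is therefore a constant multiple of $R(z)$, which proves \eqref{eq:p-parity-determinant}.

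\textbf{Step 4: the discriminant.} On the common regular semisimple locus, equal rank gives $\g_Y=\h_Y$, since both are Cartan subalgebras of dimension $r$. Hence $\p\cap\g_Y=0$, and
\[
 D_H^G(Y)=\det(\ad Y;\g/\g_Y)/\det(\ad Y;\h/\h_Y)=\det(\ad Y|_\p).
\]
Using the parity splitting, this equals, up to sign, $\det(\p_{\mathrm{odd}}\to\p_{\mathrm{ev}})\det(\p_{\mathrm{ev}}\to\p_{\mathrm{odd}})$. To relate the two factors, use the invariant form $B$. It pairs $\g_\beta$ only with $\g_{-\beta}$, so $B(y_\beta,y_{\beta'})=0$ unless $\beta=\beta'$, and $B(y_\beta,y_\beta)=-2B(e_\beta,f_\beta)\ne0$. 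Also $\Lie(A)\perp y_\beta$. Hence $B$ restricts nondegenerately to $\p_{\mathrm{odd}}$ and to $\p_{\mathrm{ev}}$ separately. Skewness $B(\ad Y\,u,v)=-B(u,\ad Y\,v)$ then shows that $\p_{\mathrm{ev}}\to\p_{\mathrm{odd}}$ is minus the $B$-adjoint of $\p_{\mathrm{odd}}\to\p_{\mathrm{ev}}$. So their determinants agree up to a constant built from Gram determinants, and \eqref{eq:relative-discriminant-parity} follows.

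I expect the main care to be needed in Step 1 and in the identification in Step 4. Step 1 needs the check that no height-$\geq 3$ vector reaches height zero. Step 4 needs $\g_Y=\h_Y$, so that $D_H^G$ really equals the determinant of $\ad Y$ on $\p$ and not on a quotient of $\p$. Both are forced by equal rank and the height shift.
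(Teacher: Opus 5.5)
Your proposal is correct and follows essentially the same route as the paper: block-triangularity with respect to height parity, where the simple block contributes $\prod_\alpha z_\alpha$ and the odd block contributes $R(z)$ via \cref{lem:parity-block-comparison}, followed by $\p_Y=0$ and the $B$-adjointness of the two off-diagonal blocks of $\ad Y|_\p$. The differences are cosmetic. You use the coroot basis of $\Lie(A)$, which makes the simple block diagonal, where the paper writes it as a Cartan matrix times $\diag(z_\alpha)$. You also spell out why $B$ is nondegenerate on $\p_{\mathrm{odd}}$ and on $\p_{\mathrm{ev}}$ separately, which the paper leaves implicit in its phrase ``constant Gram matrices.''
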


\begin{proof}
Since $Y$ is a sum of simple-root fixed vectors, $\ad Y$ changes root-height
parity.  Order the source in \eqref{eq:p-parity-determinant} as
$V_\p\oplus O_\p$ and the target as $\Lie(A)\oplus E_\p$. Only the simple vector $y_\alpha$ can contribute to the Cartan component through its bracket with $x_\alpha$, and
\[
[x_\alpha,y_\alpha]
=-2[e_\alpha,f_\alpha]
=-2\gamma_\alpha\alpha^\vee
\]
Hence the $V_\p\to\Lie(A)$ block is, up to the fixed diagonal factors
$-2\gamma_\alpha$, the differential of the simple-coroot isogeny
\[
\prod_{\alpha\in\Delta}\Gm\longrightarrow A
\qquad
(t_\alpha)_{\alpha\in\Delta}\longmapsto
\prod_{\alpha\in\Delta}\alpha^\vee(t_\alpha)
\]
followed by $\diag(z_\alpha)$. In simple-root coordinates its fixed factor is
the Cartan matrix. In particular, its determinant is a nonzero constant times
\[
\prod_{\alpha\in\Delta} z_\alpha
\]
Thus the $V_\p\to\Lie(A)$ block is a fixed invertible Cartan matrix times
$\diag(z_\alpha)$.  By \cref{lem:parity-block-comparison}, the
$O_\p\to E_\p$ block has determinant equal to a fixed nonzero constant times
$R(z)$.  The full matrix is block triangular, proving
\eqref{eq:p-parity-determinant}.

For a common regular element $Y$ of $\h$ and $\g$, equal rank gives
$\g_Y=\h_Y$ and hence $\p_Y=0$.  Therefore
\[
 D_H^G(Y)=\det(\ad Y;\p)
\]
The decomposition
\[
 \p=(V_\p\oplus O_\p)\oplus(\Lie(A)\oplus E_\p)
\]
is exchanged by $\ad Y$.  Invariance of the fixed bilinear form identifies
the two off-diagonal blocks as transposes up to constant Gram matrices.
Their determinants therefore agree up to a nonzero constant, giving
\eqref{eq:relative-discriminant-parity}.
\end{proof}

\subsection{The ambient Jacobi Jacobian}

Choose homogeneous basic invariants $P_1,\ldots,P_r$ of $\g$.  Their degrees
$m_i+1$ are all even.  Put
\begin{equation}\label{eq:ambient-jacobi-jacobian}
 J_G(q)=\det\left(
 \frac{\partial P_i(Q(q))}{\partial q_\alpha}
 \right)_{i,\alpha}
\end{equation}

We use Kostant's regularity theorem for the principal Hessenberg space.  With
the Borel chosen in Section~7 and $f=Q(0)$, every element of
$f+\mathfrak b$ is regular; equivalently, the principal Hessenberg affine
space lies in $\g_{\mathrm{reg}}$ \cite{KostantToda}.  In particular,
\begin{equation}\label{eq:jacobi-everywhere-regular}
 Q(q)\in\g_{\mathrm{reg}}
 \qquad\text{for every }q\in\A^\Delta
\end{equation}

Let $\varepsilon_{\mathrm{ht}}$ be the height-parity involution acting by
$(-1)^{\operatorname{ht}(\beta)}$ on $\g_\beta$ and trivially on $\Lie(A)$.
It exists because $G$ is adjoint and the simple roots form a basis of
$X^*(A)$.  Write $\g_{\mathrm{odd}}$ and $\g_{\mathrm{even}}$ for its
$-1$ and $+1$ eigenspaces.  Since $Q(q)$ is odd,
\[
 \ad Q(q):\g_{\mathrm{odd}}\longrightarrow\g_{\mathrm{even}}
\]
The parity count gives
\begin{equation}\label{eq:g-parity-dimension}
 \dim\g_{\mathrm{odd}}=\dim\g_{\mathrm{even}}+r
\end{equation}

\begin{lemma}[A cofactor lemma]\label{lem:cofactor-kernel}
Let $R_0$ be a polynomial ring over a field and let
$B:R_0^{m+r}\to R_0^m$ be a polynomial matrix of full row rank at every
geometric point.  Suppose $v_1,\ldots,v_r$ are polynomial sections which form
a basis of $\ker B$ at every geometric point.  Fix the standard bases.  Under the corresponding coordinate Hodge-star
identification
$\bigwedge^m(R_0^{m+r})^*\simeq\bigwedge^rR_0^{m+r}$, the wedge of the
$m$ row covectors of $B$ maps to the Pl\"ucker cofactor vector formed from
its maximal minors, and
\[
 v_1\wedge\cdots\wedge v_r
\]
is a nonzero constant multiple of this vector.
\end{lemma}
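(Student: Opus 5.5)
The plan is to reduce to a statement about the field of fractions and then use a unique-factorization argument. Let $K_0=\operatorname{Frac}(R_0)$ and let $P\in\bigwedge^r K_0^{m+r}$ be the Pl\"ucker cofactor vector. Its entries are, up to sign, the maximal $m\times m$ minors of $B$, and each such minor is placed at the coordinate $r$-vector complementary to the columns it uses.

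The first step is to show that, at every geometric point $x$, the values $P(x)$ and $v_1(x)\wedge\cdots\wedge v_r(x)$ span the same line. This is standard linear algebra. The wedge of the rows of $B(x)$ is a nonzero $m$-covector, nonzero because $B(x)$ has full row rank. Its coordinate Hodge star is a decomposable $r$-vector whose span is the annihilator of the row space, namely $\ker B(x)$. Cramer's rule, equivalently the Laplace expansion of $\det\bigl(\begin{smallmatrix}B\\ w\end{smallmatrix}\bigr)$, shows that $P(x)$ pairs to zero against each row of $B(x)$ in the appropriate sense. Hence $P(x)$ represents the Pl\"ucker point of $\ker B(x)$. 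The vectors $v_i(x)$ form a basis of $\ker B(x)$, so $v_1(x)\wedge\cdots\wedge v_r(x)$ is a nonzero multiple of $P(x)$.

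Next, pass to the generic point, where both objects are proportional: $v_1\wedge\cdots\wedge v_r=\lambda P$ with $\lambda\in K_0^\times$. To see that $\lambda$ is a unit, I would use that $R_0$ is a UFD. Write $\lambda=a/b$ in lowest terms with $a,b\in R_0$ coprime. Then $b\,(v_1\wedge\cdots\wedge v_r)=aP$, coordinate by coordinate. Suppose $b$ is not a unit, and pick an irreducible factor $\pi\mid b$. Since $\pi\nmid a$, $\pi$ divides every coordinate of $P$, so $P$ vanishes on $V(\pi)$, which is nonempty over the algebraic closure. That contradicts full row rank there, because some maximal minor must be nonzero. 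Symmetrically, if $a$ is not a unit, an irreducible $\pi\mid a$ divides every Pl\"ucker coordinate of $v_1\wedge\cdots\wedge v_r$. Then $v_1\wedge\cdots\wedge v_r$ vanishes on $V(\pi)$, contradicting the pointwise linear independence of the $v_i$. So $a$ and $b$ are units of $R_0$, that is, nonzero constants, and $\lambda$ is a nonzero constant.

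The main obstacle is to get the Hodge-star and sign conventions right, so that the Cramer identity places each minor at its correct complementary coordinate. Once that bookkeeping is fixed, the divisibility argument is routine. It uses only that units of a polynomial ring are constants and that proper principal ideals have nonempty zero loci over the algebraic closure (Nullstellensatz).
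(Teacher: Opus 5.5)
Your proof is correct, but it takes a different route from the paper's. The paper argues module-theoretically. Because the maximal minors of $B$ have no common geometric zero, they generate the unit ideal, so $B$ is surjective and $K=\ker B$ is a projective direct summand of rank $r$. The map $R_0^r\to K$ given by the $v_i$ is an isomorphism, since its kernel and cokernel have empty support. Hence $v_1\wedge\cdots\wedge v_r$ and the cofactor vector are two nowhere-vanishing generators of the rank-one module $\bigwedge^r K$, and their ratio is a unit of $R_0$, that is, a nonzero constant. You instead prove proportionality pointwise by linear algebra and pass to $\operatorname{Frac}(R_0)$. You then rule out zeros and poles of the ratio $\lambda=a/b$ by unique factorization and the Nullstellensatz: an irreducible factor of $b$ (respectively $a$) would force all maximal minors (respectively all Pl\"ucker coordinates of $v_1\wedge\cdots\wedge v_r$) to vanish on a nonempty hypersurface. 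Your version is more elementary, since it needs no projectivity, supports, or determinant lines. It also handles explicitly the fact that the cofactor vector lies in $\bigwedge^r K$, which the paper simply asserts from the pointwise statement. The paper's version never uses that $R_0$ is a UFD. It shows the ratio is a unit in any coordinate ring satisfying the hypotheses, and only the last step uses that units of a polynomial ring are constants. It also gives the extra fact that the $v_i$ form a global $R_0$-basis of $\ker B$. You could drop factorization altogether with the unit-ideal trick: writing $1=\sum_I c_I P_I$ gives $\lambda=\sum_I c_I\,(v_1\wedge\cdots\wedge v_r)_I\in R_0$, and symmetrically $\lambda^{-1}\in R_0$.
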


\begin{proof}
Let $K=\ker B$.  Since the maximal minors of $B$ have no common geometric
zero, they generate the unit ideal.  Thus $B:R_0^{m+r}\to R_0^m$ is
surjective.  As the target is free, the sequence
\[
 0\longrightarrow K\longrightarrow R_0^{m+r}
 \xrightarrow{B}R_0^m\longrightarrow0
\]
splits, so $K$ is a projective module of rank $r$.  Taking determinant lines
gives the canonical identity
\[
 \det(R_0^{m+r})\simeq \det(K)\otimes\det(R_0^m)
\]
which, after the standard bases trivialize the two free determinant lines, is
the determinant-line form of the coordinate Hodge-star identification used
above.

The sections $v_1,\ldots,v_r$ define a map $R_0^r\to K$.  By hypothesis its
specialization at every geometric point is an isomorphism.  Its kernel and
cokernel therefore have empty support, hence both vanish; equivalently, the
$v_i$ form a global basis of $K$.  In particular
\[
 v_1\wedge\cdots\wedge v_r
\]
is a nowhere-vanishing generator of the determinant line $\bigwedge^rK$.

Now wedge the $m$ row covectors of $B$ and apply the coordinate Hodge star.
In the standard basis its coordinates are, up to the usual signs, the
maximal minors of $B$.  Because these minors generate the unit ideal, the
resulting Pl\"ucker cofactor vector is also nowhere vanishing.  At every
geometric point it spans the exterior power of the kernel of $B$, so it is a
second generator of $\bigwedge^rK$.  The ratio of these two generators is a
unit of $R_0$.  The only units in a polynomial ring over a field are the
nonzero constants, which proves the assertion.
\end{proof}

\begin{lemma}[Jacobi cofactor identity]\label{lem:jacobi-cofactor-identity}
With $R(z)$ as in \eqref{eq:parity-minor},
\begin{equation}\label{eq:ambient-jacobi-parity}
 J_G(z^2)=c\,R(z)^2
\end{equation}
for a nonzero constant $c$.
\end{lemma}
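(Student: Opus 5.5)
The plan is to compute the Jacobian $J_G$ through the differential of the adjoint quotient at a regular element, and to bring the cofactor lemma in via the kernel of $\ad Q$ restricted to odd vectors. Since $Q(q)$ is regular for every $q$ by \eqref{eq:jacobi-everywhere-regular}, a classical fact (Kostant) applies: the gradients $\nabla P_i(Q)$ (taken through the invariant form $B$) lie in $\g_Q$ and form a basis of $\g_Q$ at every regular point. Because every $P_i$ has even degree and $\varepsilon_{\mathrm{ht}}$ is an automorphism which negates $Q$, we get $\nabla P_i(-Q)=-\nabla P_i(Q)$. Hence $\nabla P_i(Q)$ lies in the $(-1)$-eigenspace $\g_{\mathrm{odd}}$. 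It follows that $v_i(q)=\nabla P_i(Q(q))$ are polynomial sections of $\g_{\mathrm{odd}}$ forming a basis of $\ker(\ad Q(q):\g_{\mathrm{odd}}\to\g_{\mathrm{even}})$ at every geometric point. Regularity gives $\dim\g_Q=r$, so by \eqref{eq:g-parity-dimension} this map has full row rank everywhere. We then write
\[
 \frac{\partial P_i(Q(q))}{\partial q_\alpha}=B(v_i(q),n_\alpha),
\]
so that $J_G(q)$ is the pairing of $v_1\wedge\cdots\wedge v_r$ with the fixed covector $\bigwedge_\alpha B(\cdot,n_\alpha)$. This covector only sees the $\g_{-\alpha}$-components of the odd vectors.

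Next we apply \cref{lem:cofactor-kernel} over $R_0=\Fb[q]$ with the matrix $\ad Q(q)|_{\g_{\mathrm{odd}}}$. The lemma identifies $v_1\wedge\cdots\wedge v_r$, up to a constant, with the Plücker cofactor vector of maximal minors. Pairing with the coordinates dual to $\{n_{-\alpha}\}_{\alpha\in\Delta}$ selects one particular maximal minor: the determinant of $\ad Q(q)$ restricted to the complementary odd basis vectors, namely
\[
 \bigoplus_{\alpha\in\Delta}\g_\alpha\oplus\bigoplus_{\gamma\in\Phi^{+,*}_{\mathrm{odd}}}(\g_\gamma\oplus\g_{-\gamma})\longrightarrow\Lie(A)\oplus\bigoplus_{\beta\in\Phi^+_{\mathrm{ev}}}(\g_\beta\oplus\g_{-\beta}).
\]
So $J_G(q)=c\,M(q)$, where $M$ is this minor.

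The remaining step is to identify $M(z^2)$ with $R(z)^2$. We conjugate by $t$, which rescales root vectors diagonally and therefore changes $M$ only by a monomial factor in $z$ coming from the determinant of $\Ad(t)$ on source and target. Then $Q(z^2)$ becomes $Y(z)\in\h$, and we rewrite the source and target in the bases $x_\beta,y_\beta$. With respect to $\h\oplus\p$, $\ad Y$ preserves the splitting. The source splits into
\[
 \bigl(V_\p'\oplus O_\h\bigr)\oplus O_\p,
\]
where $\g_\alpha$ gets mixed with $x_\alpha$ and $y_\alpha$, and this mixing must be handled. The target splits as $E_\h\oplus(\Lie(A)\oplus E_\p)$. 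This should give the factorization
\[
 M\sim R(z)\cdot\Bigl(\prod_\alpha z_\alpha\Bigr)R(z)\cdot(\text{monomial}),
\]
by \cref{lem:parity-block-comparison,lem:relative-parity-determinant}. Homogeneity then pins down the exponents: $J_G$ is a polynomial in $q$, and the degree of $R$ in $z$ is $N$, so the monomial factors have to cancel. To confirm this cancellation I would also use that both sides are polynomials with no common zero on the torus, and that $J_G$ does not vanish at $q=0$ by the everywhere-regular Kostant section. That nonvanishing forces the monomial to be constant.

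The main obstacle is this last identification. The specific minor selected by $n_{-\alpha}$ mixes the $\g_\alpha$ directions with the $\h/\p$ splitting, and the power of $\prod z_\alpha$ has to be tracked carefully. I would first try to justify the $\prod z_\alpha$ cancellation purely by the nonvanishing $J_G(0)\neq0$ together with degree counting from \cref{lem:height-parity-count}, rather than by an explicit block computation.
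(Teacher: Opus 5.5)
Your first half is the paper's argument. The oddness of the gradients $\nabla P_i(Q(q))$, surjectivity of $\ad Q(q)\colon\g_{\mathrm{odd}}\to\g_{\mathrm{even}}$ at every geometric point, the application of \cref{lem:cofactor-kernel}, and the identification $J_G=c\,M$ with the minor on $C=\bigoplus_\alpha\g_\alpha\oplus\bigoplus_\gamma(\g_\gamma\oplus\g_{-\gamma})$ are all correct.

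The gap is in the last step, and the device you propose for closing it fails. It is not true that $J_G(0)\neq0$. Since $\lambda\,\Ad(\rho^\vee(\lambda))Q(q)=Q(\lambda^2q)$, each $P_i(Q(q))$ is homogeneous of degree $(m_i+1)/2$ in $q$. Hence $J_G$ is homogeneous of degree $\sum_i(m_i-1)/2=N$ and vanishes at $q=0$ as soon as $N>0$; already for $C_2$ it is linear. Everywhere-regularity of $Q(q)$ gives linear independence of the gradients in $\g$. It does not give nondegeneracy of their pairing with the tangent space $\bigoplus_\alpha\g_\alpha$ of $\cQ$. Indeed $\nabla P_i(Q(0))$ lies in $h$-degree $-2m_i$, so only the gradients with $m_i=1$ pair nontrivially with that space. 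Homogeneity cannot rescue the argument either: a total-degree count only shows that the undetermined Laurent monomial has total degree zero, which does not exclude factors like $z_\alpha z_{\alpha'}^{-1}$. Finally, the claim that neither side vanishes on the torus is not available before the identity is proved.

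The monomial need not be guessed; the bookkeeping you set aside is short, and it is how the paper closes the argument.

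\emph{The conjugation factor.} $\Ad(t)$ is trivial on $\Lie(A)$, and the lines $\pm\beta$, $\pm\gamma$ cancel in pairs, so $\det(\Ad(t)|_{\g_{\mathrm{even}}})=1$. On $C$ the only unpaired lines are the positive simple $\g_\alpha$, on which $\Ad(t)$ acts by $\alpha(t)=z_\alpha^{-1}$. Hence $\det(\ad Y(z)|_C)=(\prod_\alpha z_\alpha)M(z^2)$ exactly.

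\emph{The block structure.} Your grouping $(V_\p'\oplus O_\h)\oplus O_\p$ does not produce square diagonal blocks against $E_\h\oplus(\Lie(A)\oplus E_\p)$. Instead order the source as $O_\h\oplus\bigl(\bigoplus_\alpha\Fb n_\alpha\oplus O_\p\bigr)$. Since $Y\in\h$ is odd and $\h\cap\g_{\mathrm{even}}=E_\h$ (because $\h\cap\Lie(A)=0$), we have $\ad Y(O_\h)\subset E_\h$. So the matrix is block upper triangular, with first diagonal block of determinant $R(z)$. Write $n_\alpha=\tfrac12(x_\alpha+y_\alpha)$; the $x_\alpha$-part maps into $E_\h$ and only affects the off-diagonal block. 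Thus the second diagonal block is $\ad Y\colon V_\p\oplus O_\p\to\Lie(A)\oplus E_\p$ with $r$ columns scaled by $\tfrac12$. By \cref{lem:relative-parity-determinant} its determinant is $c(\prod_\alpha z_\alpha)R(z)$.

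Therefore $\det(\ad Y(z)|_C)=c(\prod_\alpha z_\alpha)R(z)^2$. The factor $\prod_\alpha z_\alpha$ cancels exactly against the conjugation factor, giving $M(z^2)=c\,R(z)^2$ with no residual monomial.
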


\begin{proof}
We first identify the kernel of the parity block uniformly in $q$.  Because the $P_i$ have even degree and $Q(q)$ is odd for the height-parity
involution, their gradients are odd.  Indeed, equivariance of the gradient
under the invariant bilinear form gives
\[
 \nabla P_i(\varepsilon_{\mathrm{ht}}Q)
 =\varepsilon_{\mathrm{ht}}\nabla P_i(Q)
\]
while homogeneity gives $\nabla P_i(-Q)=-\nabla P_i(Q)$.  Hence
\[
 \varepsilon_{\mathrm{ht}}\nabla P_i(Q(q))
   =-\nabla P_i(Q(q))
\]
The gradients are polynomial functions of $q$.  Since the $P_i$ are
invariant, each $\nabla P_i(Q(q))$ centralizes $Q(q)$.  By
\eqref{eq:jacobi-everywhere-regular}, $Q(q)$ is regular for every geometric
$q$, and the standard differential criterion for the adjoint quotient shows
that
\[
 \nabla P_1(Q(q)),\ldots,\nabla P_r(Q(q))
\]
form a basis of $\g_{Q(q)}$.  Thus the entire $r$-dimensional centralizer is
contained in $\g_{\mathrm{odd}}$.  For
\[
 B_Q:=\ad Q(q):\g_{\mathrm{odd}}\longrightarrow\g_{\mathrm{even}}
\]
the kernel is therefore exactly $\g_{Q(q)}$.  Together with
\eqref{eq:g-parity-dimension}, this gives
\[
 \dim\ker B_Q=r,
 \qquad
 \rank B_Q=\dim\g_{\mathrm{even}}
\]
so $B_Q$ is surjective for every geometric $q$, with the polynomial gradients
as a basis of its kernel.  Hence \cref{lem:cofactor-kernel} applies over the
polynomial ring $\Fb[q_\alpha:\alpha\in\Delta]$.

We next identify the relevant Pl\"ucker coordinate.  For this computation, write
\[
 b_\alpha=B(n_{-\alpha},n_\alpha)\ne0
\]
Since distinct root spaces are orthogonal unless their roots sum to zero,
the coefficient of $n_{-\alpha}$ in $\nabla P_i(Q(q))$ is
\[
 b_\alpha^{-1}\,
 dP_i|_{Q(q)}(n_\alpha)
 =b_\alpha^{-1}
   \frac{\partial P_i(Q(q))}{\partial q_\alpha}
\]
It follows that the coefficient of
\[
 \bigwedge_{\alpha\in\Delta}n_{-\alpha}
\]
in $\bigwedge_i\nabla P_i(Q(q))$ is a fixed nonzero constant times
$J_G(q)$.  Under the coordinate Hodge-star identification in
\cref{lem:cofactor-kernel}, this Pl\"ucker coordinate is the complementary
maximal minor of $B_Q$, namely the minor obtained by deleting the negative
simple-root columns.  Therefore
\begin{equation}\label{eq:ambient-cofactor-minor}
 J_G(q)=c_0 M_G(q),
 \qquad
 M_G(q)=\det\left(
 \ad Q(q):C\longrightarrow\g_{\mathrm{even}}
 \right)
\end{equation}
where
\[
 C=
 \left(\bigoplus_{\alpha\in\Delta}\g_\alpha\right)
 \oplus
 \left(\bigoplus_{\gamma\in\Phi_{\mathrm{odd}}^{+,*}}
       (\g_\gamma\oplus\g_{-\gamma})\right)
\]
This is an explicit identification of the ambient Jacobi Jacobian with a
single parity cofactor.

Now substitute $q=z^2$ and conjugate by $t$ from
\eqref{eq:z-square-root-coordinates}.  Since
$Y(z)=\Ad(t)Q(z^2)$ and both $C$ and $\g_{\mathrm{even}}$ are stable under
$\Ad(t)$, the corresponding matrices satisfy
\[
 B_Y
 =\Ad(t)|_{\g_{\mathrm{even}}}\circ
   B_Q\circ\Ad(t)^{-1}|_C,
 \qquad
 B_Y:=\ad Y(z)|_C
\]
The determinant of $\Ad(t)$ on $\g_{\mathrm{even}}$ is one: every nonzero
root space occurs together with its opposite, and $\Ad(t)$ is trivial on
$\Lie(A)$.  The same cancellation occurs on the non-simple summands of $C$.
On the positive simple-root line $\g_\alpha$, however,
$\Ad(t)$ acts by
\[
 \alpha(t)=z_\alpha^{-1}
\]
Consequently
\begin{equation}\label{eq:cofactor-conjugation}
 M_Y(z):=\det B_Y(z)
 =\left(\prod_\alpha z_\alpha\right)M_G(z^2)
\end{equation}

It remains to calculate $M_Y(z)$ from the $\theta$-eigenspace decomposition.
The non-simple odd root pairs in $C$ split into their $\h$- and $\p$-lines, so
\[
 C=O_\h\oplus C_2,
 \qquad
 C_2=
 \left(\bigoplus_{\alpha\in\Delta}\Fb e_\alpha\right)\oplus O_\p
\]
where
\[
 e_\alpha=(x_\alpha+y_\alpha)/2
\]
Likewise
\[
 \g_{\mathrm{even}}
 =E_\h\oplus\bigl(\Lie(A)\oplus E_\p\bigr)
\]
Because $Y\in\h$, the operator $\ad Y$ preserves the $\h$- and
$\p$-eigenspaces.  Moreover
\[
 \ad Y(O_\h)\subset E_\h
\]
a non-simple positive odd root cannot bracket with a simple root into the
Cartan, and the remaining possible root components have even height.  Thus,
with the displayed source and target orders, $B_Y(z)$ is block triangular.
Its first diagonal block is
\[
 \ad Y:O_\h\longrightarrow E_\h
\]
whose determinant is $R(z)$ by definition.

For the second diagonal block, projection to
$\Lie(A)\oplus E_\p$ kills the $x_\alpha/2$ component of each positive simple
vector $e_\alpha$ and retains its $y_\alpha/2$ component.  Thus, after the
fixed change of basis that rescales the $r$ simple columns by $2$, this block
is equivalent for the present determinant calculation to
\[
 \ad Y:V_\p\oplus O_\p
 \longrightarrow \Lie(A)\oplus E_\p
\]
By \cref{lem:relative-parity-determinant}, its determinant is
\[
 c_1\left(\prod_\alpha z_\alpha\right)R(z)
\]
Hence
\[
 M_Y(z)
 =c_2\left(\prod_\alpha z_\alpha\right)R(z)^2
\]
Combining this with \eqref{eq:cofactor-conjugation} gives
$M_G(z^2)=c_2R(z)^2$, and the identification
$J_G=c_0M_G$ in \eqref{eq:ambient-cofactor-minor} proves
\eqref{eq:ambient-jacobi-parity}.
\end{proof}

The preceding identity also shows that $R$ is nonzero, since
\cref{thm:jacobi-rank} makes $J_G$ nonzero in the equal-rank case.
Height combinatorics make $R$ explicit.

\begin{corollary}[Monomial Jacobi Jacobian]\label{cor:monomial-jacobi-jacobian}
Put
\begin{equation}\label{eq:kappa-root}
 \kappa=
 \sum_{\gamma\in\Phi_{\mathrm{odd}}^{+,*}}\gamma
 -\sum_{\beta\in\Phi_{\mathrm{ev}}^+}\beta
 =\sum_{\alpha\in\Delta}k_\alpha\alpha
\end{equation}
Then $k_\alpha\geq0$ and
\begin{equation}\label{eq:R-monomial}
 R(z)=c\prod_{\alpha\in\Delta}z_\alpha^{k_\alpha},
 \qquad
 J_G(q)=c'\prod_{\alpha\in\Delta}q_\alpha^{k_\alpha}
\end{equation}
\end{corollary}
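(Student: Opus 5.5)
The approach is to show that $R(z)$ is a weight vector for a torus action on the $z$-variables. Homogeneity then forces it to be a monomial, and the Jacobian formula follows from \cref{lem:jacobi-cofactor-identity}.

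Let $s\in A(\Fb)$ act on $\g$ by $\Ad(s)$. Since $\Ad(s)x_\alpha$ is no longer in $\h$, the action does not preserve the $x_\alpha$ directly, so we work with the bracket structure instead. The first step is to define a multigrading on the parameter space by giving $z_\alpha$ multidegree $\alpha\in X^*(A)$. By the computation in the proof of \cref{lem:parity-block-comparison}, the matrix coefficient of $\ad Y(z)$ from $x_\gamma$ to $x_\beta$ is $z_\alpha$ times a constant, and it is nonzero only when $\beta=\gamma\pm\alpha$. Ordering $O_\h$ and $E_\h$ by their root labels, each entry in the $(\beta,\gamma)$ position is therefore a constant multiple of $z_\alpha$ with $\alpha=\pm(\beta-\gamma)$.

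The sign is exactly where care is needed. To control it, I would instead work with the Laurent rescaling $\widetilde z$: conjugating by $s\in A$ sends $Q(q)$ to $\sum(\alpha(s)^{-1}\gamma_\alpha n_{-\alpha}+\alpha(s)q_\alpha n_\alpha)$. Homogeneity of $J_G$ under the joint scaling ($q_\alpha\mapsto \alpha(s)^2q_\alpha$, $\gamma_\alpha\mapsto\alpha(s)^{-2}\gamma_\alpha$ viewed as extra variables) can then be traced through $P_i$. The cleaner route is the weight argument on $M_G$ from \eqref{eq:ambient-cofactor-minor}. Put $Q$ into $\g$ with $n_{-\alpha}$ coefficients as variables $p_\alpha$. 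Then $P_i(Q)$ is invariant under $A$, so $P_i$ depends only on the products $p_\alpha q_\alpha$. Hence $J_G$ is a polynomial in $q$ (with $p=\gamma$ fixed) whose monomials $\prod q_\alpha^{k_\alpha}$ are constrained by $A$-weight bookkeeping of the minor $M_G$.

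The key weight computation runs as follows. Under $s\in A$, $\ad Q$ transforms as $\Ad(s)\circ\ad Q\circ\Ad(s)^{-1}=\ad(\Ad(s)Q)$. Taking determinants on $C\to\g_{\mathrm{even}}$, $M$ evaluated at the rescaled parameters ($q_\alpha\alpha(s)$, $p_\alpha\alpha(s)^{-1}$) equals $\det\Ad(s)|_{\g_{\mathrm{even}}}\cdot\det\Ad(s)|_C^{-1}\cdot M$. The first factor is $1$. The weight of $C$ is $\sum_{\alpha\in\Delta}\alpha$, since the opposite odd pairs cancel. Thus $M$ is a weight vector of weight $-\sum_\alpha\alpha$ for this torus action.

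A single weight is not enough, because it only fixes $\deg_q-\deg_p$ in each direction. So the argument must also use the total $z$-degree, which the second input supplies. Since $p=\gamma$ is a constant, a monomial $\prod q_\alpha^{a_\alpha}p_\alpha^{b_\alpha}$ must satisfy $a_\alpha-b_\alpha=-1$ for each $\alpha$, which alone does not force a single monomial. The extra constraint comes from $J_G$. As a function of $(p,q)$, $J_G$ equals $\det(\partial P_i/\partial q_\alpha)$, and $P_i$ is a polynomial in the products $u_\alpha=p_\alpha q_\alpha$. Hence $J_G=\prod_\alpha p_\alpha\cdot\det(\partial\widetilde P_i/\partial u_\alpha)$, and the determinant depends only on $u$. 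For that function of $u$ alone, the combined weights $q_\alpha\mapsto\alpha(s)q_\alpha$, $p_\alpha\mapsto\alpha(s)^{-1}p_\alpha$ are both trivial on $u$, so this again gives no constraint.

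This is the main obstacle: torus weights alone cannot force monomiality. I would therefore try the principal grading instead. The element $\rho^\vee(\lambda)$ combined with the scalar action $Q\mapsto\lambda Q$ makes $Q(q)$ homogeneous, giving $q_\alpha$ degree $2$ and $R$ total degree $\sum k_\alpha$ via \eqref{eq:height-parity-sum}. Combined with the identity $J_G=cR^2$, this shows $R$ is a polynomial in $z$ whose square depends only on $q=z^2$.

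The decisive extra input I would then seek is nonvanishing of $J_G$ on the whole torus $(\Fb^\times)^\Delta$. If $J_G$ had a zero there, the map $\cQ\to\cC_G$ would be ramified at a point of $A$-regular Jacobi elements. The $A$-action $q\mapsto(\alpha(s)^2q_\alpha)$ is transitive on $(\Fb^\times)^\Delta$ because $G$ is adjoint. It intertwines $Q(q)$ with a conjugate of $Q(\text{other }q)$, up to the fixed $\gamma$ factor, after combining with scaling by $\lambda$ — this is where I expect difficulty, since transitivity requires the $\gamma$-coefficients to be rescaled compatibly. Granting it, $J_G$ transforms by a character under a transitive torus action on $(\Fb^\times)^\Delta$, so it is a constant times a Laurent monomial. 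Being a polynomial, it is a genuine monomial $\prod q_\alpha^{k'_\alpha}$ with $k'_\alpha\ge0$. Then $R^2=c^{-1}J_G(z^2)$ is a monomial, so $R$ is a monomial in $z$. Its exponents are read off from the $A$-weight of the determinant $\det(O_\h\to E_\h)$, which is $\kappa$ from \eqref{eq:kappa-root}; this identifies $k_\alpha$ and gives $k_\alpha\ge0$.
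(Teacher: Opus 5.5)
Your proposal has a genuine gap at the step you flagged yourself. The ``decisive'' input, a transitive torus action on $(\Fb^\times)^\Delta$ under which $J_G$ transforms by a character, does not exist. With the $\gamma_\alpha$ held fixed, $\lambda\Ad(s)Q(q)$ has $n_{-\alpha}$-coefficient $\lambda\alpha(s)^{-1}\gamma_\alpha$. So it lies in the family only if $\alpha(s)=\lambda$ for every $\alpha\in\Delta$, that is, $s=\rho^\vee(\lambda)$, and the induced action on parameters is just $q\mapsto\lambda^2q$. This is the one-parameter principal grading. It gives only homogeneity (total $z$-degree $N$), which cannot distinguish monomials of the same degree. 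You had already found the underlying reason: $P_i(Q)$ depends only on the products $p_\alpha q_\alpha$, on which the full $A$-action is trivial. For a polynomial, nonvanishing on the torus is equivalent to being a constant times a monomial, so ``granting it'' grants the statement itself. The final step has a second problem. $O_\h$ and $E_\h$ are spanned by the vectors $x_\beta=e_\beta+f_\beta$, which are not $A$-weight vectors. So ``the $A$-weight of $\det(O_\h\to E_\h)$'' is not defined, and it cannot be used to read off the exponents.

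The missing idea is to resolve directly the sign ambiguity you noticed in your first paragraph, using \eqref{eq:height-parity-sum}. Expand $R(z)$ over bijections $\sigma:\Phi_{\mathrm{odd}}^{+,*}\to\Phi_{\mathrm{ev}}^+$. A nonzero term uses only entries with $\sigma(\gamma)=\gamma\pm\alpha_\gamma$, each a constant times $z_{\alpha_\gamma}$, so $\operatorname{ht}(\gamma)-\operatorname{ht}(\sigma(\gamma))=\pm1$. Summed over the $N$ pairs, \eqref{eq:height-parity-sum} says the total is exactly $N$, so every difference equals $+1$: only lowering pairs $\sigma(\gamma)=\gamma-\alpha_\gamma$ occur. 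Hence $\sum_\gamma\alpha_\gamma=\kappa$ for every contributing $\sigma$. By linear independence of $\Delta$, every term is then a constant times $\prod_\alpha z_\alpha^{k_\alpha}$, with $k_\alpha=\#\{\gamma:\alpha_\gamma=\alpha\}\geq0$. Nonvanishing of $R$, from \cref{lem:jacobi-cofactor-identity} and \cref{thm:jacobi-rank}, makes the total coefficient nonzero. Then $J_G(z^2)=cR(z)^2$ gives the formula for $J_G$. This is the paper's argument; no torus symmetry beyond the bookkeeping of root labels is needed.
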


\begin{proof}
Every nonzero matrix coefficient in the definition of $R(z)$ is a constant
times some $z_\alpha$ and can occur only between roots whose heights differ
by one.  A nonzero summand in the determinant expansion therefore chooses a
bijection from $\Phi_{\mathrm{odd}}^{+,*}$ to $\Phi_{\mathrm{ev}}^+$ such
that each chosen source-target pair $(\gamma,\beta)$ satisfies
$\abs{\operatorname{ht}(\gamma)-\operatorname{ht}(\beta)}=1$.  By
\eqref{eq:height-parity-sum}, the sum of
$\operatorname{ht}(\gamma)-\operatorname{ht}(\beta)$ over the $N$ chosen
pairs is $N$.  Each summand is at most one, so every one equals one.  Hence
for every chosen pair
\[
 \beta=\gamma-\alpha
\]
for a simple root $\alpha$.  The product of the corresponding matrix
coefficients therefore has exponent
\[
 \sum_{\gamma\in\Phi_{\mathrm{odd}}^{+,*}}\gamma
 -\sum_{\beta\in\Phi_{\mathrm{ev}}^+}\beta
 =\kappa
\]
and every nonzero determinant summand is a constant multiple of
$\prod_{\alpha\in\Delta}z_\alpha^{k_\alpha}$.
Since $R\neq0$, their total coefficient is nonzero and the exponents
$k_\alpha$ are nonnegative.  The second identity follows from
\eqref{eq:ambient-jacobi-parity} and $q_\alpha=z_\alpha^2$.
\end{proof}

\subsection{The logarithmic quotient density}

Let $H^\circ$ be the identity component of $H$.  The finite group
$H/H^\circ$ acts on $\h/\!/H^\circ$, and the natural map
$\h/\!/H^\circ\to\h/\!/H$ is finite.  In characteristic zero, after removing
the proper branch locus (and quotienting by any subgroup acting trivially),
this map is finite \'etale.  We shrink the common regular open set once and
for all to lie over this locus.  Passing between $H^\circ$ and $H$ then
changes the regular orbital calculation only by finite covering data.  On
sufficiently small $F$-analytic charts in this \'etale locus, the corresponding
Jacobian has locally constant nonzero absolute value.  We therefore compute
the differential density first for $H^\circ$ and record the finite-quotient
qualification in the descent-chart theorem below.

Choose homogeneous basic invariants
$s_1,\ldots,s_r$ of $\h$ for $H^\circ$ and write
\[
 \psi(z)=(s_1(Y(z)),\ldots,s_r(Y(z))),
 \qquad
 J_H(z)=\det\left(
 \frac{\partial s_i(Y(z))}{\partial z_\alpha}
 \right)
\]
The inclusion $\h\hookrightarrow\g$ induces a finite morphism
\[
 \pi:\h/\!/H^\circ\longrightarrow\g/\!/G
\]
Let $J_\pi$ denote its Jacobian in the chosen invariant coordinates.

\begin{lemma}[Relative quotient Jacobian]\label{lem:quotient-map-relative-jacobian}
On the common regular-semisimple locus,
\begin{equation}\label{eq:quotient-map-relative-jacobian}
 (J_\pi\circ\chi_H)^2=c\,D_H^G
\end{equation}
for a nonzero constant $c$ depending only on the choices of invariant
coordinates.
\end{lemma}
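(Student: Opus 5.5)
The plan is to compute both sides of \eqref{eq:quotient-map-relative-jacobian} on a Cartan subalgebra of $\h$ via the Chevalley restriction theorem, and compare them as $W_H$-invariant polynomials. In equal rank, choose a Cartan subalgebra $\tT_H\subset\h$. It is also a Cartan subalgebra of $\g$, since $\rank H=\rank G$, and regular elements of $\h$ that are regular semisimple in $\g$ are $H^\circ$-conjugate into it over $\Fb$. Both sides of \eqref{eq:quotient-map-relative-jacobian} are $H^\circ$-invariant regular functions on the common regular-semisimple locus, so it suffices to prove the identity on the regular part of $\tT_H$, working over $\Fb$.

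On $\tT_H$, restriction identifies $\h/\!/H^\circ$ with $\tT_H/W_H$ and $\g/\!/G$ with $\tT_H/W_G$. The map $\pi$ becomes the natural projection $\tT_H/W_H\to\tT_H/W_G$. Let $\eta_H:\tT_H\to\tT_H/W_H$ and $\eta_G:\tT_H\to\tT_H/W_G$ be the quotient maps, in the chosen basic-invariant coordinates, so that $\eta_G=\pi\circ\eta_H$. Taking Jacobian determinants with respect to linear coordinates on $\tT_H$ gives
\[
 \operatorname{Jac}(\eta_G)=(J_\pi\circ\eta_H)\cdot\operatorname{Jac}(\eta_H)
\]
The classical Jacobian criterion for reflection groups (Chevalley, Steinberg) computes the outer factors. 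For the basic invariants of a Weyl group, the Jacobian is a nonzero constant times the product of the positive roots. Hence $\operatorname{Jac}(\eta_G)=c_G\prod_{\beta\in\Phi_G^+}\beta$ and $\operatorname{Jac}(\eta_H)=c_H\prod_{\beta\in\Phi_H^+}\beta$, where $\Phi_H\subset\Phi_G$ is the root system of $\h$ relative to $\tT_H$. Both constants $c_G,c_H$ are nonzero and depend only on the choice of invariants. Dividing gives
\[
 J_\pi(\eta_H(X))=c\prod_{\beta\in\Phi_G^+\setminus\Phi_H^+}\beta(X)
\]

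Squaring this expression, we compare it with $D_H^G(X)=D^G(X)/D^H(X)$ for regular $X\in\tT_H$. Here $\g_X=\h_X=\tT_H$, so
\[
 D^G(X)=\prod_{\beta\in\Phi_G}\beta(X)=(-1)^{\abs{\Phi_G^+}}\prod_{\beta\in\Phi_G^+}\beta(X)^2
\]
with the analogous formula for $D^H(X)$. The quotient $D^G/D^H$ is therefore $\pm\prod_{\Phi_G^+\setminus\Phi_H^+}\beta(X)^2$, which proves the identity on $\tT_H$ with a nonzero constant. Invariance then extends it to the full common regular-semisimple locus.

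The main point needing care is the first step. The regular-semisimple locus must be dense, and $\chi_H$ must be computed by restriction to a single Cartan subalgebra. For $H^\circ$ this holds geometrically by the Chevalley restriction theorem for the reductive group $H^\circ$. Only rational issues arise, and they do not affect a polynomial identity, which may be checked over $\Fb$. The constant $c$ also depends on the Cartan only through the fixed invariant coordinates, because all Cartans of $\h$ are $H^\circ(\Fb)$-conjugate.
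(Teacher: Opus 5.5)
Your proposal is correct and follows the paper's proof essentially step for step. Like the paper, you pass to $\Fb$, restrict to a common Cartan via Chevalley restriction, apply the reflection-group Jacobian formula for $W_G$ and $W_H$, divide using the chain rule, and compare the square with the root-product formulas for $D^G$ and $D^H$; your remarks on extending from the Cartan by $H^\circ$-invariance and on choosing $\Phi_H^+\subset\Phi_G^+$ compatibly only make explicit details the paper leaves implicit.
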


\begin{proof}
After base change to $\Fb$, choose a maximal torus $T$ of $H^\circ$.  Because
we are in the equal-rank case, it is also a maximal torus of $G$.  Put
$\tT=\Lie(T)$, and let $W_H$ and $W_G$ be the corresponding Weyl groups.
Chevalley restriction identifies the two geometric quotients with
\[
 \h/\!/H^\circ\simeq\tT/W_H
 \qquad\text{and}\qquad
 \g/\!/G\simeq\tT/W_G
\]
and the map $\pi$ is induced by the finite inclusion
$F[\tT]^{W_G}\subset F[\tT]^{W_H}$.

Choose linear coordinates $x_1,\ldots,x_r$ on $\tT$, and let
$s_1,\ldots,s_r$ and $P_1,\ldots,P_r$ be the chosen basic invariant
coordinates for $H^\circ$ and $G$.  The reflection-group Jacobian formula
gives
\[
 \det\left(\frac{\partial s_i}{\partial x_j}\right)
 =c_H\prod_{\alpha\in\Phi_H^+}\alpha
\]
and
\[
 \det\left(\frac{\partial P_i}{\partial x_j}\right)
 =c_G\prod_{\alpha\in\Phi_G^+}\alpha
\]
with $c_H,c_G\ne0$.  Since the $P_i$ are functions of the $s_j$, the chain
rule on $\tT$ yields
\[
 J_\pi(\chi_H(X))
 =c_0\,
 \frac{\prod_{\alpha\in\Phi_G^+}\alpha(X)}
      {\prod_{\alpha\in\Phi_H^+}\alpha(X)}
\]
on the common regular-semisimple locus.  Squaring and using the root-product formulas
for the Weyl discriminants gives
\[
 J_\pi(\chi_H(X))^2
 =c\,\frac{D^G(X)}{D^H(X)}
 =c\,D_H^G(X)
\]
which proves the claim.
\end{proof}

\begin{theorem}[Logarithmic Jacobi density]\label{thm:jacobi-log-density}
On the square-root Jacobi chart,
\begin{equation}\label{eq:H-jacobian-parity}
 J_H(z)=c\,R(z)
 =c'\prod_{\alpha\in\Delta}z_\alpha^{k_\alpha}
\end{equation}
Consequently, for compatible quotient measures,
\begin{equation}\label{eq:general-logarithmic-density}
 \psi^*\left(
 \frac{ds_1\cdots ds_r}{\abs{D_H^G}^{1/2}}
 \right)
 =C\prod_{\alpha\in\Delta}\frac{dz_\alpha}{\abs{z_\alpha}}
 =C'\prod_{\alpha\in\Delta}\dmeas z_\alpha
\end{equation}
Equivalently, after the finite squaring map $q_\alpha=z_\alpha^2$, the density
is a fixed multiple of product multiplicative Haar measure in the Jacobi
parameters.
\end{theorem}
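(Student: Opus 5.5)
The plan is to obtain $J_H$ from the chain rule through the finite map $\pi:\h/\!/H^\circ\to\g/\!/G$, and then combine the three previously established identities: \cref{lem:jacobi-cofactor-identity}, \cref{lem:quotient-map-relative-jacobian} and \cref{lem:relative-parity-determinant}.

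\textbf{Step 1: chain rule.} Since $Y(z)=\Ad(t)Q(z^2)$ is $G$-conjugate to $Q(z^2)$, we have $P_i(Y(z))=P_i(Q(z^2))$. Writing $P=\pi^*(P)$ as functions of $s_1,\dots,s_r$ gives
\[
 \det\Bigl(\frac{\partial P_i(Q(z^2))}{\partial z_\alpha}\Bigr)
 =J_\pi(\psi(z))\,J_H(z).
\]
The left side equals $J_G(z^2)\prod_\alpha 2z_\alpha$, because $\partial q_\alpha/\partial z_\alpha=2z_\alpha$.

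\textbf{Step 2: evaluate the three factors.} By \cref{lem:jacobi-cofactor-identity}, $J_G(z^2)=cR(z)^2$. By \cref{lem:quotient-map-relative-jacobian}, $J_\pi(\psi(z))^2=c\,D_H^G(Y(z))$. Combining this with \cref{lem:relative-parity-determinant} gives
\[
 J_\pi(\psi(z))=\pm c''\Bigl(\prod_\alpha z_\alpha\Bigr)R(z).
\]
This is an identity of polynomials in $z$. Its square is a polynomial identity, and since the polynomial ring is a UFD, the square root is determined up to sign; connectedness of the chart then fixes a single global sign.

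\textbf{Step 3: divide.} Since $R\neq0$ by the remark following \cref{lem:jacobi-cofactor-identity}, we may divide to get
\[
 J_H(z)=\frac{cR(z)^2\prod 2z_\alpha}{c''(\prod z_\alpha)R(z)}=c\,R(z)
\]
as polynomials on the dense locus, hence identically. The monomial form of $R$ then follows from \cref{cor:monomial-jacobi-jacobian}.

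\textbf{Step 4: the density.} The pullback is $\psi^*(ds_1\cdots ds_r)=\abs{J_H(z)}\prod dz_\alpha$, and by Step 2
\[
 \abs{D_H^G(Y(z))}^{1/2}=C\,\abs{R(z)}\prod\abs{z_\alpha}.
\]
The quotient is therefore $C\prod dz_\alpha/\abs{z_\alpha}$, and $d^\times z=dz/\abs z$ up to a normalizing constant $(1-q_F^{-1})^{-1}$. The restatement after squaring uses $d^\times q_\alpha=d^\times(z_\alpha^2)$, which is a fixed multiple of $d^\times z_\alpha$ locally, since the squaring map is finite étale on $F^\times$ in residue characteristic $\neq2$.

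\textbf{Main obstacle.} The delicate point is Step 2's extraction of a square root of $D_H^G$ consistent with $J_\pi$ via a polynomial identity, together with making sure the invariant coordinates $s_i$ are those for $H^\circ$ over $F$. If rationality of the $s_i$ is an issue, I would run the computation over $\Fb$, where only constants are affected, and absorb these into $C$. I also need to check that the constants $c$ are independent of the twist $\xi$: the computation is geometric, and the twisted charts differ only by left $H(F)$-translation, which leaves the invariants unchanged.
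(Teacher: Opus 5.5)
Your proposal is correct and follows the paper's proof essentially step for step: the paper also combines \eqref{eq:relative-discriminant-parity} with \cref{lem:quotient-map-relative-jacobian} and a UFD argument in $F[z_\alpha:\alpha\in\Delta]$ to get $J_\pi(\psi(z))=c_0\bigl(\prod_\alpha z_\alpha\bigr)R(z)$. It then uses the chain rule $J_\pi(\psi(z))J_H(z)=2^r\bigl(\prod_\alpha z_\alpha\bigr)J_G(z^2)$ and \cref{lem:jacobi-cofactor-identity} to cancel down to $J_H=cR$; your appeal to connectedness to fix a sign is unnecessary, since the UFD argument already makes the ratio a single constant.
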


\begin{proof}
By \eqref{eq:relative-discriminant-parity} and
\cref{lem:quotient-map-relative-jacobian}, there is a constant
$c\in F^\times$ such that
\[
  J_\pi(\psi(z))^2
  =
  c\left[
    \left(\prod_\alpha z_\alpha\right)R(z)
  \right]^2
\]
All root vectors may be chosen over $F$, since $G$ and $A$ are split, so both
$J_\pi(\psi(z))$ and $(\prod_\alpha z_\alpha)R(z)$ lie in
$F[z_\alpha:\alpha\in\Delta]$.  Their squares differ by a nonzero scalar.
Since this polynomial ring is a UFD, the two polynomials have the same
irreducible factors with the same multiplicities; their quotient is therefore
a unit, hence an element of $F^\times$.  Thus there is a constant
$c_0\in F^\times$ such that
\begin{equation}\label{eq:Jpi-parity}
  J_\pi(\psi(z))
  =
  c_0\left(\prod_\alpha z_\alpha\right)R(z)
\end{equation}
On the other hand, $Y(z)$ and $Q(z^2)$ are $G$-conjugate on the torus, hence
the polynomial identity
\[
 \pi\circ\psi(z)
 =\bigl(P_1(Q(z^2)),\ldots,P_r(Q(z^2))\bigr)
\]
holds everywhere.  The chain rule gives
\begin{equation}\label{eq:jacobi-chain-rule}
 J_\pi(\psi(z))J_H(z)
 =2^r\left(\prod_\alpha z_\alpha\right)J_G(z^2)
\end{equation}
Substitute \eqref{eq:Jpi-parity} and
\eqref{eq:ambient-jacobi-parity}; cancellation in the polynomial ring yields
$J_H=cR$.  Combining this with
\eqref{eq:relative-discriminant-parity} gives
\[
 \abs{D_H^G(Y(z))}^{1/2}
 =C\abs{J_H(z)}\prod_\alpha\abs{z_\alpha}
\]
which is exactly \eqref{eq:general-logarithmic-density}.
\end{proof}

\begin{remark}
The Hurwitz calculation in \cref{thm:typeCI-measure} is an independent
explicit type-$C$ realization of the same logarithmic-density mechanism as
\cref{thm:jacobi-log-density}.
\end{remark}

\subsection{Rational branches and the descent-chart theorem}

Let $\cC_G=\g/\!/G$ and let $L=F(\cQ)$.  In the equal-rank case,
\cref{cor:jacobi-fixed-quotient} makes $L$ a finite extension of
$F(\cC_G)$.  Let $\widetilde{\cC}_G$ be the normalization of $\cC_G$ in
$L$.  After shrinking the regular open set, the Jacobi parameters identify
$\widetilde{\cC}_G^\circ$ with an open subset of the smooth parameter space
$\cQ$.  We shrink once more so that this open lies in $(\Gm)^\Delta$ and
$Q(q)$ is regular semisimple throughout.  Pulling back $[2]:A\to A$ then
gives the ordinary square-root torsor, and
for $\xi\in\mathfrak B_A$ let $\cS_\xi$ be the twisted form constructed in
\cref{prop:twisted-cartan-charts}.

Over $\Fb$, every $\cS_\xi$ is the same square-root torus with coordinates
$z_\alpha$.  The logarithmic form
\begin{equation}\label{eq:log-form}
 \omega_{\log}=\bigwedge_{\alpha\in\Delta}\frac{dz_\alpha}{z_\alpha}
\end{equation}
is invariant under the $A[2]$-action $z_\alpha\mapsto\pm z_\alpha$ and hence
descends to every twisted $F$-form.  We denote the resulting density on the
$\xi$-branch by $\abs{\omega_{\log,\xi}}$.

\begin{theorem}[Jacobi descent charts]\label{thm:descent-charts}
Assume that $F$ is a non-Archimedean local field of characteristic zero and
residue characteristic different from $2$, that $G$ is split semisimple and
adjoint, that $(G,H,\theta)$ is a split symmetric pair with
$\rank H=\rank G$, and that the chosen maximal $\theta$-split torus $A$ is
$F$-split.  Then there is a dense open subset
$\cC_G^\circ\subset\cC_G$ with the following properties.
\begin{enumerate}[label=\textup{(\alph*)},leftmargin=2.3em]
\item The map
      $\widetilde{\cC}_G^\circ\to\cC_G^\circ$ is finite \'etale, and the
      Jacobi family defines a regular lift
      \[
       Q:\widetilde{\cC}_G^\circ\longrightarrow\g_{\mathrm{rs}}
\]
\item On the ordinary square-root cover, $u=t$ satisfies
      \[
       \tau(u)=a(q),\qquad Y=\Ad(t)Q\in\h
\]
      In the coordinates \eqref{eq:z-square-root-coordinates}, $Y$ is the
      linear family \eqref{eq:linear-fixed-jacobi}.
\item Put
      \[
       \mathfrak B_A=
       \ker\left(H^1(F,A[2])\longrightarrow H^1(F,H)\right)
\]
      For every $\xi\in\mathfrak B_A$, the twisted cover
      $\cS_\xi\to\widetilde{\cC}_G^\circ$ carries a regular $F$-morphism
      \[
       u_\xi:\cS_\xi\longrightarrow G,
       \qquad \tau(u_\xi)=a(q)
\]
      and hence a regular family $Y_\xi=\Ad(u_\xi)Q\in\h$.  Its $F$-points
      lie over exactly those $q$ for which $\delta(a(q))=\xi$, so the finite
      family of twisted covers exhausts the rational Cartan-lift locus.
\item On every twisted branch, the pullback of the normalized relative
      quotient density for $H^\circ$ is exactly logarithmic:
      \begin{equation}\label{eq:twisted-log-density}
       \frac{ds}{\abs{D_H^G}^{1/2}}
       =C_\xi\abs{\omega_{\log,\xi}}
\end{equation}
      for compatible local quotient measures.  For the full fixed group $H$,
      the same statement holds on sufficiently small $F$-analytic charts on
      the finite \'etale locus, with an additional locally constant
      nonzero factor coming from
      $\h/\!/H^\circ\to\h/\!/H$.  Geometrically the constant in the
      $H^\circ$ statement is independent of the twist; its displayed value
      depends only on measure normalizations.
\item On every twisted branch,
      \[
       \Ad(u_\xi^{-1})Y_\xi=Q
\]
      Although neither the twisted cover nor $u_\xi$ is asserted to extend
      across $q=0$, the inverse-conjugated expression is the pullback of the
      polynomial family $Q(q)$ on $\A^\Delta$ and therefore has the canonical
      continuation $Q(0)$, a principal nilpotent element.
\item Every semisimple descendant is again a split symmetric pair.  Let
      $L=G_X^\circ$ be the descendant of a semisimple
      $X\in\Lie(A)(F)$.  The inherited Cartan remains $F$-split and
      $\theta$-split, and the connected center $Z(L)^\circ\subset A$ is
      $\theta$-split.  The root system of $L_{\mathrm{der}}$ is
      $\Phi_X$.  On every irreducible component of $\Phi_X$ whose Weyl group
      contains $-1$, parts \textup{(a)}--\textup{(e)} apply to the
      corresponding adjoint derived factor after choosing a simple system
      for that component.  The $\theta$-split central torus is separate from this root-theoretic
      Jacobi construction, up to the usual finite central isogeny, and belongs to
      the residual centralizer-torus
      integration on the fixed side.
\end{enumerate}
For a split reductive group or a different central form, apply the statement
to the adjoint semisimple derived factor; the central eigenspaces factor off
and the additional lifting obstruction is the one in
\cref{prop:adjoint-reduction}.
\end{theorem}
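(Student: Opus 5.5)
The plan is to assemble the theorem from the earlier results in the order (a)--(f), choosing $\cC_G^\circ$ once at the start as an intersection of finitely many dense opens. Because $\rank H=\rank G$, \cref{cor:jacobi-fixed-quotient} shows that $\chi|_{\cQ}:\cQ\to\cC_G=\cC_G^\vartheta$ is dominant and generically \'etale between smooth varieties of the same dimension. By generic smoothness in characteristic zero there is a dense open $\cC_G^\circ$ over which the normalization $\widetilde{\cC}_G\to\cC_G$ is finite \'etale and identifies $\widetilde{\cC}_G^\circ$ with an open subset of $\cQ$. I would shrink $\cC_G^\circ$ further so that the following hold:
\begin{enumerate}[label=\textup{(\roman*)},leftmargin=2.3em]
\item the preimage lies in $(\Gm)^\Delta$;
\item $Q(q)$ is regular semisimple, using \eqref{eq:jacobi-everywhere-regular} together with the openness of the semisimple locus among regular elements;
\item $Y(q)$ is common regular semisimple in $\h$ and $\g$;
\item the image of $\psi$ avoids the branch locus of $\h/\!/H^\circ\to\h/\!/H$.
\end{enumerate}
These are finitely many dense open conditions, so the intersection is still dense. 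Part (a) is then immediate, with $Q$ the restriction of the polynomial family.

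For (b), the ordinary square-root cover $\cS(a)$ has points $(q,t)$ with $t^2=a(q)$. Since $\theta(t)=t^{-1}$ we get $\tau(t)=t^2=a(q)$, so \cref{lem:theta-Q,lem:formal-descent} give $\Ad(t)Q\in\h$. The linearity is \eqref{eq:linear-fixed-jacobi}, which one checks root by root: $\Ad(t)$ scales $q_\alpha n_\alpha$ by $\alpha(t)=z_\alpha^{-1}$ and $\gamma_\alpha n_{-\alpha}$ by $z_\alpha$. Part (c) is \cref{prop:twisted-cartan-charts} applied with $X=\widetilde{\cC}_G^\circ$ and the morphism $a$. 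Item (c) there gives $Y_\xi$, and \eqref{eq:twisted-fiber-kummer}--\eqref{eq:liftable-locus-decomposition} give the exhaustion. Finiteness of $\mathfrak B_A$ follows because $H^1(F,A[2])=A(F)/A(F)^2$ is finite for a local field of characteristic zero. Part (e) is \eqref{eq:twisted-inverse-conjugate}, combined with \cref{def:jacobi-family} for the continuation to $Q(0)$.

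The substantive item is (d). Over $\Fb$ the branch $\cS_\xi$ becomes the split square-root torus, and $Y_\xi=\Ad(h)Y$ with $h\in H(\Fb)$. Hence $\psi_\xi=\chi_H\circ Y_\xi$ agrees with $\psi$ up to the action of $h$ on $\h/\!/H^\circ$, which is trivial for $h\in H^\circ$ and a finite automorphism otherwise. \cref{thm:jacobi-log-density} then gives $\psi^*(ds/|D_H^G|^{1/2})=C\,|\omega_{\log}|$ geometrically. Both sides are $F$-rational: $\psi_\xi$ is an $F$-morphism by \cref{prop:twisted-cartan-charts}, and $\omega_{\log}$ is $A[2]$-invariant, so it descends. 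The ratio of the two top-degree forms is therefore an $F$-rational function on $\cS_\xi$ whose square is constant over $\Fb$, so it is a constant in $F^\times$. The density identity \eqref{eq:twisted-log-density} follows, with $|C_\xi|$ determined by the measure normalizations. For the full group $H$, compose with the finite \'etale map $\h/\!/H^\circ\to\h/\!/H$; by the inverse function theorem, its Jacobian has locally constant nonzero absolute value on small analytic charts.

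I expect the main obstacle to be exactly this descent step. One must check that $\psi_\xi$ and the form-level identity, not just the density, are compatible with the twisted Galois action. It must also be verified that a component of $H/H^\circ$ represented by $h$ does not spoil $H^\circ$-invariant coordinates; if it does, the plan is to absorb the effect into the finite-quotient qualification already allowed in the statement.

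Part (f) combines \cref{thm:descendants-split} and \cref{prop:root-centralizer-dimensions}. Since $X\in\Lie(A)(F)$, the centralizer $G_X^\circ$ contains $A$, so its Cartan is $A$, which is $F$-split and $\theta$-split. The torus $Z(L)^\circ$ lies in $A$ and is therefore $\theta$-split. For each irreducible component of $\Phi_X$ with $-1$ in its Weyl group, the derived adjoint factor is an equal-rank split pair, and (a)--(e) apply by the same argument. The central reductions go through \cref{prop:adjoint-reduction}.
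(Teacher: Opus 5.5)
Your proposal follows the paper's proof part by part:
\begin{enumerate}[label=\textup{(\alph*)},leftmargin=2.3em]
\item by shrinking inside the finite \'etale locus furnished by \cref{cor:jacobi-fixed-quotient};
\item from \cref{lem:theta-Q,lem:formal-descent} and \eqref{eq:linear-fixed-jacobi};
\item from \cref{prop:twisted-cartan-charts};
\item by identifying the twisted branch over $\Fb$ with the square-root chart, writing $Y_\xi=\Ad(h)Y$, and descending \cref{thm:jacobi-log-density} through the $A[2]$-invariance of $\omega_{\log}$;
\item from \cref{prop:twisted-cartan-charts};
\item from \cref{thm:descendants-split,prop:root-centralizer-dimensions}.
\end{enumerate}
The one step left incomplete is the component-group point you flag in (d), and your fallback would not close it. The statement asserts the $H^\circ$-identity exactly on every twisted branch, with a twist-independent constant. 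The locally constant finite-quotient factor is permitted only for the full group $H$. Absorbing a possible effect of a non-identity component of $H$ into that factor would therefore prove a weaker statement than the one claimed.

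The missing observation, which is how the paper closes this, is that $H/H^\circ$ acts on $\h/\!/H^\circ\simeq\A^r$ by polynomial automorphisms. The Jacobian determinant of a polynomial automorphism of affine space is a unit of the polynomial ring, hence a nonzero constant. Finite order then makes it a root of unity $\zeta$. You also need the $H$-invariance of the relative discriminant: in equal rank $D_H^G(Y)=\det(\ad Y;\p)$, and $\Ad(h)$ preserves $\p$. Together these give, over $\Fb$,
\[
 \psi_\xi^*(ds_1\wedge\cdots\wedge ds_r)=\zeta\,\psi^*(ds_1\wedge\cdots\wedge ds_r),
 \qquad
 D_H^G\circ Y_\xi=D_H^G\circ Y .
\]
Hence \cref{thm:jacobi-log-density} transfers to $\psi_\xi$ with the same absolute constant, which is also what gives the twist-independence asserted in (d).

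You should also state the descent step precisely. The ratio $f_\xi=\psi_\xi^*(ds_1\wedge\cdots\wedge ds_r)/\omega_{\log,\xi}$ is an $F$-rational function, but its square is a constant multiple of $D_H^G\circ Y_\xi$, not a constant. What is $F$-rational and geometrically constant, hence in $F^\times$, is $f_\xi^2/(D_H^G\circ Y_\xi)$. Taking absolute values at $F$-points then yields \eqref{eq:twisted-log-density}.
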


\begin{proof}
For \textup{(a)}, \cref{cor:jacobi-fixed-quotient} makes
$F(\cQ)/F(\cC_G)$ a finite separable extension.  The normalization is finite
over $\cC_G$, and after restricting to the common finite \'etale locus the
birational map from the Jacobi parameter space to that normalization is an
isomorphism onto a dense open subset.  Part
\textup{(b)} is \cref{lem:theta-Q,lem:formal-descent} together with
\eqref{eq:linear-fixed-jacobi}.  Part \textup{(c)} is
\cref{prop:twisted-cartan-charts}.

For \textup{(d)}, identify a twisted branch over $\Fb$ with the ordinary
square-root chart.  Under this identification
$u_\xi=h_\xi t$ with $h_\xi\in H(\Fb)$ constant, so
$Y_\xi=\Ad(h_\xi)Y(z)$.  The relative discriminant is $H$-invariant.  The
finite component group $H/H^\circ$ acts on $\h/\!/H^\circ$ by polynomial
automorphisms; the Jacobian determinant of such an automorphism is a unit in
the polynomial coordinate ring and hence a constant, and finite order makes
that constant a root of unity.  Thus the absolute quotient density is
$H$-invariant.  Since $\omega_{\log}$ is invariant under the $A[2]$ descent
datum, \cref{thm:jacobi-log-density} descends to the $F$-form and gives
\eqref{eq:twisted-log-density} for the $H^\circ$ quotient.  Passing to the
full $H$-quotient uses the finite \'etale map discussed above; on a sufficiently
small $F$-analytic chart its Jacobian has locally constant nonzero absolute
value, giving the stated qualification.  Part \textup{(e)} follows from
\eqref{eq:twisted-inverse-conjugate} together with the fact that $Q(q)$ is
polynomial on all of $\A^\Delta$.  For part \textup{(f)}, let
$L=G_X^\circ$ with $X\in\Lie(A)(F)$.  Since $A$ centralizes $X$ and is
connected, $A\subset L$; because semisimple centralizers have the same absolute
rank as $G$, this $A$ is a maximal torus of $L$.  By
\cref{thm:descendants-split}, the descendant is again a split symmetric pair,
and by \cref{prop:root-centralizer-dimensions} its derived root system is
$\Phi_X$.  The connected center $Z(L)^\circ$ lies in the maximal torus $A$
and is therefore $\theta$-split, so it contributes no fixed Cartan rank.
For each irreducible component of $\Phi_X$ whose Weyl group contains $-1$,
pass to the corresponding adjoint derived factor.  That factor is an
equal-rank split symmetric pair with inherited $F$-split, $\theta$-split
Cartan, and parts \textup{(a)}--\textup{(e)} apply exactly as in Sections~7
and 8.  The connected $\theta$-split center is separate from these
root-theoretic Jacobi coordinates, up to the usual finite central isogeny, and
remains in the residual centralizer quotient on the fixed side.
\end{proof}

\begin{remark}
Part \textup{(c)} makes the phrase ``rational Cartan-lift branch'' literal:
each class in $\mathfrak B_A$ defines an $F$-form of the square-root cover and
an $F$-rational map to $G$.  This $H$-cohomological branch datum is distinct
from the rational-orbit problem for the limiting ambient nilpotent element.
The rational $G(F)$-orbits in the stable orbit of $Q(0)$ are governed by
\[
 \ker\left[H^1(F,G_{Q(0)})\longrightarrow H^1(F,G)\right]
\]
There may be interesting arithmetic in the specialization of Cartan-lift
branches to rational nilpotent orbits, but no canonical identification of
these two cohomology sets is asserted here.
\end{remark}

\begin{warning}
Part \textup{(f)} deliberately applies the equal-rank descent-chart theorem
to the appropriate adjoint derived factors, not to the whole reductive
descendant.  There are two distinct sources of rank defect.  First, even if
every irreducible component of $\Phi_X$ has $-1$ in its Weyl group, the
connected center $Z(G_X^\circ)^\circ\subset A$ is $\theta$-split and can make
$\rank G_X^\circ>\rank (G_X^\circ)^\theta$; these central directions belong
to the residual torus integration.  Second, root subsystems of types $A_n$
$(n\geq2)$, $D_{2m+1}$, and $E_6$ can occur inside equal-rank ambient
systems.  On such derived factors the Jacobi family maps dominantly only to
the $\theta$-fixed part of the adjoint quotient, and the corresponding rank
defect again appears in the residual centralizer torus of
\cref{eq:centralizer-disintegration}.
\end{warning}

\section{Piecewise-affine weight coordinates}
\label{sec:jacobi-weights}

\subsection{Piecewise-affine Iwasawa heights}

Fix a minimal parabolic $B=TU$ containing $A$ and a special maximal compact
subgroup $K$ for which $G(F)=U(F)T(F)K$.  We normalize
\[
 \langle\lambda,H_B(utk)\rangle=-\val(\lambda(t))
\]

\begin{lemma}[Highest-weight norms and the Iwasawa map]
\label{lem:highest-weight-iwasawa}
There are dominant characters $\lambda_1,\ldots,\lambda_s\in X^*(T)$ spanning
$X^*(T)\otimes_\mathbb Z\mathbb Q$, positive integers $N_j$, $F$-rational
representations $\rho_j:G\to\GL(V_j)$ of highest weight $N_j\lambda_j$, and
$K$-stable lattice norms $\ell_j$ such that, for a highest-weight vector
$v_j$,
\[
 \ell_j(\rho_j(g^{-1})v_j)
 =N_j\langle\lambda_j,H_B(g)\rangle
\]
\end{lemma}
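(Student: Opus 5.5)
The plan is the standard highest-weight description of the Iwasawa map, carried out with integral models. Since $G$ is split and adjoint, the fundamental weights $\varpi_\alpha$ ($\alpha\in\Delta$) need not lie in $X^*(T)$. They do lie in $X^*(T)\otimes_{\mathbb Z}\mathbb Q$, however, so I would choose positive integers $N_j$ with $N_j\varpi_{\alpha_j}\in X^*(T)$ and put $\lambda_j=\varpi_{\alpha_j}$, with $s=r$. These are dominant and span $X^*(T)\otimes\mathbb Q$. For each $j$ take the Weyl module (or the irreducible module) $V_j$ of highest weight $N_j\lambda_j$, defined over $\mathbb Z$ through a Kostant $\mathbb Z$-form attached to the Chevalley basis already fixed in Section~7. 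This gives an $F$-rational $\rho_j$ together with a lattice $\Lambda_j=V_{j,\mathbb Z}\otimes\mathcal O_F$. The lattice is the direct sum of its weight components, and one of these is $\mathcal O_F v_j$ with $v_j$ a highest-weight vector.

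Next I would define $\ell_j(w)$ as the minimum of the valuations of the coordinates of $w$ in a weight basis of $\Lambda_j$; equivalently $\ell_j(w)=\max\{n:w\in\varpi^n\Lambda_j\}$. This function is invariant under every element of $G(F)$ that stabilizes $\Lambda_j$.

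The computation is then immediate. Write $g=utk$, so that
\[
 \rho_j(g^{-1})v_j=\rho_j(k^{-1})\rho_j(t^{-1})\rho_j(u^{-1})v_j .
\]
The factor $u^{-1}\in U(F)$ fixes the highest-weight vector. The factor $t^{-1}$ scales it by $(N_j\lambda_j)(t)^{-1}$. If $k$ stabilizes $\Lambda_j$, then $k^{-1}$ preserves $\ell_j$. Hence
\[
 \ell_j(\rho_j(g^{-1})v_j)=-N_j\val(\lambda_j(t))=N_j\langle\lambda_j,H_B(g)\rangle,
\]
using the normalization of $H_B$ fixed above. The same calculation also shows that the right-hand side is independent of the chosen Iwasawa factorization.

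The main obstacle is compatibility with the given special maximal compact $K$. If $K$ is the hyperspecial subgroup $G(\mathcal O_F)$ attached to the Chevalley lattice, then $K$ stabilizes every $\Lambda_j$ and the argument above is complete. Otherwise, I would first try to conjugate $K$ into this hyperspecial position by an element of $T(F)$ or of the extended Weyl group normalizing $T$; this changes $H_B$ only by a constant translation, which can be absorbed into the choice of $v_j$. Failing that, I would replace $\Lambda_j$ by the $K$-stable lattice $\Lambda_j'=\sum_{k\in K/(K\cap G(\mathcal O_F))}k\Lambda_j$, a finite sum. I would then check that $\Lambda_j'$ is still $T(\mathcal O_F)$-stable and splits into weight components. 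That splitting is what makes the $U$- and $T$-steps of the computation go through unchanged, and it is the point where I expect the residue-characteristic hypothesis, or a further passage to multiples $N_j$, to be needed.
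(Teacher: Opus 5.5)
Your main computation is the paper's proof: write $g=utk$, let $U(F)$ fix $v_j$, let $T(F)$ scale it, and use $K$-invariance. The paper simply asserts that $K$ preserves the lattice norm. The trouble is in your last paragraph. The ``main obstacle'' there is a misdiagnosis, and as written it leaves the general-$K$ case hanging on a check you do not need.

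Nothing in the computation uses that the lattice splits into weight components. It uses only:
\begin{enumerate}[label=\textup{(\roman*)},leftmargin=2.3em]
\item the vector identities $\rho_j(u^{-1})v_j=v_j$ and $\rho_j(t^{-1})v_j=(N_j\lambda_j)(t)^{-1}v_j$;
\item the homogeneity $\ell(cw)=\val(c)+\ell(w)$, which holds for every lattice norm;
\item $K$-invariance of $\ell_j$;
\item the normalization $\ell_j(v_j)=0$.
\end{enumerate}
So for an arbitrary special $K$, take any $K$-stable lattice $\Lambda$. Your $\Lambda_j'$ already is one. So is $\sum_{k\in K}\rho_j(k)\Lambda_0$ for any lattice $\Lambda_0$; this is a finite sum, because the stabilizer of $\Lambda_0$ is open in the compact group $K$. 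Then rescale $v_j$ by a power of $\varpi$ so that $v_j\in\Lambda\setminus\varpi\Lambda$. That finishes the proof. You need no weight splitting, which in any case need not hold for a general $T(\mathcal O_F)$-stable lattice. You also need no residue-characteristic hypothesis and no enlargement of the $N_j$.

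Two smaller points.

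First, conjugating $K$ by $t_0\in T(F)$ does not translate $H_B$ by a constant. If $K=t_0K_0t_0^{-1}$, then $H_{B,K}(g)=H_{B,K_0}(gt_0)-H_B(t_0)$, which is a right translate. Already for $\operatorname{PGL}_2$ with $t_0=\diag(\varpi,1)$, the difference $H_{B,K}-H_{B,K_0}$ vanishes at $g=1$ but not at the Weyl element. What does hold, and is all you need, is that $\rho_j(t_0)\Lambda_j$ is $K$-stable and changes $\ell_j(v_j)$ only by a constant. That is again just the rescaling above.

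Second, the statement requires $\lambda_j\in X^*(T)$, and for adjoint $G$ the fundamental weight $\varpi_{\alpha_j}$ generally is not in $X^*(T)$. Simply take $N_j\varpi_{\alpha_j}$ as the new $\lambda_j$, with $N_j=1$.
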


\begin{proof}
Choose the representations and primitive highest-weight vectors as in the
standard highest-weight construction.  If $g=utk$, then $U$ fixes $v_j$ and
$K$ preserves the lattice norm, so
\[
 \rho_j(g^{-1})v_j
 =(N_j\lambda_j)(t)^{-1}\rho_j(k^{-1})v_j
\]
Taking valuations gives the formula.
\end{proof}

\begin{lemma}[Piecewise-affine heights on a Jacobi branch]
\label{lem:piecewise-iwasawa}
Fix $\xi\in\mathfrak B_A$ and $g_0\in G(F)$.  The descent datum changes each
geometric square-root coordinate $z_\alpha$ only by an element of $A[2]$;
its valuation is therefore well defined on the twisted branch.  After a
finite clopen refinement of $\cS_\xi(F)$ and a finite rational polyhedral
subdivision of the valuation vectors of the square-root coordinates, every component of
\[
 H_B(u_\xi(q)g_0)
\]
is a rational affine function of
\[
 \val(z_\alpha),\qquad \alpha\in\Delta
\]
Equivalently it is rational affine in the valuations of the $q_\alpha$, since
$q_\alpha=z_\alpha^2$.
\end{lemma}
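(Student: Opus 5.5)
We reduce the statement to \cref{lem:highest-weight-iwasawa}.  For each $j$ the function
\[
 g\longmapsto \ell_j\bigl(\rho_j(g^{-1})v_j\bigr)=N_j\langle\lambda_j,H_B(g)\rangle
\]
recovers the components of $H_B$, since the $\lambda_j$ span $X^*(T)\otimes\mathbb Q$.  It therefore suffices to show that each
\[
 \ell_j\bigl(\rho_j(g_0^{-1}u_\xi^{-1})v_j\bigr)
\]
is piecewise rational affine in $\val(z_\alpha)$ after a finite clopen refinement.  First we record that $\val(z_\alpha)$ is well defined on $\cS_\xi(F)$.  Over $\Fb$ the twisted action \eqref{eq:twisted-square-root-action} changes $t$ by $c_\sigma\in A[2]$, hence $z_\alpha=\alpha(t)^{-1}$ changes by $\pm1$.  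Moreover $\abs{z_\alpha}^2=\abs{q_\alpha}$, with $q_\alpha$ an $F$-rational function on the branch.

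Next we write $u_\xi^{-1}=t^{-1}h_\xi^{-1}$, using \eqref{eq:twisted-u-formula} with $h_\xi\in H(\Fb)$ constant.  Then
\[
 \rho_j(g_0^{-1}u_\xi^{-1})v_j=\rho_j(g_0^{-1})\rho_j(t^{-1})w_j,
 \qquad w_j=\rho_j(h_\xi^{-1})v_j ,
\]
where $w_j$ is a fixed vector.  Decompose $w_j=\sum_\mu w_{j,\mu}$ into $A$-weight vectors.  The torus $A$ acts on $w_{j,\mu}$ by the character $\mu(t)^{-1}$.  Writing $\mu=\sum_\alpha m_\alpha\alpha$ in simple-root coordinates ($G$ is adjoint, so weights of $\rho_j$ lie in the root lattice up to the fixed highest weight, after possibly replacing $N_j$ by a multiple), we get
\[
 \mu(t)^{-1}=\prod_\alpha z_\alpha^{m_\alpha},
\]
a Laurent monomial in the $z$.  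Thus the vector equals
\[
 \sum_\mu\Bigl(\prod_\alpha z_\alpha^{m_\alpha}\Bigr)\rho_j(g_0^{-1})w_{j,\mu}.
\]
Fix an $\mathcal O$-basis $\{e_k\}$ of the lattice defining $\ell_j$.  Each coordinate of this vector is a Laurent polynomial
\[
 \sum_\mu c_{k,\mu}\,z^{m(\mu)},
\]
with fixed coefficients in a finite extension of $F$.  The norm $\ell_j$ is then the minimum over $k$ of the valuations of these coordinates.

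The main obstacle is that the valuation of a sum of monomials is not in general the minimum of the monomial valuations; cancellation can occur.  We handle this by the standard ultrametric argument.  Consider the rational polyhedral fan on $\mathbb R^\Delta$ on whose cones a unique monomial attains the minimum of $\val(c_{k,\mu})+\langle m(\mu),\val z\rangle$ for each $k$.  On such a cone the coordinate has exactly that valuation.  The interior of each maximal cone is therefore fine.  The lower-dimensional walls, where ties occur, carry finitely many integral affine equalities.  On a wall we refine clopenly by the angular components, i.e.\ the residues of $z_\alpha\varpi^{-\val z_\alpha}$ in the finite group $\mathcal O^\times/(1+\varpi^M\mathcal O)$.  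Once the leading coefficients are fixed, the tie either does not cancel, and the valuation is the common minimum, or it cancels to a fixed depth.  In the latter case we iterate on the next-order terms; the finitely many monomials make this terminate.  Here one must check that only finitely many angular-component levels $M$ are needed uniformly.  This follows because the coefficient set is finite and $h_\xi$ is fixed, so one works with a single $M$ bounded by the valuations of the differences of the finitely many $c_{k,\mu}$.  Residue characteristic $\ne2$ keeps the $\pm$ ambiguity in $z_\alpha$ compatible with these angular classes.

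Taking the minimum over $k$ of finitely many affine functions gives a piecewise-affine function, affine after a further polyhedral subdivision.  Dividing by $N_j$ and inverting the change of basis $\{\lambda_j\}$ yields rational affine components of $H_B(u_\xi g_0)$ on each piece.  The final clause follows from $\val q_\alpha=2\val z_\alpha$.
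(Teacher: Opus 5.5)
Your setup matches the paper's: the reduction to \cref{lem:highest-weight-iwasawa}, the factorization $u_\xi^{-1}=t^{-1}h_\xi^{-1}$ with $w_j=\rho_j(h_\xi^{-1})v_j$ fixed, and the expansion $\rho_j(t^{-1})w_j=\sum_\mu z^{m(\mu)}w_{j,\mu}$ in Laurent monomials. The gap is in the cancellation step.

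You treat each lattice coordinate $\sum_\mu c_{k,\mu}z^{m(\mu)}$ separately. You then assert that on a tie locus, once angular components are fixed at a uniform level $M$, the sum either does not cancel or cancels to a fixed depth. This is false for a single coordinate. For a binomial such as $z_\alpha-1$ on the wall $\val(z_\alpha)=0$, the valuation $\val(z_\alpha-1)$ is unbounded. It is not determined by the angular component of $z_\alpha$ at any fixed level. There are no further ``next-order terms'' whose iteration terminates, and the depth of cancellation depends on the point $z$, not only on the finitely many coefficients $c_{k,\mu}$. So individual coordinate valuations are in general not piecewise affine on finitely many clopen pieces. Your final step, a minimum over $k$ of finitely many affine functions, therefore has no valid input.

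The missing idea concerns the whole vector. Since $\rho_j(g_0^{-1})$ is a fixed invertible map, $x\mapsto\ell_j(\rho_j(g_0^{-1})x)$ is a lattice norm comparable to a norm $\ell_A$ adapted to the $A$-weight decomposition. For $\ell_A$ there is no cancellation at all: $\ell_A(\rho_j(t^{-1})w_j)=\min_{\mu:\,w_{j,\mu}\ne0}\bigl(c_\mu+\val(z^{m(\mu)})\bigr)$. Individual coordinates can cancel deeply, but not all of them simultaneously. The difference of the two norms is scale-invariant and locally constant on the compact space $\mathbb P(V_j)(E)$, so it takes finitely many values on a finite clopen partition. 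Pulling this partition back along $t\mapsto[\rho_j(t^{-1})w_j]$ gives the finite clopen refinement. The polyhedral subdivision then only has to separate the affine functions in the minimum. This is the paper's argument.

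If you prefer angular components, the correct uniform level is a Lebesgue number for that partition. It is controlled by how much $\rho_j(g_0^{\pm1})$ distort the lattices, not by differences of the $c_{k,\mu}$, and it must be applied to the projective class of the whole vector rather than coordinatewise.

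A smaller point: on a twisted branch the $z_\alpha$ need not be $F$-rational and may have half-integral valuation. Angular components therefore have to be taken over a finite extension $E$ trivializing the twist, as in the paper. Your remark about residue characteristic $\ne2$ does not address this.
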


\begin{proof}
Choose a finite extension $E/F$ which trivializes the twist and write over
$E$
\[
 u_\xi=h_\xi t,
 \qquad h_\xi\in H(E),
 \qquad \alpha(t)=z_\alpha^{-1}
\]
Apply \cref{lem:highest-weight-iwasawa}.  For one of the representations put
\[
 w=\rho(h_\xi^{-1})v
   =\sum_\mu w_\mu
\]
according to the $A$-weight decomposition.  Choose an $E$-lattice norm
$\ell_A$ adapted to this decomposition.  Then
\begin{equation}\label{eq:weight-adapted-minimum}
 \ell_A(\rho(t^{-1})w)
 =\min_{\mu:w_\mu\ne0}
   \left(c_\mu-\val_E(\mu(t))\right)
\end{equation}
where the $c_\mu$ are constants.  Since $G$ is adjoint, every $\mu(t)$ is a
Laurent monomial in the $z_\alpha$, so the expressions in the minimum are
affine functions of their valuations.

The norm
\[
 v'\longmapsto
 \ell(\rho(g_0^{-1})v')
\]
is another lattice norm on $V(E)$.  The difference of two lattice norms is
invariant under scalar multiplication and is locally constant on
$\mathbb P(V)(E)$.  Since this projective space is compact, it has finite
image and admits a finite clopen partition on which the difference is
constant.  Pulling that partition back along
$t\mapsto[\rho(t^{-1})w]$ shows that the norm relevant to the Iwasawa map is,
on each clopen piece, the minimum in \eqref{eq:weight-adapted-minimum} plus a
constant.  The finitely many equality hyperplanes between the affine
functions in that minimum give the required polyhedral subdivision.  Dividing
by the ramification index of $E/F$ returns the normalization of $F$-valuations.
Applying this simultaneously to the finitely many $\lambda_j$ determines
$H_B$.
\end{proof}

\begin{proposition}\label{prop:toroidal-weights}
Let $w(g,\mu)$ be a truncation weight with finitely many truncation chambers,
each defined by affine inequalities in $\mu$ and finitely many Iwasawa-height
functions of $g$, and suppose that on each chamber $w$ is polynomial in these
variables.
On every twisted Jacobi branch, after a finite clopen refinement and a finite
rational polyhedral subdivision in the joint variables
\[
 \bigl(\mu,(\val z_\alpha)_{\alpha\in\Delta}\bigr)
\]
the pullback $w(u_\xi(q)g_0,\mu)$ is polynomial in those variables on each
piece.
\end{proposition}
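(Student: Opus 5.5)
The plan is to reduce the statement to \cref{lem:piecewise-iwasawa} and then take a common refinement of finitely many polyhedral decompositions. Let $h_1(g),\ldots,h_m(g)$ be the finitely many Iwasawa-height functions of $g$ entering the definition of $w$. Each $h_i$ is a fixed rational linear functional of $H_B(g)$; by \cref{lem:highest-weight-iwasawa} we may take them to be linear combinations of the components $\langle\lambda_j,H_B(g)\rangle$. Applying \cref{lem:piecewise-iwasawa} to $g=u_\xi(q)g_0$ gives a finite clopen partition of $\cS_\xi(F)$ and a finite rational polyhedral subdivision $\Sigma_0$ of the valuation space $\mathbb Q^\Delta\ni(\val z_\alpha)_\alpha$. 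On each piece, every $h_i(u_\xi(q)g_0)$ equals a rational affine function $A_i(\val z)$. We fix one such clopen piece $P$ together with one cone or cell $\sigma$ of $\Sigma_0$, and work on $P\cap\{\val z\in\sigma\}$.

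Next we transport the truncation chambers. Each chamber of $w$ is cut out by finitely many affine inequalities $L_k(\mu,h(g))\ge0$ or $>0$, with $L_k$ affine in $\mu$ and in the heights. After substituting $h_i=A_i(\val z)$, each $L_k$ becomes a rational affine function of the joint variable $(\mu,\val z)$. The hyperplanes $\{L_k=0\}$ are finite in number, and so are the chambers, so together with the cylinders $\mathbb Q^{\mathrm{rk}}\times\sigma$ they define a finite rational polyhedral subdivision $\Sigma$ of the joint space. We choose $\Sigma$ so that each of its cells lies inside a single truncation chamber of $w$. Finally, on such a cell $w$ is a polynomial in $\mu$ and the $h_i$. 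Composing that polynomial with the affine substitution $h_i\mapsto A_i(\val z)$ yields a polynomial in $(\mu,\val z)$, which is the desired conclusion on that cell.

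The main obstacle is bookkeeping about which variables each refinement involves. The clopen refinement is made in $\cS_\xi(F)$, i.e.\ in the orbital and chart variable, and it must not depend on $\mu$. This does hold, because \cref{lem:piecewise-iwasawa} does not involve $\mu$ and $g_0$ is held fixed. The chamber conditions, by contrast, mix $\mu$ with $\val z$, which is why the subdivision has to be taken in the joint space and not in $\val z$ alone. Two further points need checking. First, $\val z_\alpha$ is well defined on the twisted branch, as noted in \cref{lem:piecewise-iwasawa}, and its values lie in $\tfrac{1}{e}\mathbb Z$ after normalization; rational affine functions remain rational on this lattice, so no integrality issue arises. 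Second, boundary conventions (strict versus non-strict inequalities) are handled by allowing relatively open cells in the subdivision; no new analytic input is required.
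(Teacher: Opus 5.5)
Your argument is correct and is essentially the paper's proof. Both apply \cref{lem:piecewise-iwasawa} to make the heights affine in $\val z$ on clopen pieces, substitute these into the chamber inequalities to obtain affine conditions in $(\mu,\val z)$, intersect the resulting chambers, and compose the chamberwise polynomial with the affine substitution. The one difference is that the paper applies the lemma simultaneously to all the finitely many minimal parabolics whose heights occur in $w$. You instead assumed that every height is a functional of $H_B$ for the single fixed $B$. The lemma's argument applies equally to each such parabolic, and your common-refinement step already absorbs the extra partitions this produces.
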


\begin{proof}
Apply \cref{lem:piecewise-iwasawa} simultaneously to the finitely many
minimal parabolics and height components occurring in $w$.  The original
truncation inequalities become affine inequalities in the joint variables.
Intersecting their chambers with the polyhedral chambers from the lemma and
substituting the resulting affine height functions into the chamberwise
polynomial formula gives the assertion.
\end{proof}

\begin{corollary}\label{cor:weighted-mechanism}
For every rational Cartan-lift branch the pullback of the
$H^\circ$-quotient density is the logarithmic density
$C_\xi\abs{\omega_{\log,\xi}}$; after geometric trivialization this is product
multiplicative Haar measure in the square-root Jacobi coordinates.  For the
full $H$-quotient the same statement holds locally up to the finite-quotient
factor described above.  The additional Iwasawa-height dependence introduced
by the Cartan lift is, for each fixed orbital variable and after a finite
clopen refinement, piecewise polynomial in the coordinate valuations.  No
uniform polyhedral subdivision in the orbital variable is asserted here.
\end{corollary}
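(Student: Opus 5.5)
The corollary is a repackaging of three earlier results, and I would prove it as three assertions taken in order.

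First, the density statement on a rational branch. Fix $\xi\in\mathfrak B_A$. By \cref{thm:descent-charts}\textup{(c)}, the branch carries the $F$-morphism $u_\xi$ and the family $Y_\xi=\Ad(u_\xi)Q\in\h$. Part \textup{(d)} of the same theorem then gives the identity $ds/\abs{D_H^G}^{1/2}=C_\xi\abs{\omega_{\log,\xi}}$ for the $H^\circ$-quotient.

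To see the Haar-measure interpretation, trivialize the twist over a finite extension $E$. There $u_\xi=h_\xi t$ with $h_\xi$ constant, and $\omega_{\log}=\bigwedge_\alpha dz_\alpha/z_\alpha$. Its absolute value is $\prod_\alpha\abs{dz_\alpha}/\abs{z_\alpha}$, which is a constant multiple of $\prod_\alpha\dmeas z_\alpha$. This is exactly the content of \cref{thm:jacobi-log-density}, transported by the $H$-invariance of $D_H^G$ and of the quotient density.

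Second, the full $H$-quotient. Here I would quote the finite-quotient clause of \cref{thm:descent-charts}\textup{(d)}. On sufficiently small $F$-analytic charts in the finite \'etale locus of $\h/\!/H^\circ\to\h/\!/H$, the Jacobian has locally constant nonzero absolute value. The statement therefore holds locally, up to that factor.

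Third, the weight statement. Fix the orbital variable $g_0\in G(F)$. \cref{lem:piecewise-iwasawa} gives a finite clopen refinement of $\cS_\xi(F)$ and a polyhedral subdivision on which each component of $H_B(u_\xi g_0)$ is affine in $(\val z_\alpha)_{\alpha\in\Delta}$. \cref{prop:toroidal-weights} then upgrades this to piecewise polynomiality of the pulled-back truncation weight in the joint variables $(\mu,\val z_\alpha)$.

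The main point needing care is that the refinement depends on $g_0$. In \cref{lem:piecewise-iwasawa} the clopen partition comes from the locally constant difference of the lattice norms $\ell\circ\rho(g_0^{-1})$ and $\ell_A$ on $\mathbb P(V)(E)$, and this difference varies with $g_0$. So I would state the conclusion pointwise in $g_0$, which is why the corollary explicitly disclaims any uniform subdivision. I would also note that $\val z_\alpha$ is well defined on the twisted branch, since the descent datum only changes each $z_\alpha$ by a sign.
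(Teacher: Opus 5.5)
Your proposal is correct and matches the paper's route. The paper gives no separate argument for this corollary: it is read off from \cref{thm:descent-charts}(d) for the $H^\circ$- and $H$-density statements, and from \cref{lem:piecewise-iwasawa} together with \cref{prop:toroidal-weights} for the piecewise-affine heights and piecewise-polynomial weights, with the $g_0$-dependence coming from the lattice-norm partition as you describe.
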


The lower-dimensional faces of this polyhedral decomposition are natural
candidates for comparison with descent to proper centralizers, paralleling the
role of proper Levi subgroups in the lower-degree terms of Arthur's polynomial
weights.  Establishing such a correspondence requires the analytic descent
and neighborhood analysis deferred to the follow-up paper.

\subsection{Analytic outlook: quotient neighborhoods, induced weights, and rational-orbit covers}
Fix $\xi\in\mathfrak B_A$ and work on an $F$-analytic chart of the twisted
Cartan-lift cover $\cS_\xi$.  Write $u=u_\xi(q)$ and
$Y(q)=\Ad(u_\xi(q))Q(q)$.  Thus nontrivial Kummer classes are treated by
honest rational families rather than by pointwise choices of lifts.

The analytic role of these charts is best described without taking a weighted
germ expansion as the starting point.  Let $\Omega_\epsilon$ be a sufficiently small neighborhood of
the nilpotent point in the relevant part of $\h/\!/H$, chosen as part of the
second truncation, and let $\Omega_{\epsilon,\xi}$ denote its pullback to the
$\xi$-branch.  In the equal-rank case, \cref{thm:descent-charts} rewrites the
quotient density on this branch as a constant multiple of
$\abs{\omega_{\log,\xi}}$.  After conjugating
$Y_\xi=\Ad(u_\xi)Q$ back to $Q$, the corresponding contribution has,
schematically, the form
\begin{equation}\label{eq:induced-weight-schematic}
 \int_{\Omega_{\epsilon,\xi}}
   \abs{\omega_{\log,\xi}(z)}
   \int_{G_{Q(z^2)}(F)\backslash G(F)}
      \widehat f(g^{-1}Q(z^2)g)\,
      \overline\omega(u_\xi(z)g,\mu)\,dg 
\end{equation}
Fixed discriminant and measure normalizations are suppressed in this
schematic expression.  The family $Q(z^2)$ extends to the principal nilpotent
element $Q(0)$, while \cref{lem:piecewise-iwasawa,prop:toroidal-weights}
make the shifted truncation function, for each fixed $g$, piecewise polynomial
in the valuation variables after a finite clopen refinement and on a finite
polyhedral subdivision.  The refinement and subdivision may depend on $g$;
the uniformity required to change the order of integration belongs to the
analytic work deferred below.

The next analytic step is therefore an integration problem over the excised
quotient neighborhood, rather than the construction of a weighted germ as an
independent local object.  One must choose neighborhoods for which the
pullbacks $\Omega_{\epsilon,\xi}$ are tractable in the valuation coordinates,
justify the required changes in the order of integration, and identify the
result with weighted nilpotent orbital terms.  Schematically the induced
weight has the form
\[
 W_{\epsilon,\xi}(g,\mu)
 =\int_{\Omega_{\epsilon,\xi}}
    \overline\omega(u_\xi(z)g,\mu)\,
    \abs{\omega_{\log,\xi}(z)}
\]
with the precise domain and limiting nilpotent contribution depending on the
stratum under consideration.  A central sequence of lemmas in the follow-up paper
will characterize the relevant quotient neighborhoods and their Jacobi
pullbacks as finite unions of sets described by affine inequalities in
valuation coordinates.  Combined with the logarithmic density and the
piecewise-affine height formulas proved here, this is the mechanism expected
to produce the polynomial weights.

There remain rational-orbit questions which are logically separate from this
neighborhood integration.  Before the integration in stages of
\eqref{eq:fixed-combined-orbit}, a regular semisimple $x\in\h(F)$ has rational
$H(F)$-orbits inside its stable $H$-orbit parametrized by
\[
 \ker\left(H^1(F,H_x)\longrightarrow H^1(F,H)\right)
\]
This is the rational-orbit cover naturally attached to the original section
of $\h/\!/H$.  After passage to ambient $G$-orbits, the rational $G(F)$-orbits
in a stable regular semisimple orbit are parametrized by
\[
 \ker\left(H^1(F,G_x)\longrightarrow H^1(F,G)\right)
\]
and the rational nilpotent orbits in the stable orbit of a nilpotent element
$e$ are parametrized by
\[
 \ker\left(H^1(F,G_e)\longrightarrow H^1(F,G)\right)
\]
The finite quotient cover and the twisted Cartan-lift covers in
\cref{thm:descent-charts} serve a different but complementary purpose: they
give explicit local branches of the $H$-side section together with rational
Cartan lifts carrying those branches to a controlled ambient degeneration.
They should not be identified with either of the full rational-orbit covers.

Ordinary Shalika-germ theory remains relevant as background for the local
nilpotent singularity and its homogeneity; see Shalika \cite{Shalika} and, for
symmetric spaces, Flicker \cite{Flicker}.  Xue proves an ordinary relative
germ expansion for $\GL_n\times\GL_n\backslash\GL_{2n}$ \cite{Xue}, while
Chen proves homogeneity and parabolic descent for Arthur--Shalika germs of
weighted orbital integrals in the group case \cite{ChenWeighted}.  These
results provide useful comparisons, but the second-truncation mechanism
anticipated here runs in the opposite direction: the weight on the nilpotent
term is produced by integrating the original truncation function over a
small quotient neighborhood.

\section{The symplectic case: an explicit calculation}

Consider
\[
 (G,H)=(\Sp_{2n},\GL_n)
\]
the symmetric pair usually called type CI.  This illustrative example has
two purposes.  First, it identifies the
symplectic matrix $u_\gamma$ as a product of rank-one Cayley sections.
Second, it gives the additional Hurwitz-coordinate calculation that makes the
relative quotient measure logarithmic.  The detailed matrix identities show how the general descent mechanism can
be implemented explicitly and provide a concrete comparison with the abstract
Jacobi charts of Section~8.  The calculations of this section can also be found
in \cite{SparlingThesis}.

\subsection{The symmetric pair}
Let
\[
 J=\begin{pmatrix}0&I_n\\-I_n&0\end{pmatrix}
 \qquad
 G=\Sp_{2n}(F)
\]
Put
\[
 \varepsilon=\begin{pmatrix}0&I_n\\I_n&0\end{pmatrix}
 \qquad
 \theta(g)=\varepsilon g\varepsilon^{-1}
\]
The fixed Lie algebra is isomorphic to $\glie_n$ through
\begin{equation}\label{eq:typeCI-iota}
 \iota(X)=\frac12
 \begin{pmatrix}
 X-X^{\trans}&X+X^{\trans}\\
 X+X^{\trans}&X-X^{\trans}
 \end{pmatrix}
\end{equation}

For $X\in\glie_n$, write
\[
 \det(\lambda-X)=\lambda^n+s_1(X)\lambda^{n-1}+\cdots+s_n(X)
\]
The companion section is
\begin{equation}\label{eq:typeCI-companion}
 S(s)=
 \begin{pmatrix}
 -s_1&-s_2&\cdots&-s_n\\
 1&0&\cdots&0\\
 &\ddots&\ddots&\vdots\\
 0&&1&0
 \end{pmatrix}
\end{equation}

\subsection{Hurwitz pivots and toroidal variables}
Let $H(s)$ be the Hurwitz matrix of
\[
 \lambda^n+s_1\lambda^{n-1}+\cdots+s_n
\]
namely the matrix whose first two rows are
\[
 (s_1,s_3,s_5,\ldots),\qquad (1,s_2,s_4,\ldots)
\]
and whose subsequent rows are obtained by shifting these two rows successively
to the right; coefficients beyond $s_n$ are understood to be zero.  Let
$P_i(s)$ be the leading $i\times i$ principal minor of $H(s)$, and put
$P_0=1$.  On the open set $P_1\cdots P_n\neq0$, define
\[
 x_i=\frac{P_i}{P_{i-1}},
 \qquad
 q_1=x_1,
 \quad q_2=x_2,
 \quad q_i=\frac{x_i}{x_{i-2}}\;(i\geq3)
\]
Equivalently,
\[
 x_1=q_1,
 \qquad x_2=q_2,
 \qquad x_i=q_ix_{i-2}
\]

Define the tridiagonal matrix
\begin{equation}\label{eq:typeCI-Q}
 Q(q)=
 \begin{pmatrix}
 -q_1&-q_2&&&&\\
 1&0&-q_3&&&\\
 &1&0&-q_4&&\\
 &&\ddots&\ddots&\ddots&\\
 &&&1&0&-q_n\\
 &&&&1&0
 \end{pmatrix}
\end{equation}
Its characteristic polynomial is the continuant
\[
 \det(\lambda-Q(q))
 =\lambda^n+f_1(q)\lambda^{n-1}+\cdots+f_n(q)
\]
where $f_k(q)$ is the weighted sum of matchings covering $k$ vertices in the
path with a loop of weight $q_1$ at the first vertex and edge weights
$q_2,\ldots,q_n$.

\begin{theorem}\label{thm:typeCI-chart}
On the open set $P_1\cdots P_n\neq0$, the map $s\mapsto q$ is an isomorphism
with $\Gm^n$.  Its inverse is
\[
 s_k=f_k(q)
\]
Moreover, $S(s(q))$ and $Q(q)$ are conjugate over $F$.
\end{theorem}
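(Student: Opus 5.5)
The plan is to treat the three assertions separately, starting with the easiest. \emph{Conjugacy.} The tridiagonal matrix $Q(q)$ in \eqref{eq:typeCI-Q} has all subdiagonal entries equal to $1$. Hence the last standard basis vector (or, reading the other way, the first) is a cyclic vector, so $Q(q)$ is regular in $\glie_n$. The companion matrix $S(s)$ is regular for the same reason. Two regular matrices with the same characteristic polynomial are conjugate over $F$ by rational canonical form. So once $s_k=f_k(q)$ is established, conjugacy of $S(s(q))$ and $Q(q)$ over $F$ follows with no cohomological input. The $F$-rationality is automatic here because $H^1(F,\GL_n)$ and the centralizer cohomology play no role for companion forms.

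\emph{The continuant recursion.} Expanding $\det(\lambda-Q(q))$ along the last row gives
\[
 p_k(\lambda)=\lambda p_{k-1}(\lambda)+q_kp_{k-2}(\lambda),\qquad p_0=1,\quad p_1=\lambda+q_1,
\]
where $p_k$ is the characteristic polynomial of the leading $k\times k$ block. This recursion is exactly the matching description of the $f_k$. The core identity to prove is that the Hurwitz minors of $p_n$ are monomials:
\[
 P_i(f(q))=x_1x_2\cdots x_i,\qquad x_1=q_1,\quad x_2=q_2,\quad x_i=q_ix_{i-2}.
\]
Equivalently, $P_i/P_{i-1}=x_i$ on the image of $q\mapsto f(q)$. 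Granting this, the two maps are mutually inverse on the stated opens. The map $q\mapsto s=f(q)$ sends $\Gm^n$ into $\{P_1\cdots P_n\ne0\}$, since the $P_i$ are products of the $q_j$. The assignment $s\mapsto q$ defined by the pivots recovers $q$ from $f(q)$. For the other composite, I would show that $s\mapsto q$ is injective on $\{P_1\cdots P_n\neq0\}$. The reason is that the Routh--Hurwitz reduction reconstructs the polynomial from its pivots $x_i$ uniquely: this is the classical Stieltjes/Routh continued fraction of the ratio of the odd and even parts. Both composites are therefore the identity, and both maps are morphisms, so the map is an isomorphism.

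\emph{Main obstacle: the pivot identity.} I expect the identity $P_i(f(q))=\prod_{j\le i}x_j$ to be the real work. I would prove it by induction on $n$ using one Routh step. Writing $p_n=E+O$ as its even and odd parts in $\lambda$, a row reduction of the Hurwitz matrix eliminates the first column. This expresses the Hurwitz minors of $p_n$ as $P_1$ times the Hurwitz minors of a degree-$(n-1)$ polynomial $\tilde p$, namely the Routh remainder. The step is to check, via the three-term recursion, that $\tilde p$ is itself a continuant of the same shape with shifted and rescaled parameters, so that the inductive hypothesis applies. The parameters should be $(q_2,q_3/q_1,\dots)$ or a similar monomial substitution; that specific form is a guess I expect to adjust. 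The $x_i=q_ix_{i-2}$ pattern, with its step of two, strongly suggests that the shift interleaves odd and even indices. If the reindexing proves messy, the fallback is a direct proof. Both sides are polynomial in $q$, so I would verify the identity on the dense set where $Q(q)$ is conjugated by the diagonal torus to the symmetric Jacobi form, and compute the Hurwitz determinants there as Gram/Hankel determinants of moments, using the Stieltjes correspondence. A small-$n$ check ($n=2,3$) would be done first to pin down the exact substitution.
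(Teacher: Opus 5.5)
Your proposal is correct and follows essentially the paper's route: the continuant recurrence, the Hurwitz--Stieltjes pivot identity $P_i/P_{i-1}=x_i$ checked one Routh elimination step at a time (the paper cites this and notes the inductive verification), recovery of $s$ from the pivots, and cyclicity of $Q(q)$ for the $F$-rational conjugacy. For your induction, the correct substitution is not your guess: the Routh remainder is $\tilde p=q_1\,p_{n-1}(q_2/q_1,q_3,\ldots,q_n)$, so only the loop weight changes, and since $P_k(c\,p)=c^kP_k(p)$ the new pivots are $x'_i=x_{i+1}/q_1$, which closes the induction.
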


Thus the Hurwitz variables give an explicit toroidal quotient chart on a big
cell of cyclic, hence regular, elements.  This cell need not consist of
semisimple elements.  Its intersection with the common regular-semisimple
locus is the domain relevant to the fixed-side Weyl expansion in Section~8.

\begin{proof}
Expanding the tridiagonal determinant gives
\[
 p_m(\lambda)=\lambda p_{m-1}(\lambda)+q_mp_{m-2}(\lambda)
\]
with $p_0=1$ and $p_1=\lambda+q_1$.  The matching polynomials satisfy the
same recurrence, so the characteristic coefficients of $Q(q)$ are the
$f_k(q)$.

We recall the corresponding Hurwitz elimination.  On the open set on which
all leading Hurwitz minors are nonzero, elimination without row exchanges has
diagonal pivots $P_i/P_{i-1}=x_i$.  Applied to a polynomial satisfying the
preceding continuant recurrence, the same elimination removes its first
continued-fraction coefficient at each step and gives
\[
 x_1=q_1,\qquad x_2=q_2,\qquad x_i=q_ix_{i-2}\quad(i\geq3)
\]
This is the finite Hurwitz--Stieltjes elimination
\cite{Gantmacher,HoltzTyaglov}; it may also be verified inductively by applying
one elimination step to the even and odd coefficient rows of the Hurwitz
matrix.  In coefficient coordinates those steps are successive shears with
unipotent derivative, so the map $s\mapsto x$ has Jacobian one.  Conversely these relations recover every
$q_i$, and the continuant recurrence recovers every coefficient $s_k=f_k(q)$.
Thus $s\mapsto q$ and $q\mapsto(f_1(q),\ldots,f_n(q))$ are inverse on the big
cell.  Finally, $Q(q)$ is cyclic, so it is conjugate over $F$ to the companion
matrix with the same characteristic polynomial.
\end{proof}

\subsection{The logarithmic measure}
The fixed-side orbital-integral application is made on the further open subset
where the characteristic polynomial has nonzero discriminant.  On that common
regular-semisimple locus, \eqref{eq:weighted-weyl-normalized} gives the quotient
density
\[
 \frac{ds}{\abs{D_H^G(\iota(S(s)))}^{1/2}}
\]
On the full Hurwitz big cell we use $D_H^G$ for the polynomial relative
determinant $\det(\ad(\iota(S(s)));\p)$, which agrees with $D^G/D^H$ on the
common regular-semisimple locus.  It satisfies
\[
 \abs{D_H^G(\iota(S(s)))}^{1/2}
 =C\prod_{i\leq j}\abs{h_i+h_j}
 =C'\abs{P_n(s)}
\]
for fixed nonzero constants $C,C'$ (with the present normalization of
$\iota$, their ratio accounts for a fixed power of $2$); here $h_i$ are the
eigenvalues of $S(s)$.  The elimination defining the
$x_i$ has Jacobian one.

\begin{theorem}\label{thm:typeCI-measure}
For compatible Haar measures,
\begin{equation}\label{eq:typeCI-measure}
 \frac{ds_1\cdots ds_n}
 {\abs{D_H^G(\iota(S(s)))}^{1/2}}
 =\frac{dx_1\cdots dx_n}{\abs{x_1\cdots x_n}}
 =\frac{dq_1\cdots dq_n}{\abs{q_1\cdots q_n}}
\end{equation}
\end{theorem}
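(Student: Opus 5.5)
The plan is to prove the two equalities in \eqref{eq:typeCI-measure} separately, using the facts recorded just before the statement: the elimination $s\mapsto x$ has Jacobian one, and on the big cell $\abs{D_H^G(\iota(S(s)))}^{1/2}=C'\abs{P_n(s)}$. Since $P_n=x_1x_2\cdots x_n$ (a telescoping product of the pivots $x_i=P_i/P_{i-1}$), the first equality follows at once:
\[
 \frac{ds_1\cdots ds_n}{\abs{D_H^G}^{1/2}}
 =\frac{1}{C'}\,\frac{dx_1\cdots dx_n}{\abs{x_1\cdots x_n}}.
\]
Here the constant $C'$ is absorbed into the phrase "compatible Haar measures".

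For the second equality, we pass to logarithmic coordinates. The relations $x_1=q_1$, $x_2=q_2$, $x_i=q_ix_{i-2}$ define a monomial change of variables on $\Gm^n$. Explicitly, $x_i=\prod_{j\equiv i\ (2),\,j\leq i}q_j$. Its exponent matrix is unitriangular, hence lies in $\GL_n(\mathbb Z)$ with determinant $1$. A monomial automorphism of the torus with unimodular exponent matrix preserves the form $\bigwedge_i dx_i/x_i$ up to sign, since $d\log x_i=\sum_j m_{ij}\,d\log q_j$ and $\det(m_{ij})=1$. Therefore
\[
 \prod_i\frac{dx_i}{\abs{x_i}}=\prod_i\frac{dq_i}{\abs{q_i}}.
\]

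The step that needs real justification is the input $\abs{D_H^G}^{1/2}=C'\abs{P_n}$; I would verify it rather than treat it as given. The argument runs as follows.

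\begin{enumerate}[label=\textup{(\roman*)},leftmargin=2.3em]
\item For $\h\simeq\glie_n$ inside $\splie_{2n}$, the roots of $\g$ not in $\h$ are $\pm(h_i+h_j)$ for $i\leq j$. Hence on the common regular-semisimple locus, $D^G/D^H=\pm\prod_{i\leq j}(h_i+h_j)^2$.
\item The product $\prod_{i\leq j}(h_i+h_j)$ equals $2^n\prod_i h_i\prod_{i<j}(h_i+h_j)$.
\item By Orlando's formula, $\prod_{i<j}(h_i+h_j)$ is, up to sign, the Hurwitz determinant $P_{n-1}$. Since $P_n=s_nP_{n-1}$ and $s_n=\pm\prod_ih_i$, this gives $\prod_{i\leq j}(h_i+h_j)=\pm2^nP_n$.
\item Both sides are polynomials in $s$, so the identity extends from the regular-semisimple locus to the whole big cell as a polynomial identity. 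This extension is how the paper defines $D_H^G$ there.
\end{enumerate}

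The Jacobian-one claim for $s\mapsto x$ I would take from the proof of \cref{thm:typeCI-chart}: the elimination steps are shears with unipotent derivative.

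I expect the main obstacle to be exactly this Jacobian-one claim. It must be interpreted correctly, because as a literal map $s\mapsto x$ it is not a shear in the naive sense. If it proves delicate, the fallback is an alternative route to the whole identity via \cref{thm:jacobi-log-density}. One identifies the tridiagonal $Q(q)$ with the type-$C$ Jacobi family. For $C_n$, $\kappa$ has some explicit exponents, and the $z_\alpha$-density gives $\prod dq_\alpha/\abs{q_\alpha}$ directly after $q=z^2$ and the monomial rescaling of simple coordinates.

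Along that route, the remaining check is that the Hurwitz $q_i$ agree with the Jacobi $q_\alpha$ up to a unimodular monomial change and constants. The absolute values of these constants are absorbed by the Haar normalization.
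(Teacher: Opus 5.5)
Your proposal is correct and follows the paper's proof. The first equality comes from the unit Jacobian of $s\mapsto x$ together with $P_n=x_1\cdots x_n$, and the second from the unimodular triangular monomial change $x_i=q_ix_{i-2}$. The Jacobian-one step you flag does hold: the elimination is a composite of Routh steps, each a shear whose added term depends only on coordinates it leaves unchanged.

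Your verification of $\abs{D_H^G}^{1/2}=C'\abs{P_n}$ via the roots $\pm(h_i+h_j)$, Orlando's formula, and $P_n=s_nP_{n-1}$ is also correct. It supplies a step the paper only asserts, so the fallback through the general Jacobi density theorem is not needed.
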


\begin{proof}
After absorbing the fixed constants into the compatible measure
normalizations, the first equality follows from the unit Jacobian and
$P_n=x_1\cdots x_n$.  The recurrence $x_i=q_ix_{i-2}$ is a monomial automorphism of the torus whose
exponent matrix is triangular with determinant one.  It therefore preserves
product multiplicative Haar measure, giving the second equality.
\end{proof}

Thus the Hurwitz big cell is toroidal with logarithmic density; product-type
neighborhoods of its boundary become punctured polydiscs in these coordinates.
The calculation is an independent explicit type-$C$ realization of the same
logarithmic mechanism as \cref{thm:jacobi-log-density}.

\subsection{\texorpdfstring{The rank-one factorization of $u_\gamma$}{The rank-one factorization of u-gamma}}
For $\gamma\in F^\times$, set
\begin{equation}\label{eq:typeCI-D}
 D_\gamma(q)=\gamma^{-1}\diag
 \left(q_1,q_1q_2,\ldots,q_1q_2\cdots q_n\right)
 =\diag(d_1,\ldots,d_n)
\end{equation}
Define
\begin{equation}\label{eq:typeCI-u}
 u_\gamma(q)=\frac12
 \begin{pmatrix}
 I_n-D_\gamma&-D_\gamma^{-1}-I_n\\
 D_\gamma+I_n&I_n-D_\gamma^{-1}
 \end{pmatrix}
\end{equation}

Write the symplectic basis as $(v_1,\ldots,v_n,w_1,\ldots,w_n)$ and reorder it
by symplectic planes as $(v_1,w_1,\ldots,v_n,w_n)$.  In this basis,
\begin{equation}\label{eq:typeCI-block-factor}
 u_\gamma(q)
 =\bigoplus_{i=1}^n c(-d_i) 
\end{equation}
where $c(z)$ is the rank-one section \eqref{eq:cayley-section}.
The relevant roots are the pairwise strongly orthogonal long roots
$2e_1,\ldots,2e_n$ of $C_n$, so this factorization is the type-$C$ instance
of the rank-one Cartan sections of Section~5.

\begin{proposition}\label{prop:typeCI-rankone}
The matrix $u_\gamma$ is the product of the commuting root-$\SL_2$ sections
attached to $2e_1,\ldots,2e_n$.  In particular,
\begin{equation}\label{eq:typeCI-tau}
 \tau(u_\gamma(q))
 =\begin{pmatrix}-D_\gamma(q)&0\\0&-D_\gamma(q)^{-1}\end{pmatrix}
\end{equation}
\end{proposition}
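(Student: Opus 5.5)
The plan is to verify \eqref{eq:typeCI-block-factor} directly and then reduce \eqref{eq:typeCI-tau} to the rank-one computation of \cref{lem:sl2-section} together with \cref{thm:orthogonal-section}.

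\textbf{Step 1: block structure.} Since $D_\gamma$ is diagonal, every $n\times n$ block of $u_\gamma(q)$ in \eqref{eq:typeCI-u} is diagonal. Hence $u_\gamma(q)$ preserves each plane $\langle v_i,w_i\rangle$. On that plane, in the ordered basis $(v_i,w_i)$, it acts by
\[
 \frac12\begin{pmatrix}1-d_i&-d_i^{-1}-1\\ 1+d_i&1-d_i^{-1}\end{pmatrix}.
\]
Setting $z=-d_i$ in \eqref{eq:cayley-section} gives exactly this matrix, namely $c(-d_i)$. This proves \eqref{eq:typeCI-block-factor}.

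\textbf{Step 2: identification with root subgroups.} The form $J$ restricts to each plane $\langle v_i,w_i\rangle$ as the standard $2\times2$ symplectic form, and distinct planes are $J$-orthogonal. Therefore $\Sp_{2n}$ contains $\prod_i\SL_2$, acting plane by plane. The $i$-th factor is the root $\SL_2$ of the long root $2e_i$, with diagonal torus $\diag(t,t^{-1})$ on $(v_i,w_i)$ equal to the coroot $(2e_i)^\vee(t)$. The roots $2e_1,\ldots,2e_n$ are pairwise strongly orthogonal, and the factors act on disjoint planes, so they commute. Consequently $u_\gamma=\prod_i\iota_{2e_i}(c(-d_i))$, as asserted.

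\textbf{Step 3: the involution on each plane and $\tau$.} Conjugation by $\varepsilon$ swaps $v_i$ and $w_i$. On each plane it is therefore conjugation by $\left(\begin{smallmatrix}0&1\\1&0\end{smallmatrix}\right)$, which is $\theta_\gamma$ of \eqref{eq:sl2-involution-gamma} with $\gamma=1$. So $\theta\circ\iota_{2e_i}=\iota_{2e_i}\circ\theta_1$, and every $\gamma_{2e_i}$ is a square. This is the split case, and $c(z)$ is the $\delta=1$ Cayley section, with $r=(1+z)/2$ and $s=(1-z)/2$, so that $r^2-s^2=z$. \Cref{lem:sl2-section} then gives $\tau(c(z))=\diag(z,z^{-1})$. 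By \cref{thm:orthogonal-section}, $\tau(u_\gamma)=\prod_i(2e_i)^\vee(-d_i)$. This element acts by $\diag(-d_i,-d_i^{-1})$ on the plane $(v_i,w_i)$. Reordering the basis back to $(v_1,\ldots,v_n,w_1,\ldots,w_n)$ gives $\diag(-D_\gamma,-D_\gamma^{-1})$, which is \eqref{eq:typeCI-tau}.

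\textbf{Main obstacle.} The only real difficulty is bookkeeping of conventions. One must check that the coroot $(2e_i)^\vee$ is realized as $\diag(t,t^{-1})$ rather than its inverse in the ordering $(v_i,w_i)$ dictated by $J$. One must also check that the swap matches $\theta_1$ on the nose, and not $\theta_{-1}$; a sign here would change the square class. I would settle both by a direct $2\times 2$ computation, $\theta(u)^{-1}u$, on a single plane. This same computation also double-checks the sign $-d_i$ directly, independently of the lemma.
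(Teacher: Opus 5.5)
Your proposal is correct and follows the paper's proof: identify each symplectic-plane block of $u_\gamma$ as $c(-d_i)$, apply \cref{lem:sl2-section} plane by plane, and use that the long-root $\SL_2$ subgroups commute so that $\tau$ factors componentwise. Your explicit checks that $\theta$ restricts to $\theta_1$ on each plane $\langle v_i,w_i\rangle$ and that $(2e_i)^\vee(t)$ acts there as $\diag(t,t^{-1})$ fill in conventions the paper leaves implicit, and both come out as you anticipate, giving $\tau(u_\gamma(q))=\diag(-D_\gamma,-D_\gamma^{-1})$.
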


\begin{proof}
The $i$th $2\times2$ block of \eqref{eq:typeCI-u} is
\[
 \frac12
 \begin{pmatrix}
 1-d_i&-d_i^{-1}-1\\
 d_i+1&1-d_i^{-1}
 \end{pmatrix}
 =c(-d_i)
\]
Apply \cref{lem:sl2-section} in each symplectic plane.  The long-root
subgroups commute, so the Cartan map factors componentwise.
\end{proof}

The identities
\[
 \frac{d_i}{d_{i-1}}=q_i\qquad(i\geq2)
\]
show that the Hurwitz variables are successive ratios of the strongly
orthogonal coroot coordinates.  The parameter $\gamma$ supplies the common
square-class normalization invisible to those ratios.  It labels the
square-class variant of this explicit lift and the resulting rational regular
nilpotent orbit in the worked example.

\subsection{The nilpotent limit}
Let $E_{11}$ be the first matrix unit and define
\begin{equation}\label{eq:typeCI-Qgamma}
 Q_\gamma(q)=
 \begin{pmatrix}
 Q(q)+q_1E_{11}&\gamma E_{11}\\
 \gamma^{-1}q_1^2E_{11}&-Q(q)^{\trans}-q_1E_{11}
 \end{pmatrix}
\end{equation}
A direct block calculation gives
\begin{equation}\label{eq:typeCI-conjugation}
 \Ad(u_\gamma(q)^{-1})\iota(Q(q))=Q_\gamma(q)
\end{equation}
The right side is polynomial in $q$ and extends to $q=0$, exactly mirroring
the inverse-conjugated extension in \cref{thm:descent-charts}.  Its value is
\[
 e_\gamma=
 \begin{pmatrix}
 N&\gamma E_{11}\\0&-N^{\trans}
 \end{pmatrix},
 \qquad
 N=\sum_{i=2}^nE_{i,i-1}
\]
Starting from $w_n$, repeated application of $e_\gamma$ produces one Jordan
chain of length $2n$, so $e_\gamma$ is regular nilpotent in $\splie_{2n}$.
Changing $\gamma$ by a square does not change its rational orbit.

\begin{theorem}[Worked symplectic example]\label{thm:typeCI-complete}
For the illustrative symplectic pair, the explicit data
\[
 (q,Q,u_\gamma,Q_\gamma,e_\gamma)
\]
have the following simultaneous properties:
\begin{enumerate}[label=\textup{(\roman*)},leftmargin=2.3em]
\item $Q(q)$ represents the companion-section orbit over the invariant point
      $s(q)$;
\item the relative quotient density is $\prod_i\dmeas q_i$;
\item $\tau(u_\gamma(q))$ lies in a fixed maximal $\theta$-split torus;
\item $\Ad(u_\gamma(q)^{-1})\iota(Q(q))$ extends polynomially to $q=0$;
\item its value at $q=0$ is the rational regular nilpotent element
      $e_\gamma$.
\end{enumerate}
Moreover, the Cartan lift in (iii) is exactly the product of the universal
rank-one sections associated with the strongly orthogonal long roots of
$C_n$.
\end{theorem}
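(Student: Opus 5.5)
This theorem collects results already proved in this section, so the plan is to verify each item by citing the relevant earlier statement and to check carefully only the two matrix identities that the text calls "direct".

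For (i), I will cite \cref{thm:typeCI-chart}. There $Q(q)$ is cyclic with characteristic coefficients $s_k=f_k(q)$, so it is conjugate over $F$ to $S(s(q))$. Applying $\iota$ carries the companion-section orbit in $\h$ to the orbit of $\iota(Q(q))$.

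For (ii), I will quote \cref{thm:typeCI-measure}. That identifies $ds/\abs{D_H^G}^{1/2}$ with $\prod_i dq_i/\abs{q_i}$, which is $\prod_i\dmeas q_i$ after normalizing constants.

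For (iii) and the final sentence ("Moreover"), I will use \cref{prop:typeCI-rankone}. Reordering the basis by symplectic planes gives \eqref{eq:typeCI-block-factor}, and \cref{lem:sl2-section} with $\gamma=1$ computes each block's $\tau$ as $\diag(-d_i,-d_i^{-1})$. So $\tau(u_\gamma(q))$ lies in the diagonal torus $\{\diag(D,D^{-1})\}$. This torus is $\theta$-split because conjugation by $\varepsilon$ swaps the two diagonal blocks, hence inverts it. The block decomposition exhibits $u_\gamma$ directly as the product of the commuting long-root sections for $2e_1,\ldots,2e_n$, as in \cref{thm:orthogonal-section}.

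For (iv) and (v), the real content is the conjugation identity \eqref{eq:typeCI-conjugation}, which I expect to be the main obstacle, since it is a genuine computation rather than a citation. My approach is to avoid multiplying $2n\times2n$ matrices and instead verify $\iota(Q(q))\,u_\gamma=u_\gamma\,Q_\gamma$ block by block.

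Write $u_\gamma=\tfrac12\begin{pmatrix}I-D&-D^{-1}-I\\ D+I&I-D^{-1}\end{pmatrix}$. Then $\iota(Q)$ has blocks $\tfrac12(Q\mp Q^\trans)$ and $\tfrac12(Q\pm Q^\trans)$. Change basis by $\tfrac{1}{\sqrt2}\begin{pmatrix}I&I\\ -I&I\end{pmatrix}$ (formally, since only rational identities are needed). In this basis $\iota(Q)$ becomes $\diag(Q,-Q^\trans)$ up to sign conventions, and $u_\gamma$ becomes a product of simple $D$-scalings.

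The identity then reduces to two matrix relations. The first is $D^{-1}QD=Q+q_1E_{11}+(\text{terms})$, to be checked using $d_i/d_{i-1}=q_i$: the subdiagonal $1$'s become $q_i$ and the superdiagonal $-q_i$ become $-1$. The second is the analogous relation for $Q^\trans$. The only boundary correction comes from the loop $-q_1$ at position $(1,1)$ and from $d_1=\gamma^{-1}q_1$, and this produces the $E_{11}$ entries $\gamma$ and $\gamma^{-1}q_1^2$.

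Once \eqref{eq:typeCI-conjugation} holds, polynomiality in $q$ is evident from \eqref{eq:typeCI-Qgamma}. Setting $q=0$ gives $e_\gamma$. Regularity of $e_\gamma$ follows from the Jordan chain $w_n\mapsto w_{n-1}\mapsto\cdots\mapsto w_1\mapsto\gamma v_1\mapsto v_2\mapsto\cdots\mapsto v_n\mapsto0$ of length $2n$. Here $-N^\trans$ moves along the $w$'s, the entry $\gamma E_{11}$ sends $w_1$ to $\gamma v_1$, and $N$ moves along the $v$'s. A single Jordan block in $\splie_{2n}$ is regular nilpotent. Rationality holds because all entries lie in $F$.
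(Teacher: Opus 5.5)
Your route is the paper's. The theorem is assembled from \cref{thm:typeCI-chart}, \cref{thm:typeCI-measure}, \cref{prop:typeCI-rankone}, the identity \eqref{eq:typeCI-conjugation} (which the paper only calls a direct block calculation), and the Jordan chain of $e_\gamma$. Your treatment of (i)--(iii), the final sentence, and (v) is fine; the chain carries harmless signs, $w_i\mapsto -w_{i-1}$. However, your sketch of \eqref{eq:typeCI-conjugation}, the one real computation, contains two false steps. Carried out as written, it would not produce $Q_\gamma$.

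First, after the basis change that block-diagonalizes $\iota(Q)$, the matrix $u_\gamma$ is not a product of $D$-scalings. Take $P=\left(\begin{smallmatrix}I&I\\ I&-I\end{smallmatrix}\right)$, which avoids $\sqrt2$. Then $\iota(X)=P\diag(X,-X^\trans)P^{-1}$ and
\[
 P^{-1}u_\gamma=\tfrac12\begin{pmatrix}I&-D^{-1}\\ -D&-I\end{pmatrix}
 =\tfrac12\diag(I,D)\,K\,\diag(I,D^{-1}),
 \qquad K=\begin{pmatrix}I&-I\\ -I&-I\end{pmatrix},\quad K^2=2I
\]
The block-mixing constant matrix $K$ is essential. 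If $P^{-1}u_\gamma$ were block diagonal, $\Ad(u_\gamma^{-1})\iota(Q)$ would have zero off-diagonal blocks, whereas $Q_\gamma$ has $\gamma E_{11}$ and $\gamma^{-1}q_1^2E_{11}$ there. Using $K^{-1}=K/2$ one gets
\[
 \Ad(u_\gamma^{-1})\iota(Q)=\tfrac12\begin{pmatrix}X+Y&(Y-X)D^{-1}\\ D(Y-X)&D(X+Y)D^{-1}\end{pmatrix},
 \qquad X=Q,\quad Y=-D^{-1}Q^\trans D
\]

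Second, the needed relation is not $D^{-1}QD=Q+q_1E_{11}+\cdots$. That matrix has subdiagonal entries $1/q_i$ and superdiagonal entries $-q_i^2$. The entrywise effect you describe (subdiagonal $q_i$, superdiagonal $-1$) is that of $DQD^{-1}$, and it produces the transposed pattern, not $Q$ plus a boundary correction. The correct statement follows from $d_i/d_{i-1}=q_i$ and is a single relation:
\[
 D^{-1}Q^\trans D=-Q-2q_1E_{11},
 \qquad\text{equivalently}\qquad
 D(Q+q_1E_{11})D^{-1}=-(Q+q_1E_{11})^\trans
\]
Hence $Y=Q+2q_1E_{11}$, so $\tfrac12(X+Y)=Q+q_1E_{11}$ and $\tfrac12(Y-X)=q_1E_{11}$. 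Then $d_1=\gamma^{-1}q_1$ turns the off-diagonal blocks into $\gamma E_{11}$ and $\gamma^{-1}q_1^2E_{11}$. The last block is $D(Q+q_1E_{11})D^{-1}=-Q^\trans-q_1E_{11}$. With this correction, (iv) and (v) follow exactly as you say.
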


The theorem gives an independent worked realization of the general descent
mechanism proved above.

\subsection{What the chart explains in the fixed-side integral}
Substituting the Hurwitz variables into the normalized form
\eqref{eq:weighted-weyl-normalized} changes the quotient measure to
\[
 \prod_{i=1}^n\dmeas q_i
\]
The family $Q_\gamma(q)$ has a regular nilpotent endpoint, while
$\tau(u_\gamma(q))$ is diagonal with entries monomial in the successive
products $q_1\cdots q_i$.  Thus the two difficulties identified in
Section~\ref{sec:weighted-origin} are resolved simultaneously:
\begin{enumerate}[label=\textup{(\roman*)},leftmargin=2.3em]
\item the nilpotent limit is expressed by a single regular conjugated family;
\item for each fixed orbital variable, after a finite clopen refinement, every
      Cartan height entering the shifted truncation function is piecewise
      affine in the valuations of the $q_i$.
\end{enumerate}
Consequently, after a small quotient neighborhood is pulled back to the
Hurwitz chart, both its measure and the truncation function to be integrated
over it are expressed in valuation coordinates.  This is the explicit model
for the general mechanism described in
\cref{prop:toroidal-weights,eq:induced-weight-schematic}.

\end{document}